
\documentclass[reqno,11pt]{amsart}

\usepackage{amssymb}
\usepackage[all]{xy}
\usepackage{caption}
\usepackage[pdftex]{hyperref}
\usepackage{graphicx}
\usepackage{array}
\usepackage{physics}
\usepackage{color}
\usepackage{stmaryrd}
\usepackage{xypic}
\usepackage{bm}
\usepackage{comment}
\usepackage{mathtools}
\usepackage{tikz}
\usetikzlibrary{knots, hobby}
\allowdisplaybreaks

\setlength{\oddsidemargin}{-5pt}
\setlength{\evensidemargin}{-5pt}
\setlength{\textwidth}{480pt}
\setlength{\textheight}{610pt}
\setlength{\topmargin}{0pt}

\setlength{\parskip}{0.2\baselineskip}

\newtheorem{thmIntro}{Theorem}
\newtheorem{thm}{Theorem}[section]
\newtheorem{lem}[thm]{Lemma}
\newtheorem{prop}[thm]{Proposition}

\newtheorem{cor}[thm]{Corollary}
\theoremstyle{definition}
\newtheorem{defn}[thm]{Definition}
\newtheorem{ex}[thm]{Example}
\newtheorem{rem}[thm]{Remark}
\newtheorem*{conj*}{Conjecture}

\newcommand{\CC}{{\mathbb C}}
\newcommand{\ZZ}{{\mathbb Z}}
\newcommand{\RR}{{\mathbb R}}

\newcommand{\cT}{{\mathcal{T}}}
\newcommand{\R}{{\mathcal{R}}}
\newcommand{\FQ}{\textbf{FQ}}

\newcommand{\cA}{{\mathcal{A}}}

\newcommand{\Free}[1]{\mathbf{F}_{#1}}
\newcommand{\SVB}{\textbf{\textup{SVectB}}}

\newcommand{\Qdl}{\textbf{\textup{Qdl}}}

\newcommand{\AQ}[1]{{\textup{AQ}_{#1}}}

\newcommand{\AGL}[1]{{\textup{AGL}_{#1}}}

\newcommand{\aglC}{\AGL{1}(\mathbb{C})}
\newcommand{\agl}{\AGL{1}}

\DeclareMathOperator{\id}{Id}
\DeclareMathOperator{\Id}{Id}

\DeclareMathOperator{\Hom}{Hom\,}           

\DeclareMathOperator{\Gr}{Gr}
    
\DeclareMathOperator{\diag}{diag}

\DeclareMathOperator{\Conj}{Conj}
\DeclareMathOperator{\Span}{Span}
\DeclareMathOperator{\Vect}{\mathbf{Vect}}
\DeclareMathOperator{\As}{As}
\DeclareMathOperator{\Eq}{Eq}
\DeclareMathOperator{\sign}{sgn}
\DeclareMathOperator{\coevmap}{coev}
\DeclareMathOperator{\evalmap}{ev}


\newcommand{\capright}{
  \begin{tikzpicture}[scale=0.35,baseline=0.5ex]
    \coordinate (base) at (0,0);
    \draw[thick,->]
      (0,0.1) .. controls (0.1,1.3) and (1,1.3) .. (1,0.1);
  \end{tikzpicture}
}
\newcommand{\capleft}{
  \begin{tikzpicture}[scale=0.35,baseline=0.5ex]
    \coordinate (base) at (0,0);
    \draw[thick,<-]
      (0,0.1) .. controls (0.1,1.3) and (1,1.3) .. (1,0.1);
  \end{tikzpicture}
}

\newcommand{\cupleft}{%
  \begin{tikzpicture}[scale=0.35,baseline=0.5ex]
    \draw[thick,<-]
      (0,1) .. controls (0,-0.2) and (1,-0.2) .. (1,1);
  \end{tikzpicture}%
}

\newcommand{\cupright}{%
  \begin{tikzpicture}[scale=0.35,baseline=0.5ex]
    \draw[thick,->]
      (0,1) .. controls (0,-0.2) and (1,-0.2) .. (1,1);
  \end{tikzpicture}%
}

\newcommand{\crossingXminusthree}{
  \begin{tikzpicture}[scale=0.4,baseline=0.5ex]
    \coordinate (base) at (0,-0.5);
    \draw[thick,->] (0,1) -- (1,0);
    \draw[white,line width=4pt] (0,0) -- (1,1);
    \draw[thick,->] (0,0) -- (1,1);
  \end{tikzpicture}
}

\newcommand{\crossingXplus}{
  \begin{tikzpicture}[scale=0.4,baseline=0.5ex]
    \coordinate (base) at (0,-0.5);
    \draw[thick,->] (1,0) -- (0,1);
    \draw[white,line width=4pt] (0,0) -- (1,1);
    \draw[thick,->] (0,0) -- (1,1);
  \end{tikzpicture}
}

\newcommand{\crossingXminusone}{
  \begin{tikzpicture}[scale=0.4,baseline=0.5ex]
    \coordinate (base) at (0,-0.5);
    \draw[thick,->] (1,0) -- (0,1);
    \draw[white,line width=4pt] (0,0) -- (1,1);
    \draw[thick,->] (1,1) -- (0,0);
  \end{tikzpicture}
}

\newcommand{\crossingXplustwo}{
  \begin{tikzpicture}[scale=0.4,baseline=0.5ex]
    \coordinate (base) at (0,-0.5); 
    \draw[thick,->] (0,1) -- (1,0);
    \draw[white,line width=4pt] (0,0) -- (1,1);
    \draw[thick,->] (1,1) -- (0,0);
  \end{tikzpicture}
}

\newcommand{\crossingXminus}{
  \begin{tikzpicture}[scale=0.4,baseline=0.5ex]
    \coordinate (base) at (0,-0.5);
    \draw[thick,->] (0,0) -- (1,1);
    \draw[white,line width=4pt] (0,1) -- (1,0);
    \draw[thick,->] (1,0) -- (0,1);
  \end{tikzpicture}
}

\newcommand{\crossingXplusthree}{
  \begin{tikzpicture}[scale=0.4,baseline=0.5ex]
    \coordinate (base) at (0,-0.5); 
    \draw[thick,->] (0,0) -- (1,1);
    \draw[white,line width=4pt] (0,1) -- (1,0);
    \draw[thick,->] (0,1) -- (1,0);
  \end{tikzpicture}
}

\newcommand{\crossingXminustwo}{
  \begin{tikzpicture}[scale=0.4,baseline=0.5ex]
    \coordinate (base) at (0,-0.5);
    \draw[thick,->] (1,1) -- (0,0);
    \draw[white,line width=4pt] (0,1) -- (1,0);
    \draw[thick,->] (0,1) -- (1,0);
  \end{tikzpicture}
}

\newcommand{\crossingXplusone}{
  \begin{tikzpicture}[scale=0.4,baseline=0.5ex]
    \coordinate (base) at (0,-0.5); 
    \draw[thick,->] (1,1) -- (0,0);
    \draw[white,line width=4pt] (0,1) -- (1,0);
    \draw[thick,->] (1,0) -- (0,1);
  \end{tikzpicture}
}

\subjclass[2020]{Primary: 57K14. 
Secondary: 14M35, 
57K12, 
18M20. 
}
\keywords{Alexander polynomial, representation variety, Topological Quantum Field Theory, knot quandle, rational knot, pretzel knot.}

\title{Affine representations of rational and pretzel knots}

\author{Javier Mart\'{\i}nez}
\address{Departamento de Matem\'atica Aplicada a la Ingeniería Industrial, Escuela Técnica Superior de Ingeniería y Diseño Industrial, Universidad Politécnica de Madrid, Ronda de Valencia 3, 28012 Madrid Spain.}
\email{javier.martinezmartinez@upm.es}

\begin{document}

\maketitle

\begin{abstract}
We construct and study representations of rational and pretzel tangle and knot groups into the affine group $\mathrm{AGL}(1,\mathbb{C})$, via a TQFT that is valued in the category of spans of singular vector bundles over $\mathbb{C}^{\ast}$. For these families, we derive closed-form expressions for their Alexander polynomials and establish bounds on their zeros. Finally, we specialize the functor at $t=-1$ and analyze colorings of rational tangles in terms of spans of complex vector spaces.
\end{abstract}

\section{Introduction}\label{sec:introduction}

The Alexander polynomial, introduced nearly a century ago, continues to be a fundamental knot invariant. Classical approaches introduce it via the Seifert matrix, using Fox calculus, or through a presentation of the knot group and its abelianization \cite{BurdeZieschangbook}, all encoding the properties of the first homology group of the infinite cyclic cover of the knot complement. In addition, several generalizations have been developed in the last decades, leading to interesting and fruitful theories \cite{Lin,surveytwisted}. Other powerful knot invariants, such as knot Floer homology \cite{knotfloer1}, recover it as a graded Euler characteristic, showing once again its importance and ubiquity.

There are several approaches to categorifying the Alexander polynomial, many of which give rise to   quantum field theories for Alexander invariants \cite{frohman1992alexander, bigelow2015alexander, florens2016functorial, cimasoni2005lagrangian, cimasoniconway}. From a representation-theoretic perspective, \cite{AGL1Alexanderinvariants} established their deep connection with complex affine representations of the knot group, a relation first observed by De Rham \cite{deRham} and Burde \cite{Burde}, and later studied from a sheaf-theoretic perspective by Hironaka \cite{hironaka1997alexander}. For any knot $K$, the space of $\aglC$-representations of the knot group $\Gamma_K$ can be interpreted as a singular vector bundle (a geometric space associated to a coherent sheaf), if one considers the regular map between representation varieties
\begin{equation} \label{eqn:Alexfibration}
\pi_K: \R_{\aglC}(\Gamma_K) \longrightarrow \R_{\CC^{\ast}}(\Gamma_K) = \CC^*
\end{equation}
induced by the projection onto the diagonal entry $\aglC\rightarrow \CC^*$. The fibre of this map over any $t\in \CC^*$ can be identified with
$$
\R_{\AQ{t}}(Q_K) = \Hom_\Qdl(Q_K, \AQ{t}),
$$
the set of quandle representations of the knot quandle $Q_K$ into the Alexander quandle $\AQ{t}$. These ideas extend naturally from knots to tangles, leading to a Topological Quantum Field Theory, developed in \cite{AGL1Alexanderinvariants}, that encodes the local and global properties of the Alexander module in terms of representations:

\begin{thmIntro}\label{thm-intro:TQFT-global}
There exists a braided monoidal functor
$$
\cA: \cT \longrightarrow \Span(\SVB),
$$
computing the Alexander module of knots.
\end{thmIntro}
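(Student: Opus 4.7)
The plan is to proceed by presenting the tangle category $\cT$ as a strict braided monoidal category via generators and relations (after Turaev), and then defining $\cA$ on generators and checking the defining relations. The generating object of $\cT$ (a marked point on the boundary) should be sent to the basic singular vector bundle over $\CC^*$ whose fiber at $t \in \CC^*$ is the representation variety of the one-strand tangle into $\AQ{t}$, i.e.\ an affine line over each point. The generating morphisms---identity, cup, cap, positive and negative crossings---are sent to spans of singular vector bundles whose apex records the representation variety of the corresponding elementary tangle complement, with the two legs induced by restricting a representation to the bottom and top boundary arcs respectively.

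First I would set up $\Span(\SVB)$ rigorously: objects are singular vector bundles over $\CC^*$, morphisms are isomorphism classes of spans with a singular vector bundle as apex and bundle maps as legs, composed via fiber product. I would verify that fiberwise product endows this with a monoidal structure and that the braiding on spans arises from the geometric braiding of boundary points. Then, for each elementary tangle $T$ with $m$ bottom and $n$ top endpoints, the apex $\R_{\aglC}(\G_T)$ (equivalently $\Hom_{\Qdl}(Q_T, \AQ{t})$ fibered over $t \in \CC^*$) admits two natural restriction maps to the boundary representation varieties, giving the required span. Functoriality under vertical composition follows from van Kampen: gluing two tangles along a boundary circle induces a pushout of fundamental quandles, which dualizes to a fiber product of representation varieties, matching span composition.

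The main obstacle will be verifying the braided monoidal relations, i.e.\ checking that the generating spans satisfy the tangle axioms: the snake (zig-zag) identities for cup and cap, the Reidemeister II relation expressing that positive and negative crossings are inverse spans, and most delicately the Reidemeister III / Yang--Baxter equation for the crossing span. Each reduces to an isomorphism of the two resulting fiber products of representation varieties, which I would establish by observing that both sides arise as the representation variety of the same tangle complement, reduced to the quandle identity $a \triangleright (b \triangleright c) = (a \triangleright b) \triangleright (a \triangleright c)$ in $\AQ{t}$. Framing independence requires an extra check of Reidemeister I (or its weakening in the framed setting).

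Finally, to see that $\cA$ computes the Alexander module on knots, I would observe that a knot $K$ viewed as a morphism from the empty tangle to itself is sent by $\cA$ to a span over the point, whose apex is $\R_{\aglC}(\G_K)$ fibered over $\CC^*$ via $\pi_K$ from \eqref{eqn:Alexfibration}. The associated coherent sheaf is precisely the Alexander module, so the knot invariant extracted from $\cA$ recovers both the Alexander module as a $\CC[t^{\pm 1}]$-module and, after taking Fitting ideals, the Alexander polynomial.
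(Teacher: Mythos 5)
First, a caveat about the target of comparison: the paper does not prove Theorem \ref{thm-intro:TQFT-global} itself --- it is imported from \cite{AGL1Alexanderinvariants}, and Section \ref{sec:agl1repsandAlexanderinvariants} only reviews the construction. Your outline follows the same route as that construction: a generators-and-relations presentation of $\cT$, the span of representation varieties $\R_{\aglC}(\Gamma_\epsilon) \leftarrow \R_{\aglC}(\Gamma_T) \rightarrow \R_{\aglC}(\Gamma_\varepsilon)$ of \eqref{eqn:spanAGL1repvarieties} as the image of a tangle, Seifert--van Kampen converting vertical gluing of tangles into fiber products of representation varieties, and the Reidemeister relations reducing to the axioms of the quandle $\AQ{t}$. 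So the strategy is the intended one.

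As a proof, however, three points need repair. First, Reidemeister I is not a framing issue here: $\cT$ is the category of oriented, unframed tangles, so R1 must hold on the nose, and it does precisely because of the idempotence axiom (P1), $x\triangleright x=x$; your hedge about ``the framed setting'' suggests you have not located where this axiom is used, whereas you correctly match R3 to (P3). Second, before any relation-checking one must show that $\Span(\SVB)$ is a category at all, i.e.\ that singular vector bundles over $\CC^*$ are closed under the fiber products used in span composition; this is exactly where the coherent-sheaf formulation earns its keep, since the fibers of $\R_{\aglC}(\Gamma_T)\to\CC^*$ are cut out by the linear relations \eqref{eqn:Alexquandlerel} and jump in dimension, and your plan only mentions verifying the monoidal structure, not the composition. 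Third, the step ``the associated coherent sheaf is precisely the Alexander module'' is the actual content of the phrase ``computing the Alexander module'' and cannot be dispatched by observing that the apex is $\R_{\aglC}(\Gamma_K)$ fibered by $\pi_K$ as in \eqref{eqn:Alexfibration}; it requires the De Rham--Burde identification of $\aglC$-representations lying over $t$ with crossed homomorphisms valued in $\CC_t$, hence with the Alexander module specialized at $t$, which is the nontrivial input from \cite{deRham,Burde,AGL1Alexanderinvariants}.
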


\begin{thmIntro}\label{thm-intro:TQFT-fibre}
For any $t\in \CC^{\ast}$, there exist a braided monoidal functor $\cA_t$,
$$
\cA_t: \cT\longrightarrow\Span(\Vect_{\CC}),
$$
which computes the quandle $\AQ{t}$-representation variety.
\end{thmIntro}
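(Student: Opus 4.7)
The plan is to obtain $\cA_t$ by specializing the global functor $\cA$ of Theorem \ref{thm-intro:TQFT-global} at the point $t\in \CC^{\ast}$, exploiting the fact that the fibre of the map $\pi_K$ in \eqref{eqn:Alexfibration} already coincides with the quandle representation variety $\R_{\AQ{t}}(Q_K)$. Concretely, I would construct a \emph{fibre at $t$} functor
$$
(-)_t : \SVB \longrightarrow \Vect_{\CC},
$$
sending a singular vector bundle $\mathcal{E}\to \CC^{\ast}$, associated to a coherent sheaf $\mathscr{F}$, to the fibre $\mathscr{F}\otimes_{\CC[t^{\pm1}]}\CC_t$, i.e.\ the complex vector space sitting above the point $t$. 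Morphisms of singular vector bundles are, by definition, morphisms of the underlying coherent sheaves, so the assignment is manifestly functorial.

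Next I would promote $(-)_t$ to a functor on span categories
$$
\Span((-)_t) : \Span(\SVB) \longrightarrow \Span(\Vect_{\CC}),
$$
by applying $(-)_t$ object-wise and leg-wise to each span $A\leftarrow C\rightarrow B$. For this to be well-defined one needs the fibre operation to preserve the pullbacks used to compose spans in $\Span(\SVB)$; this reduces to the fact that fibre at a closed point is an exact functor on the categories of coherent sheaves involved (the relevant tangle-composition pullbacks are cartesian squares of affine schemes over $\CC^{\ast}$), so base change to $\Spec\CC\xrightarrow{t}\Spec\CC[t^{\pm1}]$ commutes with the pullback in spans up to canonical isomorphism. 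Setting $\cA_t := \Span((-)_t)\circ \cA$ then yields the desired functor.

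It remains to verify the two structural properties. For \emph{braided monoidal}, the symmetric monoidal structure on $\Span(\SVB)$ is given by the external tensor product of sheaves over $\CC^{\ast}\times \CC^{\ast}$ (restricted along the diagonal so as to land in sheaves over a single copy of $\CC^{\ast}$), and this restriction commutes with the fibre at a single $t$ in the obvious way; the braiding on $\Span(\SVB)$ is induced from that of $\cT$ through $\cA$, so it is automatically preserved. For the \emph{representation-variety} interpretation, I would show that for a tangle $T$ viewed as a morphism in $\cT$, the object $\cA_t(T)$ is exactly $\R_{\AQ{t}}(Q_T)$: this is a direct consequence of the identification, recalled in the excerpt, of $\pi_T^{-1}(t)$ with $\Hom_\Qdl(Q_T,\AQ{t})$, together with the fact that spans of fibres compose as spans of quandle representation varieties under horizontal tangle composition.

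The main obstacle I anticipate is technical rather than conceptual: checking that the fibre functor commutes with the specific pullbacks used in $\Span(\SVB)$ requires a flatness/torsion-freeness statement for the sheaves appearing in the image of $\cA$, since a naive base change could produce $\mathrm{Tor}$ contributions at special values of $t$. If $\cA$ produces only locally free (or, more generally, tor-independent) sheaves at each tangle, the argument above goes through cleanly; otherwise one must restrict attention to the generic locus or work with derived fibres, and then show that the resulting linear maps still compute $\R_{\AQ{t}}(Q_T)$. I expect this to be handled by using the explicit local models of the singular vector bundles provided by the construction of $\cA$ in \cite{AGL1Alexanderinvariants}.
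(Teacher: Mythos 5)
This theorem is not actually proved in the present paper: it is imported from \cite{AGL1Alexanderinvariants}, and Section \ref{sec:agl1repsandAlexanderinvariants} only reviews the construction, ``for full details, see \cite{AGL1Alexanderinvariants}.'' The route sketched there is more direct than yours: $\cA_t$ is defined on a tangle $T$ as the span of quandle representation varieties $\R_{\AQ{t}}(Q_\epsilon) \leftarrow \R_{\AQ{t}}(Q_T) \rightarrow \R_{\AQ{t}}(Q_\varepsilon)$, obtained from \eqref{eqn:spanAGL1repvarieties} by fixing $t$ and applying the adjunction \eqref{eqn:adjunctiongrqdl}; these are linear subspaces of affine spaces, so the spans land in $\Vect_{\CC}$ with no sheaf-theoretic intermediary. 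Your plan of factoring through a fibre functor on $\Span(\SVB)$ is compatible with this in spirit, but two of your specific choices would break it. First, the monoidal structure: the paper uses the \emph{direct sum} on $\Span(\Vect_{\CC})$, matching the product of representation varieties (the Whitney sum of the singular vector bundles); your ``external tensor product of sheaves restricted to the diagonal'' has fibre $\mathcal{F}_t \otimes \mathcal{G}_t$ rather than $\mathcal{F}_t \oplus \mathcal{G}_t$, so your fibre functor would not be monoidal for that structure. Second, your proposed escape routes from the base-change problem are both fatal here: the sheaves in the image of $\cA$ are never locally free for a nontrivial knot --- the Alexander polynomial is, by construction, the equation of the jumping locus of $\pi_K$ --- so ``restricting to the generic locus'' discards exactly the values of $t$ the theorem is about, and ``derived fibres'' would introduce Tor terms that change $\R_{\AQ{t}}(Q_T)$. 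Fortunately, the problem you are trying to escape is not present: composition of spans is by the geometric fibre product of representation varieties over $\CC^{\ast}$, and the set-theoretic fibre over $t$ of $Z_1\times_Y Z_2$ is tautologically $(Z_1)_t\times_{Y_t}(Z_2)_t$, with no flatness hypothesis needed.

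The remaining gap is the one you pass over in a single sentence: that ``spans of fibres compose as spans of quandle representation varieties under horizontal tangle composition,'' together with invariance under isotopy (the Reidemeister-type relations presenting $\cT$ on the generators \eqref{eqn:generatorstanglecat}), \emph{is} the content of the theorem. It requires a Seifert--van Kampen statement for tangle quandles (gluing tangles along boundary points gives a pushout of quandles, hence a fibre product after applying $\Hom_{\Qdl}(-,\AQ{t})$) and a verification of the tangle relations on the explicit spans \eqref{eqn:X+linearmap}--\eqref{eqn:X-linearmap} and \eqref{eqn:evandcoevspans}. Likewise, the braiding on the target cannot be ``induced from $\cT$ through $\cA$'' --- that is circular; it is part of the structure of the target category, and one must check that the spans assigned to $X_{\pm}$ satisfy the required coherence. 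Neither your proposal nor the overview in this paper supplies these verifications; they are carried out in \cite{AGL1Alexanderinvariants}, which is where a complete proof must be sought.
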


We give a brief overview of the main ideas of this construction in Section \ref{sec:agl1repsandAlexanderinvariants}, but let us emphasize that the target category of Theorem \ref{thm-intro:TQFT-fibre} makes possible to apply linear algebra (within the category of spans of complex vector spaces) to compute Alexander invariants of tangles, a feature already present in the subcategory of braids through the Burau representation. 

This manuscript applies Theorems \ref{thm-intro:TQFT-global} and \ref{thm-intro:TQFT-fibre} to two classical  and well-studied families in knot theory, namely rational knots and pretzel knots (and their associated tangles), in analogy with other TQFTs \cite{cimasoni2006lagrangian}. As these families are well studied, our approach recovers several classical results from a different perspective and also produces new formulas and insights.
 First, we study the $\aglC$-representations of elementary oriented 2-tangles and their compositions, which give rise to codirected and alternating braidings, and we express them in terms of quantum integers, $[n]_t=\frac{1-t^n}{1-t}$. A careful analysis of the way these tangles are glued to form rational and pretzel knots and tangles leads us to closed-form expressions for their Alexander modules. Focusing on their Alexander polynomials, we obtain:

\begin{thmIntro}
Let $K$ be a rational knot, such that $p/q=[2k_1,\ldots,2k_{2n}]$. Then
\begin{equation} \label{eqnintro:ContinuantRationalAlexformula}
\Delta_K(t)=K_{2n}([2]_{-t}k_1,-[2]_{-t^{-1}}k_2,\ldots,[2]_{-t}k_{2n-1},-[2]_{-t^{-1}}k_{2n}),
\end{equation}
where $K_{2n}(x_1,\ldots,x_{2n})$ is the continuant 
\begin{equation}
K_{2n}(x_1,\ldots, x_{2n})=\operatorname{det}\left(\begin{array}{ccccc}
x_1 & 1 & \cdots & & 0 \\
1 & x_2 & 1 & & \\
\vdots & 1 & x_3 & \ddots & \\
& & \ddots & \ddots & 1 \\
0 & & & 1 & x_{2n}
\end{array}\right).
\end{equation}
\end{thmIntro}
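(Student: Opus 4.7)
The plan is to apply the TQFT functor $\cA_t:\cT\to \Span(\Vect_\CC)$ of Theorem \ref{thm-intro:TQFT-fibre}, together with its relation to the Alexander polynomial provided by Theorem \ref{thm-intro:TQFT-global}. A rational knot with Conway parameter $[2k_1,\ldots,2k_{2n}]$ is the numerator closure of a rational tangle $T$ obtained by successively attaching $2k_i$ horizontal or vertical twists to the zero tangle, alternating the two directions. Because $\cA_t$ is monoidal, $\cA_t(T)$ is the composition of the spans assigned to each elementary twist block, and $\Delta_K(t)$ can be extracted, up to a unit of $\ZZ[t^{\pm 1}]$, as a determinant of the resulting composed span.

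The first step is to compute $\cA_t$ on the two kinds of twist blocks. The Alexander quandle operation $x\triangleleft_t y = tx+(1-t)y$ of $\AQ{t}$ imposes, at each crossing, an affine relation among the four arc colourings. Accumulating these relations over a block of $2k$ codirected (horizontal) crossings produces a span whose defining linear equation has coefficient $(1-t)k=[2]_{-t}k$; the analogous computation for a block of $2k$ alternating (vertical) crossings involves two oppositely oriented strands, so $t$ is replaced by $t^{-1}$ and a sign appears, yielding $-(1-t^{-1})k=-[2]_{-t^{-1}}k$. These are precisely the two diagonal entries that appear in \eqref{eqnintro:ContinuantRationalAlexformula}.

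Second, I would compose these spans following the continued fraction $[2k_1,\ldots,2k_{2n}]$. Introducing a fresh colour variable after each twist block and writing down the affine relations at the four boundary arcs around each block, the composite $\cA_t(T)$ is presented by a linear system whose coefficient matrix is tridiagonal: the diagonal entries are $[2]_{-t}k_1,-[2]_{-t^{-1}}k_2,\ldots,-[2]_{-t^{-1}}k_{2n}$, and the off-diagonal $1$'s come from identifying the strand endpoints shared by consecutive twist regions. The numerator closure identifies the remaining pair of external arcs and turns the system into a square one; by the Fox/Wirtinger-type presentation of the Alexander module recovered by $\cA$ (Theorem \ref{thm-intro:TQFT-global}), the determinant of this matrix equals $\Delta_K(t)$ up to a unit. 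By definition that determinant is the continuant $K_{2n}([2]_{-t}k_1,-[2]_{-t^{-1}}k_2,\ldots,-[2]_{-t^{-1}}k_{2n})$. The remaining unit ambiguity is fixed by checking a base case, for instance the $2$-bridge knot with continued fraction $[2k_1,2k_2]$, where both sides can be evaluated by hand.

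\textbf{Main obstacle.} The delicate part is the orientation bookkeeping in the second step: horizontal and vertical twist blocks sit in a rational tangle with different relative orientations of the two strands, and showing that this alternation yields exactly the swap $t\leftrightarrow t^{-1}$ together with the accompanying sign flip between consecutive diagonal entries requires a careful induction on $n$. Verifying the base case $n=1$ and then showing that attaching the next pair of twist blocks extends the tridiagonal matrix by exactly two rows and columns with the predicted entries, while preserving the off-diagonal $1$'s coming from strand identifications, is where most of the technical work is concentrated.
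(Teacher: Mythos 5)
There is a genuine gap, concentrated precisely where you defer the work. Your first step already contains an error of attribution: a block of $2k$ \emph{codirected} crossings does not produce the coefficient $(1-t)k$. By Proposition~\ref{prop:powersf+f-}, $X_\pm^{2k}$ yields the quantum integer $[2k]_{-t}$, a polynomial of degree $2k-1$, not a linear function of $k$. The linear-in-$k$ coefficients $k(1-t)$ and $-k(1-t^{-1})$ arise only for \emph{alternating} braidings (Proposition~\ref{prop:twistedtanglesformulas}), and the point of choosing the all-even expansion $[2k_1,\ldots,2k_{2n}]$ is exactly that every twist region in the resulting oriented $2$-bridge diagram is alternating; the swap $t\leftrightarrow t^{-1}$ and the sign between consecutive diagonal entries come from the alternation of the initial crossing $R(X_\pm)$ versus $R^3(X_\pm)$ from one box to the next, not from a codirected/alternating dichotomy. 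Your final diagonal entries happen to be correct, but for the wrong reason.

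The more serious gap is the assertion that the composite span ``is presented by a linear system whose coefficient matrix is tridiagonal \ldots with off-diagonal $1$'s coming from identifying the strand endpoints.'' This is the entire content of the theorem and is never justified. What the functor actually produces is a product $N=PA_{2n}A_{2n-1}\cdots A_1Q$ of $4\times4$ block matrices $A_j=\Id+k_j'H$ placed alternately on the middle and right strands of the $3$-strand braid model, compressed to a $2\times2$ matrix by the closure; this product is in no visible sense tridiagonal. The continuant emerges only after one tracks the \emph{differences} of consecutive coordinates $x_i=b_i-c_i$, $y_i=c_i-d_i$ of the partial products and proves by induction that
$$
x_{2m}=(-1)^mK_{2m-1}(k_1',\ldots,k_{2m-1}'),\qquad y_{2m}=(-1)^mK_{2m}(k_1',\ldots,k_{2m}'),
$$
using the two-step continuant identity $K_{2m+2}=(k_{2m+1}'k_{2m+2}'-1)K_{2m}-k_{2m+2}'K_{2m-1}$ derived from \eqref{eqn:recursioncontinuant}, together with the invariance of $a_i$ and $x_i+d_i$ under each block. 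That induction, plus the observation that the closure matrix $N$ is then a scalar multiple of $\left(\begin{smallmatrix}1&-1\\-1&1\end{smallmatrix}\right)$ with scalar $\pm K_{2n}$, is the proof; your proposal replaces it with the unproved claim that a tridiagonal presentation with unit off-diagonal entries falls out of the gluing. If you want to salvage your route, you must either carry out this recursion on differences of colours, or exhibit explicitly the change of variables that turns the colouring equations of the $2$-bridge diagram into a genuinely tridiagonal system and verify that the off-diagonal entries are $1$ rather than some other units, which is not automatic.
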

For pretzel knots, we derive the following formulas in the even and odd cases:

\begin{thmIntro}
Let $K=P(q_1,\ldots,q_n)$ be a pretzel knot. The Alexander polynomial $\Delta_K(t)$ equals:
\begin{itemize}
\item When $n$ and $q_i$ are odd,
$$
\Delta_K(t) =\frac{1}{2^{n-1}}\sum_{k\geq 0} (t-1)^{2k}(t+1)^{n-1-2k}\sigma_{2k}(q_1,\ldots,q_n),
$$
where $\sigma_i(x_1,\ldots,x_n)$ denotes the $i$-th elementary symmetric polynomial.
\item When $q_1$ and $n=2p$ are even and $q_i$ are odd for $i\geq 2$, 
$$
\Delta_K(t)=\left( \prod_{i=1}^{p} [-q_{2i-1}]_{-t} \prod_{i=1}^{p}[q_{2i}]_{-t}\right) \left( \sum_{i=1}^{p}\frac{1}{[q_{2i}]_{-t}}-\sum_{i=1}^{p} \frac{1}{[-q_{2i-1}]_{-t}} \right). 
$$
\item When $q_1$ is even and $n=2p+1$ and $q_i$ are odd for $i\geq 2$,
$$
\Delta_K(t)  =\left( \frac{q_1}{2}[-2]_{-t} \prod_{i=2}^{p+1} [-q_{2i-1}]_{-t} \prod_{i=1}^{p}[q_{2i}]_{-t}\right)  \left( \sum_{i=1}^{p} \frac{1}{[q_{2i}]_{-t}}-\sum_{i=2}^{p+1}\frac{1}{[-q_{2i-1}]_{-t}}+\frac{2}{q_1}\frac{1}{t[-2]_{-t}} \right).  
$$
\end{itemize}
\end{thmIntro}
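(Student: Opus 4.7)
The plan is to apply the TQFT of Theorem~\ref{thm-intro:TQFT-fibre} to a decomposition of the pretzel knot $P(q_1,\ldots,q_n)$ into its elementary twist pieces and read off the Alexander polynomial from the resulting morphism in $\Span(\Vect_\CC)$. The pretzel is the closure of $n$ twist tangles $T_{q_1},\ldots,T_{q_n}$ arranged in parallel, so in categorical terms one needs the monoidal product $\cA_t(T_{q_1})\otimes\cdots\otimes\cA_t(T_{q_n})$ composed with the cups and caps that realize the pretzel closure.

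My first step is to compute $\cA_t(T_q)$ for a single twist tangle with $q$ equally-signed crossings. Following the description announced earlier in the paper for elementary oriented $2$-tangles, where codirected and alternating braidings are expressed through quantum integers, the matrix of $\cA_t(T_q)$ takes entries of the form $[q]_{-t}$ (and $[-q]_{-t}$ when the strand is traversed with the opposite orientation), since each same-signed crossing accumulates a factor $-t$. This yields a uniform description of each twist block as a singular affine $2\times 2$ relation controlled entirely by $[q_i]_{-t}$.

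Next, I would assemble the pretzel closure. Since composition in $\Span(\Vect_\CC)$ is fibre product, gluing the twist blocks along the pretzel pattern produces a linear system whose coefficient matrix has a block structure indexed by the strands; the Alexander polynomial is then the determinant of a reduced square minor, modulo the usual $\pm t^k$ normalization. The three cases in the statement correspond to the parities of $n$ and $q_1$, which determine which endpoints are identified in the closure, the size of the reduced matrix, and whether the closure is a knot rather than a link. When all $q_i$ and $n$ are odd, the closure is symmetric in the $n$ strands, and expanding the determinant of the associated circulant-type matrix in the variables $q_1,\ldots,q_n$ produces the elementary-symmetric formula $\frac{1}{2^{n-1}}\sum_k(t-1)^{2k}(t+1)^{n-1-2k}\sigma_{2k}$. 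When $q_1$ is even, that distinguished twist breaks the symmetry: the reduced matrix gains a special row, and cofactor expansion along it produces the product $\prod[-q_{2i-1}]_{-t}\prod[q_{2i}]_{-t}$ multiplied by a sum of reciprocals, with the case $n=2p+1$ carrying the extra $\tfrac{q_1}{2}[-2]_{-t}$ contribution corresponding to the additional half-twist needed to obtain a knot.

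The main obstacle I expect is the bookkeeping of signs and orientations. Because the braidings are codirected and alternating, adjacent twist blocks contribute entries alternating between $-t$ and $-t^{-1}$, and I will need to track precisely when $[q]_{-t}$ must be replaced by $[-q]_{-t}$ (or by its $t\leftrightarrow t^{-1}$ conjugate) according to the orientation of the strand through each twist region. A second subtlety is that each of the three parity cases requires deleting a different distinguished row and column to obtain the correct reduced matrix, and verifying that the common factor divided off at the end matches the classical normalization will require checking the singular structure of the span produced by $\cA$ via Theorem~\ref{thm-intro:TQFT-global}, not only the generic fibre computed by $\cA_t$.
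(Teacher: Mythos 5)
Your overall strategy coincides with the paper's: decompose $P(q_1,\ldots,q_n)$ as the cyclic closure of a tensor product of twist $2$-tangles, apply $\cA_t$ blockwise, and read the Alexander polynomial off the resulting linear map. However, the step where you pass from ``the determinant of a reduced square minor'' to the three closed formulas is where the actual work lies, and your proposal does not contain the ideas that make it go through. The paper shows that the closed-up $n\times n$ matrix is \emph{cyclic tridiagonal} (tridiagonal with nonzero corner entries), so that a principal $(n-1)\times(n-1)$ minor is an honest tridiagonal determinant, i.e.\ a continuant $K_{n-1}$ obeying the three-term recursion $K_m=a_mK_{m-1}-l_mu_{m-1}K_{m-2}$. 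For the two even cases the decisive observation is that the block entries satisfy $a_i=l_i+u_i$; this linearizes the recursion via $L_i:=K_i-u_iK_{i-1}=\prod_{j\le i}l_j$ and turns $E_i:=K_i/L_i$ into a first-order affine recursion, and it is precisely this telescoping that produces the $\bigl(\prod[\pm q_j]_{-t}\bigr)\cdot\bigl(\sum 1/[\pm q_j]_{-t}\bigr)$ shape. A bare cofactor expansion along the ``special row'' will not exhibit that structure.

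For the odd case, the symmetry of the answer in $q_1,\ldots,q_n$ is not visible from the matrix (which is only cyclically structured, and from which a row and column have been deleted); the paper obtains the elementary-symmetric expansion by substituting $y=(t-1)/(t+1)$, renormalizing $H_i=2^i(t+1)^{-i}K_i$, and running an induction on the continuant recursion using the identities $\sigma_r^{(m)}=\sigma_r^{(m-1)}+q_m\sigma_{r-1}^{(m-1)}$. Your proposal needs some equivalent of these two mechanisms to become a proof. Two smaller corrections: the three parity cases differ in the \emph{entries} of the diagonal blocks (which braidings are codirected and which alternating, as dictated by the orientation analysis), not in which row and column are deleted --- cyclic invariance of pretzel knots lets one always delete the same index; and the justification that a single $(n-1)$-minor computes $\Delta_K$ is that the first elementary ideal of the module presented by the $n\times n$ matrix is principal, which is indeed the point where Theorem~\ref{thm-intro:TQFT-global} is invoked, as you anticipated.
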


Closed-form expressions for the Alexander polynomial of these families have previously appeared in the literature. For instance, a continuant expression for the Conway polynomial of rational knots can be obtained from the skein relation \cite{Cromwell} or via a Seifert matrix \cite{LyubichMurasugi}, and closed formulas for the polynomials of general pretzel knots have recently been derived using the skein relation \cite{BelousovAlexander}, coinciding with our results in the odd case. Particular cases have also been studied (see \cite{PretzelAlex1, PretzelAlex2}), and a connection with Fibonacci polynomials in this context was established in \cite{KoseleffPecker}. In contrast, our method, which arises from our TQFT framework, determines the full Alexander module, thus revealing additional structural features. For example, the pretzel formulas are naturally deduced from a continuant recurrence that reflects the topological construction of pretzel tangles, leading via the functor $\cA_t$ to a cyclic tridiagonal matrix that matches the cyclic nature of pretzel knots.

Using these formulas, we are able to analyze the zeros of the Alexander polynomial in these families. We recall the following conjecture:

\begin{conj*}[Hoste]
Let $K$ be an alternating knot, and let $t \in \CC$ be such that $\Delta_K(t)=0$. Then $\Re(t) > -1$.
\end{conj*}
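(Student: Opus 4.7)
The plan is to use the TQFT framework of Theorems \ref{thm-intro:TQFT-global} and \ref{thm-intro:TQFT-fibre} to reduce Hoste's conjecture to a concrete linear-algebraic positivity claim about the matrix produced by $\cA_t$ on an alternating diagram. Applying $\cA_t$ to an alternating diagram $D$ of $K$, and closing up through the cups and caps at the top and bottom, expresses $\Delta_K(t)$ as the determinant of a matrix $M_D(t)$ whose blocks, by the same elementary-tangle analysis that will yield the continuant formula for rational knots above, have the explicit form $\begin{pmatrix} 1-t & t \\ 1 & 0 \end{pmatrix}$ for positive crossings and a dual block for negative crossings.

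First I would fix an alternating diagram $D$ for $K$, decompose it using the checkerboard coloring into elementary alternating tangles, and read off $M_D(t)$ from $\cA_t(D)$. By the Menasco--Thistlethwaite flyping theorem it suffices to verify $\det M_D(t)\neq 0$ on the closed half-plane $\{\Re(t)\leq -1\}\cap\CC^{\ast}$ for any single alternating diagram of $K$; in addition, since the monoidal structure of $\cA_t$ is compatible with Murasugi sums along Conway spheres, one may further reduce to prime alternating blocks before analyzing the matrix.

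The heart of the argument is to show that on the critical line $\Re(t)=-1$ a sesquilinear form naturally associated to $M_D(t)$---such as $M_D(t)+M_D(\overline{t})^{\ast}$ after a suitable diagonal rescaling coming from the singular vector bundle structure on the source of the span---is sign-definite, ruling out zeros of $\det M_D(t)$ on that line. This would parallel the classical fact, made precise via Seifert matrices of alternating knots by Murasugi and Hartley, that the Hermitian part of $tV-V^{T}$ has controlled sign on this line; the advantage of the TQFT approach is that the alternation of crossing signs translates directly into an alternation among the blocks of $M_D(t)$ that can be tracked through the composition rule in $\Span(\Vect_{\CC})$. Once nonvanishing is established on the line, a continuity argument combined with the Alexander symmetry $\Delta_K(t)=\pm t^{n}\Delta_K(t^{-1})$ and the fact that $|\Delta_K(t)|\to\infty$ as $|t|\to\infty$ would exclude zeros in the full half-plane $\Re(t)\leq -1$.

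The main obstacle is to establish the sign-definiteness of this Hermitian form for arbitrary alternating diagrams. While the continuant and tridiagonal descriptions arising in the rational and pretzel families above make the claim transparent, in the general case the combinatorial structure of $M_D(t)$ is dictated by the Tait graph of $D$, and no block decomposition is evident a priori. Overcoming this likely requires genuinely new input---possibly a deformation argument connecting the general case to the evaluation $t=-1$, where $\cA_{-1}$ recovers classical colorings and the positivity constraints imposed by the alternating hypothesis are most transparent---followed by holomorphic continuation in $t$ to extend the sign-definiteness to a full neighborhood of the critical line.
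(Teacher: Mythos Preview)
Your proposal attempts to prove Hoste's conjecture in full generality, but this is not what the paper does---and cannot be done. The paper states the conjecture and immediately notes that it has been shown to be \emph{false} for general alternating knots (citing \cite{hostefalse, IshikawaMontesinos}); only the special case of rational knots had been established previously \cite{IshikawaHoste}. The paper then proves the conclusion $\Re(t)>-1$ (or the stronger $|t|=1$) only for specific alternating pretzel families, via the explicit quantum-integer formulas derived from $\cA_t$.

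Consequently the ``main obstacle'' you identify---establishing sign-definiteness of the Hermitian form for arbitrary alternating diagrams---is not a gap to be filled but an actual obstruction: the sesquilinear form you propose cannot be sign-definite on $\Re(t)=-1$ for every alternating diagram, because there exist alternating knots with roots of $\Delta_K$ in the half-plane $\Re(t)\le -1$. Your outline is a reasonable heuristic for why one might have believed the conjecture, and its structure does match what the paper carries out in the tractable pretzel cases (where the tridiagonal/continuant form of $M_D(t)$ makes the positivity argument go through), but as a proof of the stated conjecture it must fail.
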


The conjecture was proved by Ishikawa to be true for rational knots \cite{IshikawaHoste}, and false for general knots \cite{hostefalse, IshikawaMontesinos}, after intense research regarding the location of the zeros of the Alexander polynomial \cite{StoimenovHoste,StoimenowHoste2, HosteSurvey}. Our results provide alternative proofs of this conjecture for pretzel knots that admit an alternating diagram, providing stronger bounds on the zeros of the Alexander polynomial in some of these pretzel families. Specifically, we prove the following:

\begin{thmIntro}
Let $K$ be the even alternating pretzel knot $K(q_1,q_2,\ldots,q_n)$, where $q_1$ is even and $q_i$ are positive odd integers for $i\geq 2$. Let $t \in \CC$ be such that  $\Delta_K(t)=0$. Then:
\begin{itemize}
\item If $n$ is even, $\abs{t}=1$.
\item If $n$ is odd,  $\Re(t) > -1$.
\end{itemize}
\end{thmIntro}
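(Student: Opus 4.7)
The plan is to analyze the zeros of $\Delta_K(t)$ via the product-sum decomposition given by the previous theorem. Writing $\Delta_K(t)=P(t)\cdot S(t)$, where $P$ denotes the product and $S$ the signed sum, one handles each factor separately. For $P$: each building block $[q]_{-t}=(1-(-t)^q)/(1+t)$ (and its $-q$ analogue) has nonremovable zeros exactly at the solutions of $(-t)^q=1$, all of which lie on $|t|=1$; the apparent pole at $t=-1$ for odd $q$ cancels a factor of $1+t$ in the numerator. In the odd case, the extra factor $\tfrac{q_1}{2}[-2]_{-t}=\tfrac{q_1}{2}(t-1)/t^2$ contributes only the zero $t=1$. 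Hence the zeros of $P$ lie on $|t|=1$ in the even case and in $\Re(t)>-1$ in the odd case.

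To handle $S(t)$, I would first establish the identity
\begin{equation*}
\frac{1}{[q]_{-t}}+\frac{1}{[-q]_{-t}}=1+t,
\end{equation*}
by direct computation. In the even case this rewrites $S(t)=(1+t)\,g(t)$ with
\begin{equation*}
g(t)=\frac{1}{1-t^{q_1}}+\sum_{i=2}^n\frac{1}{1+t^{q_i}}-p.
\end{equation*}
Invoking the elementary fact that $\Re(1/(1\pm z))>1/2$ iff $|z|<1$, one obtains $\Re(g(t))>n/2-p=0$ for $|t|<1$. The symmetry $g(t^{-1})=-g(t)$, checked directly, then forces $\Re(g(t))<0$ for $|t|>1$. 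Therefore the zeros of $g$ lie on $|t|=1$, completing the even case.

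For the odd case, the identity instead yields $S(t)=(1+t)\,h(t)+\tfrac{2t}{q_1(t-1)}$ with $h(t)=\sum_{i=2}^n 1/(1+t^{q_i})-p$; the affine term shifts the dividing curve from $|t|=1$ to $\Re(t)=-1$. My plan is to substitute $u=-t$, under which $\Re(t)\leq -1\Leftrightarrow \Re(u)\geq 1$. A zero of $S(-u)$ with $u\neq\pm 1$ is then equivalent, via partial fractions applied to $2u/(u^2-1)$, to
\begin{equation*}
\widetilde h(u)=\frac{1}{q_1}\left[\frac{1}{u-1}+\frac{1}{u+1}\right],\qquad \widetilde h(u)=\sum_{i=2}^n\frac{1}{1-u^{q_i}}-p.
\end{equation*}
The same real-part inequality as before gives $\Re(\widetilde h(u))<0$ whenever $|u|>1$, and this covers the region $\Re(u)\geq 1$, $u\neq 1$. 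On the other hand, both $\Re(1/(u-1))\geq 0$ and $\Re(1/(u+1))>0$ on this region, so the right-hand side has strictly positive real part. Hence no zero of $S$ exists in $\Re(u)\geq 1$ with $u\neq 1$. The remaining point $u=1$ (i.e., $t=-1$) is addressed by evaluating the removable singularity, yielding $S(-1)=\sum_{i=1}^n 1/q_i>0$.

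The main obstacle I anticipate is cosmetic: carefully verifying the pivot identity and ensuring the removable singularities at $t=-1$ are correctly treated in both cases, together with sign conventions when $q_1<0$. Once the decomposition is in place, the real-part inequalities over the appropriate regions (the disk $|t|<1$ in the even case, the half-plane $\Re(u)\geq 1$ in the odd case) drop out from standard estimates.
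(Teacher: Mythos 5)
Your proposal is correct and follows essentially the same route as the paper: after the product--sum factorization, the product factor is dismissed because its zeros are roots of unity (plus $t=1$ in the odd case), and the sum factor is reduced, via the substitution $z=-t$, to the vanishing of $p+\sum_i \frac{1}{z^{q_i}-1}$ (plus the $\frac{1}{q_1}\bigl(\frac{1}{z-1}+\frac{1}{z+1}\bigr)$ term when $n$ is odd), which is excluded by the same elementary real-part inequality for $\frac{1}{z-1}$ on $\abs{z}>1$ that the paper isolates as a lemma; your pivot identity and the antisymmetry $g(t^{-1})=-g(t)$ are just a repackaging of the paper's use of $\Delta_K(t)\doteq\Delta_K(t^{-1})$. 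If anything, your treatment is slightly more careful on the boundary cases ($\Re(u)=1$ and the removable singularity at $t=-1$), which the paper's proof glosses over.
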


\begin{thmIntro}
Let $K$ be the odd alternating pretzel knot $K(q_1,\ldots,q_n)$, where $n$ and $q_i$ are positive odd integers. Let $t \in \CC$ be such that $\Delta_K(t)=0$. Then $\abs{t}=1$.
\end{thmIntro}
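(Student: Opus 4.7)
The plan is to reduce the statement to a real-rootedness question in a new variable through the Möbius transformation $u = i(t-1)/(t+1)$, which maps the unit circle $|t|=1$ minus the point $t=-1$ bijectively onto $\mathbb{R}$. Using the identity $(t-1)^{2k}(t+1)^{n-1-2k}=(-1)^k u^{2k}(t+1)^{n-1}$, the formula from the previous theorem rewrites as
\begin{equation}
\Delta_K(t)=\frac{(t+1)^{n-1}}{2^n}\bigl(P(u)+P(-u)\bigr),
\end{equation}
where $P(u):=\prod_{j=1}^n(1+i q_j u)=\sum_k i^k u^k \sigma_k(q_1,\ldots,q_n)$, since only the even powers of $u$ in $P(u)$ survive in the symmetrized sum. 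A direct evaluation gives $\Delta_K(-1)=\sigma_{n-1}(q_1,\ldots,q_n)>0$, so $t=-1$ is not a root and restricting to $t\neq -1$ loses no zeros.

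For real $u$ we have $\overline{P(u)}=P(-u)$, so $P(u)+P(-u)=2\,\Re P(u)$. Writing $1+i q_j u$ in polar form yields
\begin{equation}
\Re P(u)=\Bigl(\prod_{j=1}^{n}\sqrt{1+q_j^2 u^2}\Bigr)\cos f(u),\qquad f(u):=\sum_{j=1}^n \arctan(q_j u).
\end{equation}
Since the product factor never vanishes, the real roots of $P(u)+P(-u)$ are precisely the real solutions of $f(u)\in \tfrac{\pi}{2}+\pi\mathbb{Z}$.

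Since $q_j>0$, the derivative $f'(u)=\sum_j q_j/(1+q_j^2 u^2)$ is strictly positive, so $f$ is a strictly increasing bijection $\mathbb{R}\to(-n\pi/2,n\pi/2)$. For $n$ odd, the integers $k$ with $\tfrac{\pi}{2}+\pi k\in(-n\pi/2,n\pi/2)$ form the set $\{-(n-1)/2,\ldots,(n-3)/2\}$ of size $n-1$, producing exactly $n-1$ distinct real preimages $u_k$. Each is a simple zero because $f'(u_k)\neq 0$ and $\sin(f(u_k))=\pm 1$. The coefficient of $u^{n-1}$ in $P(u)+P(-u)$ equals $2i^{n-1}\sigma_{n-1}(q_1,\ldots,q_n)$, which is nonzero since $n$ is odd and all $q_j$ are positive, so $P(u)+P(-u)$ has degree exactly $n-1$ in $u$ and these solutions exhaust its zeros.

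Inverting via $t=-(u+i)/(u-i)$ sends $\mathbb{R}$ onto $\{|t|=1\}\setminus\{-1\}$, so the $n-1$ real roots $u_k$ correspond to $n-1$ distinct unit-modulus roots $t_k$ of $\Delta_K$, which has degree $n-1$ and is therefore completely accounted for. The main obstacle I anticipate is the bookkeeping that ensures the Möbius substitution is lossless and that the leading coefficient of $P(u)+P(-u)$ does not drop; both are resolved respectively by the nonvanishing of $\Delta_K(-1)$ and by the combination of $n$ odd with the positivity of the $q_j$, so no deeper analytic input beyond monotonicity of $f$ appears to be required.
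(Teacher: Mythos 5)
Your proof is correct, but it takes a genuinely different route from the paper. The paper works directly with the tridiagonal presentation matrix $T_{n-1}$ produced by the TQFT: it decomposes $T_{n-1}=\delta N^{\mathrm{sym}}+\bigl(1+\tfrac{\delta}{2}\bigr)J$ with $N^{\mathrm{sym}}$ positive definite and $J$ skew-symmetric, and then a null vector $v$ yields $\delta\,v^{\ast}N^{\mathrm{sym}}v+\bigl(1+\tfrac{\delta}{2}\bigr)v^{\ast}Jv=0$ with $v^{\ast}N^{\mathrm{sym}}v>0$ real and $v^{\ast}Jv$ purely imaginary; separating real and imaginary parts forces $\abs{\delta+1}=1$. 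That argument needs no closed formula for $\Delta_K$. You instead start from the closed expression $\Delta_K(t)=\tfrac{1}{2^{n-1}}\sum_k(t-1)^{2k}(t+1)^{n-1-2k}\sigma_{2k}$, pass to $u=i(t-1)/(t+1)$, and show that $P(u)+P(-u)$ with $P(u)=\prod_j(1+iq_ju)$ has only real roots by counting the $n-1$ solutions of $\sum_j\arctan(q_ju)\in\tfrac{\pi}{2}+\pi\ZZ$ via strict monotonicity --- a Hermite--Biehler-type argument. All your intermediate identities check out (the substitution, $\overline{P(u)}=P(-u)$ for real $u$, the value $\Delta_K(-1)=\sigma_{n-1}>0$, and the leading coefficient $2i^{n-1}\sigma_{n-1}\neq 0$), and the final root count is not even strictly needed: once all $n-1$ roots of the degree-$(n-1)$ polynomial $P(u)+P(-u)$ are known to be real, any root $t_0\neq-1$ of $\Delta_K$ maps to one of them. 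One caveat on logical ordering: the closed formula you invoke is proved in the paper \emph{after} this theorem, but its derivation (from the continuant recurrence of Proposition \ref{prop:pretzelAlexrecurrence}) is independent of the zero-location result, so there is no circularity. What your approach buys is sharper information --- the roots are all simple, there are exactly $n-1$ of them, and they are located explicitly as level sets of $f(u)=\sum_j\arctan(q_ju)$ --- at the cost of requiring the explicit symmetric-function formula; the paper's quadratic-form argument is more robust in that it only uses the sign structure of the tridiagonal presentation.
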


Finally, we show that the specialization of the functor $\cA_t$ at $t=-1$ recovers a classical result in knot theory, namely the classification of rational tangles. This demonstrates that the TQFT developed in \cite{AGL1Alexanderinvariants} complements existing approaches in the literature, such as Conway’s original method via the Alexander-Conway polynomial \cite{conwayrational}, or the topological and arithmetic proofs via colorings due to Kauffman and Lambropoulou \cite{kauffmanrationaltangles}. Our perspective emphasizes how the span category, together with the rotation of spans induced by the rotation of 2-tangles, provides a natural framework to ``linearize’’ tangles which are not braids, taking into account the image of the embedding $\Vect_{\CC} \hookrightarrow \Span(\Vect_{\CC})$. Concretely, we establish the following:  

\begin{thmIntro}
For any rational tangle $T_{p/q}$, the specialization $\cA_{-1}(T)$ is the basic span determined by the linear map  
$$ 
f^{-q/p}:\CC^2 \longrightarrow \CC^2, \qquad f^{-q/p}=\id-\tfrac{q}{p}h,  
$$  
where $h$ is the nilpotent map $h:\CC^2\longrightarrow \CC^2$ defined as $h(a_1,a_2)=(a_1-a_2,a_1-a_2)$.  
\end{thmIntro}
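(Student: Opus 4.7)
I will prove the formula by induction on the length of the continued fraction expansion of $p/q$, exploiting the fact that every rational tangle is built from an elementary starting tangle by a finite sequence of two operations: adding a twist at the right endpoints (which sends $T_{p/q}$ to $T_{(p+q)/q}$) and rotation by $\pi/2$ (which sends $T_{p/q}$ to $T_{-q/p}$). Setting $y = -q/p$, these operations induce on the exponent the fractional-linear transformations $y \mapsto y/(1-y)$ and $y \mapsto -1/y$, respectively. The base case is the direct computation of $\cA_{-1}$ on $T_\infty = T_{1/0}$ and $T_1 = T_{1/1}$: the first gives the basic span of $f^0 = \id$ since the $\AQ{-1}$-representation space of two parallel strands is $\CC^2$ with both boundary projections equal to the identity, while the quandle relation $a \ast b = -a + 2b$ at the single crossing of $T_1$ assigns bottom colours $(2a_1 - a_2, a_1)$ to top colours $(a_1, a_2)$, yielding the basic span of $f^{-1} = \id + h$.

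\textbf{Inductive step.} I must show that if $\cA_{-1}(T)$ is the basic span of $f^y$, then adding a twist to $T$ produces the basic span of $f^{y/(1-y)}$, and rotating $T$ produces the basic span of $f^{-1/y}$. For the twist, the operation can be computed as a pullback in $\Span(\Vect_{\CC})$ between the basic span of $f^y$ and the basic span of $f^{-1}$, glued through the cup and cap morphisms implementing the local twist of the right endpoints. The nilpotency $h^2 = 0$ makes the composite manifestly basic, and a direct calculation verifies the exponent $y/(1-y)$. The rotation is substantially more delicate: one pre- and post-composes the basic span of $f^y$ with the cup and cap morphisms of the functor $\cA_{-1}$ that realise the rotation of tangles in $\cT$, producing an \emph{a priori} non-basic span. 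Reducing this span to basic form uses the rank-one nilpotent structure of $h$, and in particular the coincidence $\img h = \ker h$, which permits a change of basis compatible with the rotation and yields the basic span of $f^{-1/y}$.

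\textbf{Induction and main obstacle.} With these two transformation rules, the induction is straightforward: the continued fraction expansion of $p/q$ prescribes a sequence of twists and rotations building $T_{p/q}$ from $T_\infty$, and the composition of the associated fractional-linear maps on the exponent recovers $-q/p$; the degenerate starting value at $T_0$ (exponent $\infty$) is handled by passing to projective coordinates or by taking the base case at $T_1$ instead. The main obstacle is the rotation step: whereas vertical composition of basic spans is essentially just composition of linear maps, rotation by $\pi/2$ intertwines domain and codomain through the cup and cap morphisms, and recognising the rotated span as the basic span of $f^{-1/y}$ genuinely depends on the special rank-one nilpotent structure of $h$.
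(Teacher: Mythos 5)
Your strategy is essentially the paper's: both arguments induct on the length of the continued fraction expansion, use $h^2=0$ to make composition act additively on the exponent of $f^s$, and use the fact that rotation sends the basic span of $f^s$ to that of $f^{-1/s}$ (Proposition \ref{prop:rotationfs}; note that the ``delicate'' reduction of the rotated span to basic form that you anticipate is handled there simply by observing that one leg of the rotated span is invertible whenever $s\neq 0$, no appeal to $\img h=\ker h$ being needed). The only organizational difference is that you add one crossing at a time and track the exponent through the M\"obius maps $y\mapsto y/(1-y)$ and $y\mapsto -1/y$, whereas the paper composes whole blocks $[X_\pm]^{a_i}$ in the canonical form of Theorem \ref{thm:continuousfractangle} and reads the answer off the continued fraction; these are equivalent, and your degeneracy worry ($y=0,\infty$ mid-induction) is resolved exactly as in the paper by using the all-positive canonical expansion, under which no intermediate fraction vanishes. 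One slip to fix before running the induction: in your base case you identify the single crossing as $T_{1/1}$ but compute its span as $(a_1,a_2)\mapsto(2a_1-a_2,a_1)=\id+h$, which with the paper's conventions ($f^s=\id+sh$, $[X_+]=[-1]$, $[X_-]=[+1]$) is $f^{+1}=\cA_{-1}(T_{-1})$, not $f^{-1}$; the tangle $T_{1/1}=X_-$ instead gives $(a_1,a_2)\mapsto(a_2,2a_2-a_1)=\id-h=f^{-1}$. The statement is consistent either way, but the labels must be matched or the signs will propagate incorrectly through the induction.
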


This leads to a reinterpretation of Kauffman--Lambropoulou’s proof of the classification of rational tangles via colorings in terms of spans and the geometry of the complex Grassmannian $\Gr(2,4)$. In particular, we prove:  

\begin{thmIntro}
The set  
\[
\cA_{-1}^{rat}:=\{\cA_{-1}(T) \mid T \text{ is a rational tangle}\}  
\]  
defines a smooth rational curve inside the complex Grassmannian $\Gr(2,4)$. Moreover, the fraction associated to any coloring $(v_1,v_2,v_3,v_4)$ of a rational tangle $T$,  
\[
\frac{p}{q}=\frac{v_4-v_3}{v_4-v_2},  
\]  
coincides with the rational slope of the point $\cA_{-1}(T)$ on $\cA_{-1}^{rat}\cong \mathbb{P}^1(\CC)$.  
\end{thmIntro}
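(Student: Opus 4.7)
The plan is to compute $\cA_{-1}(T_{p/q})$ in Plücker coordinates and then verify the slope identity by a direct linear-algebra computation.

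Starting from the previous theorem, the basic span determined by $f^{-q/p}:\CC^2\to\CC^2$ is encoded by the graph of $f^{-q/p}$, i.e.\ a two-dimensional subspace of $\CC^2\oplus\CC^2\cong\CC^4$, hence a point of $\Gr(2,4)$. First I would apply $(\id, f^{-q/p})$ to the standard basis of $\CC^2$ to obtain two row vectors spanning this graph, and then read off the six $2\times 2$ minors. After clearing denominators by $p$, a short computation shows that the Plücker coordinates equal
$$
(p_{12}:p_{13}:p_{14}:p_{23}:p_{24}:p_{34})=(p:q:p+q:q-p:q:p),
$$
which is linear in $(p,q)$ and satisfies the Klein quadric relation $p_{12}p_{34}-p_{13}p_{24}+p_{14}p_{23}=p^2-q^2+(p+q)(q-p)=0$. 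Hence the assignment $(p:q)\mapsto \cA_{-1}(T_{p/q})$ extends to a linear embedding $\mathbb{P}^1(\CC)\hookrightarrow\mathbb{P}^5$ whose image lies inside $\Gr(2,4)$. Being a line in projective space, it is a smooth rational curve $\cA_{-1}^{rat}\cong\mathbb{P}^1(\CC)$, and this also handles the degenerate case $p=0$ in which $f^{-q/p}$ itself is not defined.

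For the second statement I would express the endpoint coloring in terms of $f^{-q/p}$. Adopting the convention that $(v_1,v_2)$ are the bottom (input) colors and $(v_3,v_4)=f^{-q/p}(v_1,v_2)$ the top (output) colors, the explicit form of $h$ gives
$$
v_3=v_1-\tfrac{q}{p}(v_1-v_2), \qquad v_4=v_2-\tfrac{q}{p}(v_1-v_2),
$$
so $v_4-v_3=v_2-v_1$ and $v_4-v_2=\tfrac{q}{p}(v_2-v_1)$, yielding $(v_4-v_3)/(v_4-v_2)=p/q$. Since the rational parameter on $\cA_{-1}^{rat}\cong\mathbb{P}^1(\CC)$ coming from the Plücker parametrization above is precisely $(p:q)$, this ratio is by construction the slope of $\cA_{-1}(T_{p/q})$ on the curve.

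The main obstacle I anticipate is bookkeeping of conventions: one must pin down a consistent orientation and labeling of the four tangle endpoints so that the input/output splitting used by the span $f^{-q/p}$ matches the ordering $(v_1,v_2,v_3,v_4)$ underlying the Kauffman--Lambropoulou fraction $(v_4-v_3)/(v_4-v_2)$, and check that this is coherent with the quandle relations of $\AQ{-1}$. Once these conventions are aligned, both claims reduce to the two direct computations above; smoothness and rationality of the curve are then automatic from its being a line in projective space.
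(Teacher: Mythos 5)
Your proposal is correct, and it reaches the statement by a genuinely different route from the paper. You parametrize the curve explicitly: starting from the preceding theorem you take the graph of $f^{-q/p}$ in $\CC^2\oplus\CC^2$, compute its six Pl\"ucker minors, and observe that after clearing the denominator they read $(p:q:p+q:q-p:q:p)$ --- linear in $(p:q)$ and satisfying the Klein quadric relation --- so the image is a line in $\mathbb{P}^5$ lying on $\Gr(2,4)$; the slope identity then follows by evaluating $f^{-q/p}$ on the input colors. (Your coordinates agree exactly with the paper's parametrization $[p_{12}:p_{13}]\mapsto[p_{12}:p_{13}:p_{12}+p_{13}:-p_{12}+p_{13}:p_{13}:p_{12}]$ under $p_{12}=p$, $p_{13}=q$.) The paper instead never computes the coordinates of a single tangle: it cuts the curve out of $\Gr(2,4)$ by linear conditions coming from two structural facts about colorings --- every plane $\cA_{-1}(T)$ contains the trivial coloring $(1,1,1,1)$, which places it in a Schubert variety $\sigma\cong\mathbb{P}^2$, and the diagonal sum rule imposes $p_{12}=p_{34}$, cutting $\sigma$ down to a line --- and then reads the fraction off the Pl\"ucker relations as $p_{34}/p_{24}=p_{12}/p_{13}$. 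The paper's argument explains \emph{why} the locus is linear and only establishes containment in the line (implicitly using that the slopes $p/q$ exhaust $\mathbb{Q}\cup\{\infty\}$, which is Zariski-dense in $\mathbb{P}^1$, to identify the curve); your argument makes the parametrization and the image completely explicit, handles the $[\infty]$-tangle by linear extension, and verifies the slope formula by direct substitution rather than via the Pl\"ucker identities. The convention issue you flag is real but harmless: taking $(v_1,v_2)=f(v)$ (bottom) and $(v_3,v_4)=g(v)$ (top) matches the ordering $\phi_T=(f,g)$ used in the paper, under which $\frac{v_4-v_3}{v_4-v_2}=\frac{p_{34}}{p_{24}}$ comes out as $p/q$ exactly as you compute.
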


\subsection*{Structure of the manuscript} Section \ref{sec:agl1repsandAlexanderinvariants} provides a brief overview of the construction of the functors $\cA$ and $\cA_t$, together with explicit generating sets for the relevant groups and categories, which will be needed for computations in later sections. Section \ref{sec:2-tangles} analyzes the action of $\cA_t$ on elementary tangles, with emphasis on alternating and codirected braidings, which serve as the building blocks in the decompositions of 2-bridge knots and pretzel knots. These two families are treated in Sections \ref{sec:2-bridgeknots} and \ref{sec:pretzelknots}, where explicit formulas for their Alexander polynomials are derived and their zeros are analyzed. Finally, Section \ref{sec:functorA-1} is devoted to the specialization $\cA_{-1}$ and its connection with rational tangles.

\subsection*{Acknowledgements}

The author thanks \'{A}ngel Gonz\'{a}lez Prieto and Vicente Mu\~{n}oz for many valuable discussions on this work, and Joan Porti and Pedro M. Gonz\'{a}lez Manch\'{o}n for suggestions and conversations regarding pretzel knots. 

\section{$\aglC$-representations and Alexander invariants} \label{sec:agl1repsandAlexanderinvariants} 
In this section we review the construction of the functors $\cA_t$ and $\cA$ and state some of their properties that are used throughout the paper. We begin by recalling the relevant categories, then outline the construction of these functors and, after fixing generators, carry out explicit computations. For full details, see \cite{AGL1Alexanderinvariants}.

\subsubsection*{The tangle category} Let $\cT$ be the category of tangles, whose objects are sequences of $\pm$ signs, possibly empty, which we geometrically realize as a set of evenly spaced points in $\mathbb{R}^2$ with integer coordinates. An \emph{oriented tangle} $T$ is defined as the union any finite number of disjoint polygonal arcs in $\mathbb{R}^2\times [0,1]$ such that the boundary satisfies the condition 
$$
\partial T=T \cap\left(\mathbb{R}^2 \times\{0,1\}\right)=([k] \times\{0\} \times\{0\}) \cup([\ell] \times\{0\} \times\{1\}),
$$
where $[n]=\lbrace 1, \ldots, n\rbrace$ and $[0]=\emptyset$. The isotopy classes of these oriented tangles define the morphisms of the category $\mathcal{T}$, between the \emph{source} of $T$, that is, $\partial T\cap (\mathbb{R}^2\times \lbrace 0 \rbrace)$, and the \emph{target} of $T$, $\partial T\cap (\mathbb{R}^2\times \lbrace 1 \rbrace)$. The composition of two tangles $T$ and $T'$, when defined, is the isotopy class of the tangle that is obtained by placing $T'$ on top of $T$, properly rescaled to produce a tangle in $\mathbb{R}^2\times [0,1]$. Concatenation of tangles endows the category $\cT$ with a non-symmetric braided monoidal structure. We recall that the category $\cT$ can be presented in terms of generators and relations \cite{Kasselquantumgroups}. Explicitly, it is generated by the six elementary morphisms
\begin{equation}\label{eqn:generatorstanglecat}
\cupright, \, \cupleft,\, \capright, \, \capleft, \, X_{+}, \, X_{-},
\end{equation}
subject to relations that arise from Redemeister transformations and isotopies of tangle diagrams.

\subsubsection*{Quandles} A \emph{quandle} is a set $Q$ equipped with a binary operation $\triangleright: Q\times Q \longrightarrow Q$ satisfying:
\begin{itemize}
\item[(P1)] $x\triangleright x= x$, for all $x\in Q$.
\item[(P2)] For all $x,z\in Q$, there exists a unique $y\in Q$ such that $x\triangleright y=z$.
\item[(P3)] $ x\triangleright (y\triangleright z) = (x\triangleright y)\triangleright (x\triangleright z)$, for all $x,y,z\in Q$.
\end{itemize}

A \emph{quandle homomorphism} is a map $f:Q\rightarrow Q'$ preserving the quandle operation, i.e.\ $f(x\triangleright_Q y)=f(x)\triangleright_{Q'}f(y)$ for all $x,y \in Q$, so that quandles define a category, denoted by $\Qdl$. We note that uniqueness in (P2) leads to a second operation $\triangleleft$, defined as $z \triangleleft x = y$ if and only if $x \triangleright y =z$.

We note that a quandle can be constructed from any group $G$  by defining the binary operation
$$
a \triangleright b = aba^{-1}, \quad \text{for } a,b \in G,
$$
which endows $ G$ with the structure of a quandle, known as the \emph{conjugation quandle}, and denoted $\Conj(G)$.
Conversely, for any quandle $Q$, one defines the \emph{associated group} as
$$
\As(Q) = \left\langle a \in Q \,\middle|\, a \triangleright b = aba^{-1},\ \text{for all } a,b \in Q \right\rangle,
$$
which encodes the quandle operation in group-theoretic terms.

These constructions define an adjunction between the category of groups $ \mathbf{Grp}$ and the category of quandles $\Qdl$
\begin{equation} \label{eqn:adjunctiongrqdl}
\Hom_{\mathbf{Grp}}(\As(Q), G) \cong \Hom_{\Qdl}(Q, \Conj(G)),
\end{equation}
for any group $G$ and quandle $Q$ \cite{racks}. We are interested in the following quandles:
\begin{itemize}
    \item The \emph{knot quandle} $Q_K$ of any knot $K$, introduced by Joyce and Matveev \cite{Joycequandle,Matveev}. It is a complete knot invariant up to orientation, and it can be described geometrically or algebraically, in terms of a planar projection of the knot, providing a Wirtinger-type presentation of the quandle in terms of generators and relations, so that $\As(Q_K)\cong \pi_1(S^3-K)$.
    \item Given any set $S$ and the free group $\Free{S}$ that it generates, the \emph{free quandle} on $S$ is the set $\FQ_S \subset \Free{S}$  consisting of all conjugates in $\Free{S}$ of the elements of $S$, that is, 
    $$\FQ_S=\{ w s w^{-1} | w\in \Free{S}, s\in S\}.$$ The quandle operation is defined by $a \triangleright b= aba^{-1}$, so that the free quandle is a subquandle of $\Conj(\mathbf{F}_S)$.
    \item Given any $t \in \CC^* = \CC - \{0\}$, the \emph{$t$-Alexander quandle} is $\AQ{t} = (\CC, \triangleright_t)$ with the operations
    \begin{equation} \label{eqn:Alexquandlerel}
	a\triangleright_t b=tb+(1-t)a, \qquad b \triangleleft_t a = t^{-1}b + (1-t^{-1})a, \end{equation}
for $a, b \in \CC$. This Alexander quandle is subquandle of $\Conj(\agl)$, via the quandle morphism 
$$
        \AQ{t} \to \Conj(\agl), \qquad a \mapsto \begin{pmatrix}
            1 & 0 \\ a & t
        \end{pmatrix}.
    $$
\end{itemize}

\subsubsection*{The Category $\Span(\Vect_{\CC})$}

We briefly recall the main features of the category $\Span(\Vect_{\CC})$, where $\Vect_{\CC}$ denotes the category of finite-dimensional complex vector spaces:

\begin{itemize}
    \item \textbf{Objects:} same as in $\Vect_{\CC}$; that is, finite-dimensional complex vector spaces.
    
    \item \textbf{Morphisms:} For $X, Y \in \Vect_{\CC}$, a morphism from $X$ to $Y$ is represented by a \emph{span} $(Z,f,g)$ of linear maps,
    $$
    \xymatrix{
        & Z \ar[dl]_{f} \ar[dr]^{g} & \\
        X & & Y
    }
    $$
    i.e., by linear maps $f: Z \to X$, $g: Z \to Y$. Morphisms are equivalence classes of spans: $(Z, f, g) \sim (Z', f', g')$ if there exists an isomorphism $\varphi: Z \to Z'$ such that $f = f' \circ \varphi$ and $g = g' \circ \varphi$.
    
    \item \textbf{Composition:} Given morphisms $(Z_1, f_1, g_1): X \to Y$ and $(Z_2, f_2, g_2): Y \to Z$, their composition is defined by the pullback span
    $$
    \xymatrix{
        & & Z_1 \times_Y Z_2 \ar[dl]_{\pi_1} \ar[dr]^{\pi_2} & & \\
        & Z_1 \ar[dl]_{f_1} \ar[dr]^{g_1} & & Z_2 \ar[dl]_{f_2} \ar[dr]^{g_2} & \\
        X & & Y & & Z
    }
    $$
    where $Z_1 \times_Y Z_2 = \{(z_1, z_2) \in Z_1 \times Z_2 \mid g_1(z_1) = f_2(z_2)\}$, the fibre product in $\Vect_{\CC}$. 
\end{itemize}

\begin{rem}
There is a natural fully faithful embedding $\Vect_{\CC} \hookrightarrow \Span(\Vect_{\CC})$, where a linear map $f: X \to Y$ is sent to the span
$$
\xymatrix{
    & X \ar[ld]_{\id_X} \ar[rd]^{f} & \\
    X & & Y
}
$$
These will be referred to throughtout the text as \emph{basic spans}. This embedding respects composition: the composition of two basic spans corresponds to the span of the composed linear map, as the fiber product simplifies to
$$
\xymatrix{
    & & X = X \times_Y Y \ar[dl]_{\id_X} \ar[dr]^{f} & & \\
    & X \ar[dl]_{\id_X} \ar[dr]^{f} & & Y \ar[dl]_{\id_Y} \ar[dr]^{g} & \\
    X & & Y & & Z
}
$$
which corresponds to the basic span of $g \circ f$.
\end{rem}

\begin{rem}
If $X \stackrel{f}{\longleftarrow} Z \stackrel{g}{\longrightarrow} Y$ is a span where $f$ an isomorphism, then it is uniquely equivalent to the basic span with legs $\id_X$ and $g\circ f^{-1}$, that is,
$$
\xymatrix{
  & Z \ar[ld]_{f} \ar[rd]^{g} \ar@{--{>}}[dd]^{f} & \\
  X & & Y \\
  & X \ar[lu]^{\id_X} \ar[ru]_{\,g \circ f^{-1}} &
}
$$
the equivalence induced by the isomorphism $f: Z\to X$.
\end{rem}

Finally, we point out that the category $\Vect_\CC$ is a monoidal category with the direct sum $\oplus$ of vector spaces. This monoidal structure is inherited by $\Span(\Vect_\CC)$ through direct sum of both objects and spans.

\subsection*{The functors $\cA$ and $\cA_t$} We outline now the main ideas behind the construction of the functors $\cA$, $\cA_t$, and we choose generators for our computations, following \cite{AGL1Alexanderinvariants}. 

Given any tangle $T$, we can associate to it the span of $\aglC$-representation varieties
\begin{equation} \label{eqn:spanAGL1repvarieties}
\xymatrix{
& \R_{\aglC}(\Gamma_T) \ar[dl]_{i} \ar[dr]^{j} & \\
\R_{\aglC}(\Gamma_\epsilon) & & \R_{\aglC}(\Gamma_\varepsilon)
}
\end{equation}
where $\epsilon$ and $\varepsilon$ are the boundary points $\partial T=\overline{\epsilon} \sqcup \varepsilon$, and $\Gamma_T$, $\Gamma_\epsilon$ and $\Gamma_{\varepsilon}$ are fundamental groups of the complement of the tangle in $\RR^2\times [0,1]$ and its boundary points in $\mathbb{R}^2$. The maps $i: \R_{\aglC}(\Gamma_T) \to \R_{\aglC}(\Gamma_\epsilon)$ and $j: \R_{\aglC}(\Gamma_T) \to \R_{\aglC}(\Gamma_\varepsilon)$ are maps at the level of representation spaces which are naturally induced by the inclusion of the boundaries in $T$. Besides, each of these representation varieties can be regarded as singular vector bundle, as there always is a regular morphism between representation varieties
\begin{equation}\label{eq:Alexander-fibration-intro}
\pi: \R_{\agl}(\Gamma) \longrightarrow \R_{\CC^{\ast}}(\Gamma) = \CC^*,
\end{equation}
\emph{the Alexander fibration}, induced by the projection of $\pi: \aglC \rightarrow \CC^*$ onto its diagonal entry. The maps $i$ and $j$ are compatible with their fibered structures, and the Alexander polynomial arises as the generator of the vanishing ideal of the exceptional locus of this fibration.

Fix  any $t \in \R_{\CC^{\ast}}(\Gamma_K) = \CC^*$. Considering the fibre $\pi^{-1}_K(t)$ naturally leads to the functor $\cA_t$ via its connection to quandle theory. When $t\in \CC^{\ast}$ is fixed in \eqref{eqn:spanAGL1repvarieties}, the adjunction \eqref{eqn:adjunctiongrqdl} yields a span of quandle representation varieties
\begin{equation} \label{eqn:spanAQquandles}
\xymatrix{
& \R_{\AQ{t}}(Q_T) \ar[dl]_{i} \ar[dr]^{j} & \\
\R_{\AQ{t}}(Q_\epsilon)  & & \R_{\AQ{t}}(Q_\varepsilon)
}
\end{equation}
where $Q_T, Q_{\epsilon}$ and $Q_{\varepsilon}$ are the quandles associated to each of these spaces. In other words, the inclusion of the boundaries of $T$ produces a span of quandles, which we represent onto the Alexander quandle $\AQ{t}$ \cite{cattabriga}. In particular, when $K$ is a knot, $\pi_K^{-1}(t)$ can be identified with the quandle representation variety
$$
\R_{\AQ{t}}(Q_K) = \Hom_\Qdl(Q_K, \AQ{t}),
$$
of the knot quandle.

\subsection*{$\cA_t$ on the set of generators of $\cT$} \label{sec:generatorsdescription} We describe now explicitly the functor $\cA_t$ vía its action on the generators of $\cT$ listed in \eqref{eqn:generatorstanglecat}, using a natural presentation of the associated fundamental groups as follows.

\begin{itemize}
	\item Objects. Fix an object $\epsilon\in\mathcal T$ with length equal to $n:=\abs{\epsilon}$. The complement $\epsilon^{c}$ in $\mathbb{R}^2$ has fundamental group $\Gamma_\epsilon:=\pi_1(\epsilon^{c})\cong \Free{n}$. We choose orientations of the standard generators according to the sign of the corresponding point, so that from a top view the loops wind clockwise or counterclockwise (see Figure~\ref{fig:pi1generators}).
	
\begin{figure}[h]
    \centering
    \includegraphics[width=0.5\linewidth]{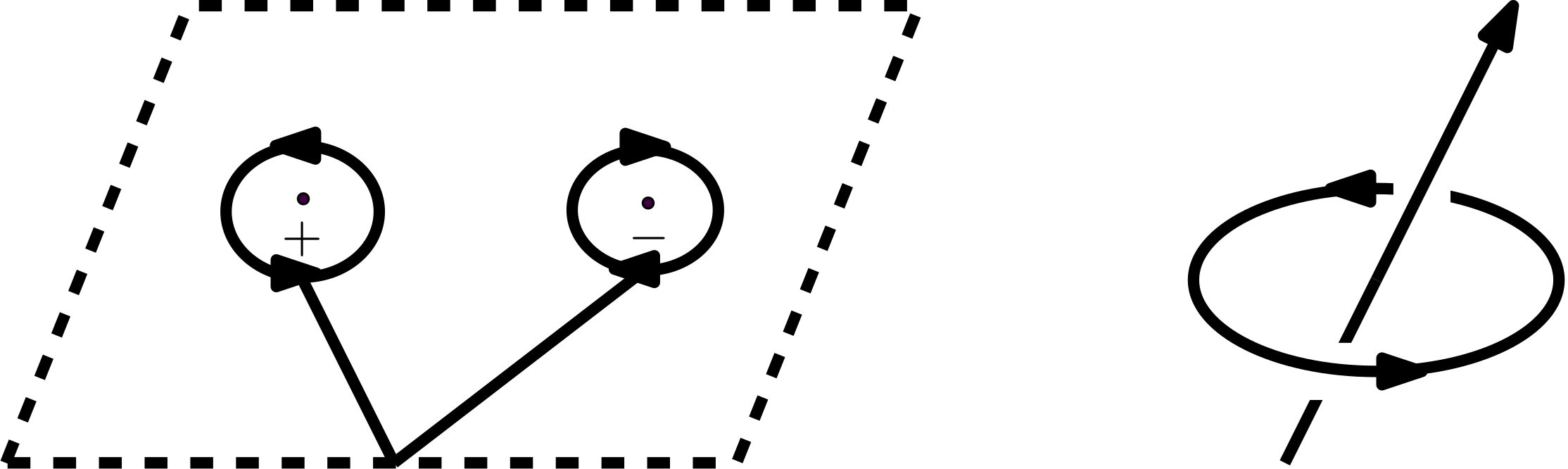}
    \caption{Generators of $\Gamma_\epsilon$ and $\Gamma_T$.}
    \label{fig:pi1generators}
\end{figure}

    The same criterion can be chosen for the quandle $Q_\epsilon$ when considering representatives of the set of homotopy classes of paths from a fixed basepoint to a neighborhood of $\epsilon$, so that we get a natural identification $\cA_t(\epsilon) = \R_{\AQ{t}}(\epsilon^c) = \CC^n$.
	
	\item Morphisms. Fix a tangle \(T:\epsilon\to\varepsilon\). The complement \(T^{c}\) has fundamental group \(\Gamma_T:=\pi_1(T^{c})\). We take as generators the meridians encircling each strand, oriented by the right--hand rule (see Figure~\ref{fig:pi1generators}). Again, this also defines generators of the quandle $Q_T$, leading to a description of $\R_{\AQ{t}}(T)$.
\end{itemize}

\subsubsection*{Generators}
With these choices on the fundamental group and the associated quandle for the six generators of $\cT$, we obtain the oriented generators depicted in Figure \ref{fig:pi1TangleGens}.
\begin{figure}[h]
    \centering
    \includegraphics[width=0.9\linewidth]{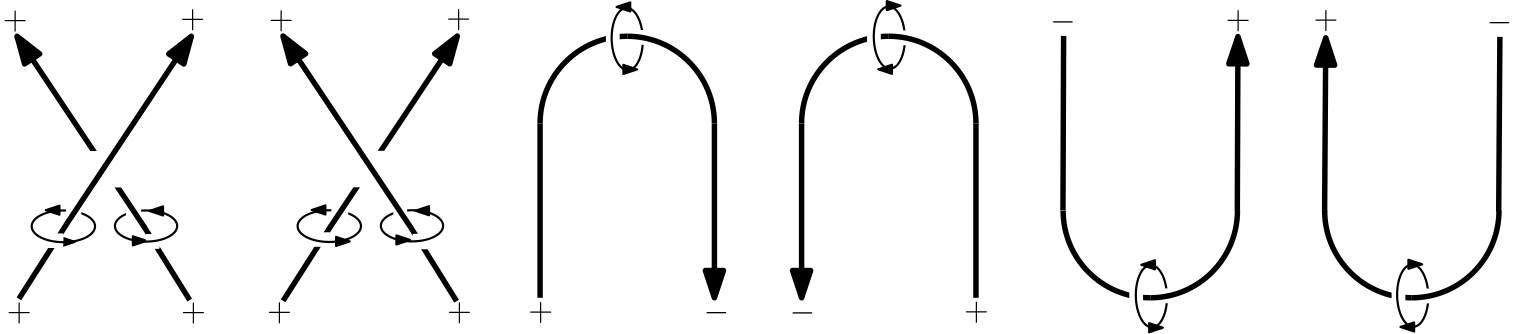}
    \caption{Orientations for the generators of the tangle category}
    \label{fig:pi1TangleGens}
\end{figure}
 It is proved in \cite{AGL1Alexanderinvariants} that the functor $\cA_t$ on these tangles produces the following spans of complex vector spaces:
\begin{itemize}
    \item Braids $X_+,X_-$: $\cA_t(X_+)$ and $\cA_t(X_-)$ are respectively given by the spans
    $$
 \xymatrix{
& \CC^2 \ar[ld]_{\id} \ar[rd]^{f_+}& \\
\CC^2 & & \CC^2
} \quad  \xymatrix{
& \CC^2 \ar[ld]_{\id} \ar[rd]^{f_-}& \\
\CC^2 & & \CC^2
}
$$
where $f_+,f_-:\CC^2\longrightarrow \CC^2$ are defined as:
\begin{align}
f_+(a_1, a_2) & =((1-t)a_1 + ta_2, a_1) \label{eqn:X+linearmap}\\
f_-(a_1, a_2) &  =(a_2, t^{-1}a_1+(1-t^{-1})a_2) \label{eqn:X-linearmap}
\end{align}

\item Evaluations and coevaluations: it is shown that $\cA_t(\capleft) = \cA_t(\capright)$ and also $\cA_t(\cupleft)=\cA_t(\cupright)$ are respectively given by the spans
$$
\xymatrix{
& \CC \ar[ld]_{\Delta} \ar[rd]& \\
\CC^2 & & 0
}\quad
\xymatrix{
& \CC \ar[rd]^{\Delta} \ar[ld]& \\
0 & & \CC^2
}
$$
where $\Delta:\CC \longrightarrow \CC^2$ is the diagonal map $\Delta(a)=(a,a)$.

We note that this last case naturally generalizes to the span attached to the evaluation and coevaluation maps in $\cT$. That is, given any object $\epsilon=(\epsilon_1,\ldots,\epsilon_n) \in \cT$ of length $n$, there is an evaluation tangle $\evalmap :\epsilon \, \otimes \epsilon^{\ast} \rightarrow \emptyset$, where $\epsilon^{\ast}=(-\epsilon_n,\ldots,-\epsilon_1)$ is the dual object. A similar construction produces the coevaluation tangle $\coevmap:\emptyset  \rightarrow\epsilon \, \otimes \epsilon^{\ast} $ The spans $\cA_t(\evalmap_{\epsilon})$  and $\cA_t(\coevmap_{\epsilon})$ are respectively equal to
\begin{equation} \label{eqn:evandcoevspans}
\xymatrix{
& \CC^n \ar[ld]_{(\id,c)} \ar[rd]& \\
\CC^{2n} & & 0
} \quad \xymatrix{
& \CC^n \ar[ld] \ar[rd]^{(\id,c)}& \\
0 & & \CC^{2n}
}
\end{equation}
where $c(z_1,\ldots,z_n)=(z_n,\ldots,z_1)$. 

The transpose $T^{\ast}$ of a tangle $T$, seen as a morphism between $\epsilon$ and $\varepsilon$, is the tangle $T^{\ast}=(\coevmap_{\epsilon}\otimes \id_{\varepsilon}) \circ (\id_{\epsilon}\otimes \, T\otimes \id_{\varepsilon}) \circ (\id_{\epsilon}\otimes \evalmap_{\varepsilon})$, where $\id_{\epsilon}=(\id)^{\otimes \abs{\epsilon}}$ is properly oriented. If $\cA_t(T)$ is the span $\CC^r \overset{f}{\longleftarrow} \CC^n \overset{g}{\longrightarrow} \CC^s$, then Equation  \eqref{eqn:evandcoevspans} implies that $\cA_t(T^{\ast})$ is the following span
\begin{equation} \label{eqn:transposespan}
\xymatrix{
& \CC^n \ar[ld]_{c \, \circ \, g} \ar[rd]^{c \, \circ \, f}& \\
\CC^s & & \CC^{r}
}
\end{equation}
\end{itemize}

\subsection{Quantum integers} Finally, we introduce notation and basic facts regarding the quantum integers $[n]_{t}$ that will be used throughout the text. Quantum integers are Laurent polynomials in $\ZZ[t,t^{-1}]$ that are defined as
$$
[n]_{t}:=\frac{1-t^n}{1-t}=t^{n-1}+\ldots+1,
$$
where $n\in \ZZ$ and we set $[0]_t=0$. We will make use of the following properties, which will be mostly used evaluated at $-t$:
\begin{itemize}
    \item [P1:]  \emph{(Recursiveness)} \begin{equation} \label{property:quantumint1} t[n]_t+1=[n+1]_{t}, \quad [1]_t=1.\end{equation}
    \item[P2:] \emph{(Negative integers)} \begin{equation} \label{property:quantumint2} [-n]_t=-\frac{1}{t}-\frac{1}{t^2}-\ldots-\frac{1}{t^n}=-t^{-n}[n]_t\end{equation}
    \item[P3:] \emph{(Evaluation at $t^{-1}$)} \begin{equation} \label{property:quantumint3} [n]_{t^{-1}}=t^{1-n}[n]_t=-t[-n]_t \end{equation}
\end{itemize}

ff\section{2-tangles} \label{sec:2-tangles}
We discuss in this section the effect of the functor $\mathcal{A}_t$ on 2-tangles, starting with the generators $X_+$ and $X_-$ of the tangle category and the braidings they generate. The analysis of these elementary tangles will allow us to study rational and pretzel tangles and links in Sections \ref{sec:2-bridgeknots} and \ref{sec:pretzelknots}.

\subsection*{Rotation and addition}
Applying a counterclockwise rotation by an angle of $\pi/2$ to  $X_+$ and $X_-$, introduced in Section \ref{sec:generatorsdescription}, yields eight elementary tangles
$$
\crossingXplus, \crossingXplusone, \crossingXplustwo, \crossingXplusthree, \crossingXminus, \crossingXminusone, \crossingXminustwo, \crossingXminusthree
$$
which differ from $X_+$ and $X_-$ only in their orientations. Together with the remaining generators $\cupright, \cupleft, \capright, \capleft$, this collection is sometimes considered as a generating set for the category of tangle diagrams;  see \cite{Kasselquantumgroups}.

The  examples above are 2-tangles: tangles with two interlacing strands and no loops, so that $\abs{b(T)}=\abs{s(T)}=2$. We index the endpoints by NE, NW, SE and SW according to their position in a planar diagram. The category of 2-tangles $\cT_2$ is special because it can be endowed with additional operations:
\begin{itemize}
    \item The \emph{sum} $T+T'$ of two tangles, which is the new tangle that is diagrammatically formed by attaching the NW and SW strands of $T$ to the NE and SE strands of $T'$, see Figure \ref{fig:tangleops}. It is defined when $SW(T)=-SE(T')$ and $NW(T)=-NE(T')$.
    \item The \emph{rotation} $T^r$ of a tangle $T$, which is the tangle that arises by performing a counter-clockwise rotation of angle $\pi/2$ on the plane.
    \item The \emph{mirror} image $\overline{T}$, obtained by switching all the crossings in $T$.
\end{itemize}

\begin{figure}[ht]
    \centering
    \begin{minipage}{0.45\textwidth}
        \centering
        \includegraphics[width=\textwidth]{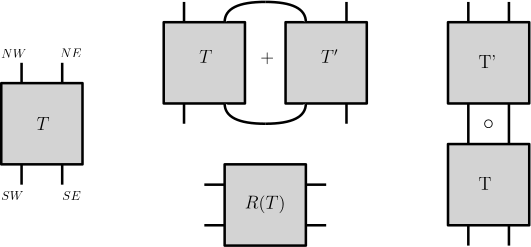}
        \caption{Operations for 2-tangles}
        \label{fig:tangleops}
    \end{minipage}
    \begin{minipage}{0.45\textwidth}
        \centering
        \includegraphics[width=0.7\textwidth]{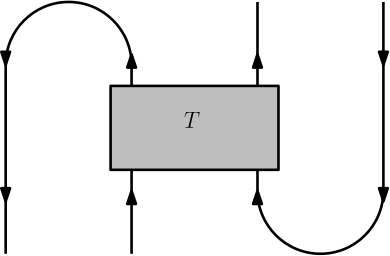}
        \caption{The tangle $R(T)$}
        \label{fig:rotationtangle}
    \end{minipage}
\end{figure}

In terms of a tangle decomposition, we can also define the rotation of a 2-tangle $T$ as follows.
\begin{defn} \label{defn:rotationtangle}
Given any 2-tangle $T$, we define the \emph{rotation} map $R:\mathcal{T}_2\longrightarrow\mathcal{T}_2$ as
$$
R(T)= (\; \cap \otimes \vert \otimes \vert \;) \circ (\; \vert \otimes  T \otimes \vert \;) \circ (\; \vert \otimes \vert \otimes \cup ),
$$
where $\vert$ stands for $\uparrow, \downarrow $ and $\cup, \cap$ also represent the tangles $\cupleft, \capleft$, depending on the orientation of $T$, so that $R(T)$ is well defined (see Figure \ref{fig:rotationtangle})
\end{defn}

The sum of two tangles $T,T'$ can be defined in terms of the rotation map as follows.
\begin{defn}
Let $T,T'$ be 2-tangles. The \emph{sum} $T+T'$ is defined as
$$
T+T'=R^{-1}(R(T')\circ R(T))
$$
whenever the composition $R(T)\circ R(T')$ is well defined.
\end{defn}

The following lemma, which we state without proof, summarizes the effect of rotating a 2-tangle $T$ at the level of spans. The result can be obtained by directly applying the functor $\cA_t$ to the tangle $R(T)$:
\begin{lem} \label{lem:rotation2tangle}
Let $T$ be a 2-tangle. Assume that $\cA_t(T)$ is the span
\[
\xymatrix{
& \CC^2 \ar[ld]_f \ar[rd]^g& \\
\CC^2 & & \CC^2
}
\]
where $f=(f_1,f_2)$, $g=(g_1,g_2)$. Then $\cA_t(R(T))$ is the span
\begin{equation} \label{eqn:rotationofspan}
\xymatrix{
& \CC^2 \ar[ld]_{(g_1,f_1)}\ar[rd]^{(g_2,f_2)}& \\
\CC^2 & & \CC^2
}
\end{equation}
\end{lem}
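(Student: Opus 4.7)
The plan is to apply the functor $\cA_t$ directly to the explicit decomposition of Definition~\ref{defn:rotationtangle}, namely $R(T) = (\cap\otimes|\otimes|)\circ(|\otimes T\otimes|)\circ(|\otimes|\otimes\cup)$, and exploit the fact that $\cA_t$ is a braided monoidal functor. The computation then reduces to writing down $\cA_t$ on each of the three tensor products and composing the resulting spans via pullback in $\Span(\Vect_\CC)$.

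First I would write each factor using the values of $\cA_t$ on the generators recalled in Section~\ref{sec:generatorsdescription}. The tangle $|\otimes|\otimes\cup$ yields the span $\CC^2\leftarrow\CC^3\to\CC^4$ whose legs send $(a_1,a_2,b)$ to $(a_1,a_2)$ on the left and to $(a_1,a_2,b,b)$ on the right, where the duplication records the diagonal map $\Delta$ of the coevaluation. The tangle $|\otimes T\otimes|$ gives $\CC^4\leftarrow\CC^4\to\CC^4$ with legs $(a,v_1,v_2,b)\mapsto(a,f_1(v_1,v_2),f_2(v_1,v_2),b)$ and $(a,v_1,v_2,b)\mapsto(a,g_1(v_1,v_2),g_2(v_1,v_2),b)$. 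Finally $\cap\otimes|\otimes|$ gives $\CC^4\leftarrow\CC^3\to\CC^2$ with legs $(b,c,d)\mapsto(b,b,c,d)$ and $(b,c,d)\mapsto(c,d)$.

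Next I would take the two pullbacks in order. The first pullback imposes $a=a_1$, $a_2=f_1(v_1,v_2)$ and $b=f_2(v_1,v_2)$, so the combined total space collapses to a $\CC^3$ parametrized by $(a_1,v_1,v_2)$, and the right leg sends this triple to $(a_1,g_1(v_1,v_2),g_2(v_1,v_2),f_2(v_1,v_2))\in\CC^4$. The second pullback against the diagonal in $\cap\otimes|\otimes|$ imposes $a_1=g_1(v_1,v_2)$, which collapses the total space to $\CC^2$ with coordinates $(v_1,v_2)$. Reading off the legs through $\alpha_1$ and $\beta_3$ yields $(g_1(v_1,v_2),f_1(v_1,v_2))$ on the left and $(g_2(v_1,v_2),f_2(v_1,v_2))$ on the right, which is exactly the span~\eqref{eqn:rotationofspan}.

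The only real obstacle is bookkeeping: tracking all seven intermediate coordinates through the two fibre products while keeping the cap/cup orientations consistent with those fixed in Figure~\ref{fig:pi1TangleGens}, so that $\cA_t(\cup)$ and $\cA_t(\cap)$ are applied with the correct diagonals (the convention in Definition~\ref{defn:rotationtangle} makes $R(T)$ well defined precisely by choosing these orientations). Once this matches the setup of Section~\ref{sec:generatorsdescription}, every pullback is cut out by linear equations that simply identify pairs of coordinates, and the computation is mechanical.
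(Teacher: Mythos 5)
Your computation is correct and is precisely the argument the paper has in mind: Lemma~\ref{lem:rotation2tangle} is stated without proof, with the remark that it follows by ``directly applying the functor $\cA_t$ to the tangle $R(T)$'' of Definition~\ref{defn:rotationtangle}, which is exactly the two-pullback calculation you carry out, and your legs $(g_1,f_1)$ and $(g_2,f_2)$ match the claimed span. Your worry about cap/cup orientations is moot here, since the paper records that $\cA_t(\capleft)=\cA_t(\capright)$ and $\cA_t(\cupleft)=\cA_t(\cupright)$, so both orientations give the same diagonal span.
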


\begin{rem}
By extension, we will refer to the span \eqref{eqn:rotationofspan} as the rotation of the span $\cA_t(T)$.
\end{rem}
\begin{rem}
By the universal property of the product, the span \(\cA_t(T)\) determines a unique linear map
$$
\cA_t(T)\colon \CC^{2}\longrightarrow \CC^{4},\qquad 
\cA_t(T)=(f_{1},f_{2},g_{1},g_{2}).
$$
If \(r\colon \CC^{4}\to \CC^{4}\) denotes the permutation given by 
$
r(z_{1},z_{2},z_{3},z_{4})=(z_{3},z_{1},z_{4},z_{2}),
$ then $\cA_t(R(T))$ is determined by the linear map $ r\circ \cA_t(T) \colon \CC^2 \longrightarrow \CC^4$.
\end{rem}

The rotation $R$ has order four and restricts to an involution on unoriented rational tangles \cite{kauffmanrationaltangles}. Moreover, for $2$-tangles we have $R^{2}(T)=T^{\ast}$, so the span $\mathcal{A}_t(R^{2}(T))$ can be described via Lemma~\ref{lem:rotation2tangle} or Equation~\eqref{eqn:transposespan}. Analyzing the action of \(R\) on spans through the functor \(\mathcal{A}_t\) we obtain the following proposition.

\begin{prop} \label{prop:rotationspans}
\begin{align*}
\cA_t(R(X_+)) & =\cA_t(X_-), &  \cA_t(R^2(X_+)), & =\cA_{t^{-1}}(X_+), &  A_t(R^3(X_+)) & =\cA_{t^{-1}}(X_{-}), \\
\cA_t(R(X_-)) & =\cA_{t^{-1}}(X_+),  &   \cA_t(R^2(X_-)) & =\cA_{t^{-1}}(X_-), &  A_t(R^3(X_-))& =\cA_{t^{-1}}(X_{+}).
\end{align*}
\end{prop}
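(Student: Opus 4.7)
The plan is to apply Lemma \ref{lem:rotation2tangle} iteratively, reducing each rotated span to basic form via the equivalence rule for spans with an isomorphism on one leg, as stated in the Remark following the description of $\Span(\Vect_\CC)$. I will write $f_{\pm}^{(s)}$ for the linear maps of \eqref{eqn:X+linearmap} and \eqref{eqn:X-linearmap} with $t$ replaced by $s\in\CC^\ast$, so that $\cA_s(X_{\pm})$ is the basic span $(\id,f_{\pm}^{(s)})$. The single algebraic identity driving the entire argument is
$$
(f_+^{(s)})^{-1}=f_-^{(s)},\qquad s\in\CC^\ast,
$$
which follows from a one-line inversion of the system $b_1=(1-s)a_1+sa_2,\ b_2=a_1$, yielding $a_1=b_2$ and $a_2=s^{-1}b_1+(1-s^{-1})b_2$.

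For $\cA_t(R(X_+))$, Lemma \ref{lem:rotation2tangle} applied to $(\id,f_+^{(t)})$ produces a span whose left leg is $f_+^{(t)}$ and whose right leg is $\id$. Since $f_+^{(t)}$ is an isomorphism, the Remark reduces it to the basic span $(\id,(f_+^{(t)})^{-1})=(\id,f_-^{(t)})=\cA_t(X_-)$. Applying the Lemma again to this reduced form, the next rotated span has left leg the swap $\sigma(a_1,a_2)=(a_2,a_1)$ and right leg $\sigma\circ f_-^{(t)}$; since $\sigma^{-1}=\sigma$, the reduction yields the basic span with right leg
$$
\sigma\circ f_-^{(t)}\circ\sigma(a_1,a_2)=((1-t^{-1})a_1+t^{-1}a_2,\,a_1)=f_+^{(t^{-1})}(a_1,a_2),
$$
so $\cA_t(R^2(X_+))=\cA_{t^{-1}}(X_+)$. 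The third iteration is structurally identical to the first, but with parameter $t^{-1}$, and yields $\cA_t(R^3(X_+))=\cA_{t^{-1}}(X_-)$. The three identities starting from $X_-$ follow by exactly the same procedure applied to $(\id,f_-^{(t)})$: the first rotation triggers a $\sigma$-reduction (which inverts the parameter), the second an $f_\pm$-type reduction, and the third a $\sigma$-reduction again.

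The main obstacle, such as it is, is purely bookkeeping: one must track whether the invertible leg of a given rotated span is of type $f_{\pm}^{(s)}$, in which case the parameter $s$ is preserved under the reduction, or of type $\sigma$, in which case the parameter is inverted, $s\mapsto s^{-1}$. This dichotomy is dictated by the right leg of the reduced span being rotated: rotating $(\id,f_+^{(s)})$ presents $f_+^{(s)}$ on the left of the new span, whereas rotating $(\id,f_-^{(s)})$ presents $\sigma$. Once this alternation is recorded, the six identities drop out by the direct substitutions above.
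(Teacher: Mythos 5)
Your method is the same as the paper's: apply Lemma \ref{lem:rotation2tangle} iteratively and reduce each rotated span to basic form through its invertible left leg, using $f_+^{-1}=f_-$ together with conjugation by the swap $c(a_1,a_2)=(a_2,a_1)$. The bookkeeping dichotomy you describe (an $f_\pm$-type leg preserves the parameter, a swap leg inverts it) is exactly the content of the paper's computation, and the first five identities check out precisely as you state them.

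However, your assertion that all six identities ``drop out by the direct substitutions above'' fails for the last one. By your own recipe, $\cA_t(R^2(X_-))$ is the basic span $(\id,f_-(t^{-1}))$, and the third rotation triggers a swap-reduction, which conjugates by $c$ and inverts the parameter:
\begin{equation*}
c\circ f_-(t^{-1})\circ c\,(a_1,a_2)=\bigl((1-t)a_1+ta_2,\;a_1\bigr)=f_+(t)(a_1,a_2).
\end{equation*}
Hence your procedure yields $\cA_t(R^3(X_-))=\cA_t(X_+)$, \emph{not} $\cA_{t^{-1}}(X_+)$ as the proposition asserts. This is in fact a typo in the statement --- it is confirmed by Table \ref{tab:basic_spans}, which assigns $f_+(t)$ to $R^3(X_-)$, by the relation $R^4=\id$ combined with $\cA_t(R(X_+))=\cA_t(X_-)$, and by the final line of the paper's own proof, which derives the $X_-$ row from the $X_+$ row shifted by one rotation. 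A careful execution of your substitutions would have surfaced this discrepancy; as written, your proof claims to establish an identity that your own computation contradicts.
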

\begin{proof}
If we follow the conventions already set in Section \ref{sec:agl1repsandAlexanderinvariants}, let us recall that the basic spans $\cA_t(X_+)$ and $\cA_t(X_{-})$ are determined by the linear maps $f_+$, $f_-: \CC^2 \mapsto \CC^2$, where we recall that
\begin{equation} \label{eqn:f+f-functionequation}
f_+(a_1,a_2)=((1-t)a_1+ta_2,a_1), \quad f_{-}(a_1,a_2)=(a_2,t^{-1}a_1+(1-t^{-1})a_2)
\end{equation}
Although the maps $f_+,f_-$ depend on $t\in\CC^{\ast}$, we will emphasize this dependence only when necessary. Applying Lemma \ref{lem:rotation2tangle}, we obtain that $\cA_t(R(X_+))$ and $\cA_t(R(X_{-}))$ are the spans
\[
\xymatrix{
& \CC^2 \ar[ld]_{f_+} \ar[rd]^{\id}& \\
\CC^2 & & \CC^2
}\quad  \xymatrix{
& \CC^2 \ar[ld]_{c} \ar[rd]^{c\; \circ f_{-}}& \\
\CC^2 & & \CC^2
} 
\]
where $c$ is the permutation $c(a_1,a_2)=(a_2,a_1)$. Since $c \circ f_- \circ c \, (a_1,a_2)= ((1-t^{-1})a_1+t^{-1}a_2,a_1)$ and $f_+^{-1}=f_-$, we obtain that $\cA_t(R(X_+))=\cA_t(X_-)$ and $\cA_t(R^2(X_+))=\cA_t(R(X_-))=\cA_{t^{-1}}(X_+)$. Finally, $\cA_t(R^3(X_+))$ is given by the span
\[
 \xymatrix{
& \CC^2 \ar[ld]_{c} \ar[rd]^{c\; \circ f_{+}}& \\
\CC^2 & & \CC^2
} 
\]
which is equivalent to $A_{t^{-1}}(X_{-})$. Spans for $X_-$ follow from the equality $A_t(R(X_+))=A_t(X_-)$.
\end{proof}

We may observe that each leg of the spans that appear in Proposition \ref{prop:rotationspans} is an isomorphism. Therefore, we can restate Proposition \ref{prop:rotationspans}  in terms of basic spans, providing an second description of the effect of rotating a span with respect to the functor $\cA_t$. To ease notation, given linear maps $f,g:\CC^2\longrightarrow \CC^2$, let $(f,g)$ denote the span $\CC^2 \overset{f}{\longleftarrow} \CC^2 \overset{g}{\longrightarrow} \CC^2$. Besides, let $C(f)$ be the linear map $c\circ f \circ c$ and let $I(f)=f^{-1}$.
\begin{prop} \label{prop:diagramrotatedspans}
Consider the tangle generators $X_+=\crossingXplus$ and $X_-=\crossingXminus$, together with their rotations. The action of the functor $\cA_t$ on each of these generators is summarized in the following commutative diagram of basic spans:
\begin{equation} \label{eqn:diagramrotatedspans}
\xymatrix{
  \vcenter{\hbox{%
    $\begin{array}{c}
      \cA_t\!\left(\crossingXplus\right)=\cA_t\!\left(\crossingXminusthree\right)\\
      (\id, f_+(t))
    \end{array}$}} \ar[r]^I \ar[d]^C &
  \vcenter{\hbox{%
    $\begin{array}{c}
      \cA_t\!\left(\crossingXplusone\right)=\cA_t\!\left(\crossingXminus\right)\\
      (\id, f_-(t))
    \end{array}$}} \ar[d]^C
  \\
  \vcenter{\hbox{%
    $\begin{array}{c}
      \cA_t\!\left(\crossingXplusone\right)=\cA_t\!\left(\crossingXminustwo\right)\\
      (\id, f_-(t^{-1}))
    \end{array}$}} \ar[r]^I &
  \vcenter{\hbox{%
    $\begin{array}{c}
      \cA_t\!\left(\crossingXplustwo\right)=\cA_t\!\left(\crossingXminusone\right)\\
      (\id, f_+(t^{-1}))
    \end{array}$}}
}
\end{equation}

\end{prop}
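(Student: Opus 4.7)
The plan is to reduce the proposition to Proposition \ref{prop:rotationspans} together with the purely algebraic fact that the operations $I$ (inversion) and $C$ (conjugation by the involution $c$) commute on the group of linear automorphisms of $\CC^2$.

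First, I would identify each of the four corners with a basic span. By Proposition \ref{prop:rotationspans}, each span $\cA_t(R^k(X_\pm))$, $k=0,1,2,3$, equals $\cA_{t^{\pm 1}}(X_\pm)$ up to the substitution $t\mapsto t^{-1}$, and each of these has $\id$ as its left leg. The remark on basic spans then exhibits each of the eight rotated crossings as a uniquely determined basic span with second leg among $f_+(t)$, $f_-(t)$, $f_+(t^{-1})$, $f_-(t^{-1})$. Matching these with the orientations of the eight diagrams via Lemma \ref{lem:rotation2tangle} yields the four corner labels.

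Next, I would verify each arrow. For the horizontal arrow $I$, the relation $f_+\circ f_-=\id$ coming from Reidemeister~II (equivalently, from \eqref{eqn:f+f-functionequation} by direct computation) gives $I(f_+(t))=f_-(t)$ and $I(f_-(t^{-1}))=f_+(t^{-1})$, so $I$ maps top-left to top-right and bottom-left to bottom-right. For the vertical arrow $C$, I would compute $c\circ f_+(t)\circ c$ and $c\circ f_-(t)\circ c$ directly from \eqref{eqn:f+f-functionequation}, exactly as in the proof of Proposition \ref{prop:rotationspans}, to obtain
\[
C(f_+(t))=f_-(t^{-1}), \qquad C(f_-(t))=f_+(t^{-1}),
\]
so $C$ maps top-left to bottom-left and top-right to bottom-right.

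Commutativity of the square is then automatic, since both compositions send $f_+(t)$ to $f_+(t^{-1})$. More conceptually, it follows from the identity
\[
(I\circ C)(f)=(cfc)^{-1}=c^{-1}f^{-1}c^{-1}=cf^{-1}c=(C\circ I)(f),
\]
which uses only $c^2=\id_{\CC^2}$. The main point requiring care is bookkeeping: correctly tracking how the $\pi/2$-rotation convention of Lemma \ref{lem:rotation2tangle} matches each of the eight oriented crossings in \eqref{eqn:generatorstanglecat} to one of the four basic spans. No further difficulty is expected, as the proposition is essentially a structural repackaging of Proposition \ref{prop:rotationspans}.
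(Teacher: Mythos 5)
Your proposal is correct and follows essentially the same route as the paper: identify the eight rotated crossings with the four basic spans via Proposition \ref{prop:rotationspans} and Lemma \ref{lem:rotation2tangle}, then verify the arrows using $f_+=f_-^{-1}$ and conjugation by $c$, with commutativity following from $c^2=\id$. One remark in your favor: your conjugation identity $c\circ f_{\pm}(t)\circ c=f_{\mp}(t^{-1})$ is the one that actually follows from \eqref{eqn:f+f-functionequation} and is the one consistent with the diagram \eqref{eqn:diagramrotatedspans}; the formula $c\circ f_{\pm}(t)\circ c=f_{\pm}(t^{-1})$ displayed in the paper's proof is an index typo.
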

\begin{proof}
The direct computations for the basic spans equivalent to those in Proposition~\ref{prop:rotationspans} are collected in Table~\ref{tab:basic_spans}.
\begin{table}[h!]
    \centering
    \renewcommand{\arraystretch}{1.4}
    \begin{tabular}{|c|c||c|c|}
        \hline
        \( \crossingXplus \) & Basic span linear map & \( \crossingXminus \) & Basic span linear map \\
        \hline
        \( \crossingXplus \) & \( f_+(t) \) & \( \crossingXminus \) & \( f_-(t) \) \\
        \( \crossingXplusone \) & \( f_-(t) \) & \( \crossingXminusone \) & \( f_+(t^{-1}) \) \\
        \( \crossingXplustwo \) & \( f_+(t^{-1}) \) & \( \crossingXminustwo \) & \( f_-(t^{-1}) \) \\
        \( \crossingXplusthree \) & \( f_-(t^{-1}) \) & \( \crossingXminusthree \) & \( f_+(t) \) \\
        \hline
    \end{tabular}
    \caption{Basic spans of \( X_+ \) y \( X_- \) and their rotations}
    \label{tab:basic_spans}
\end{table}
The algebraic properties of the maps $f_+$ and $f_-$, namely
$$
f_+=f_-^{-1}, \quad c\circ f_{\pm}(t)\circ c=f_{\pm}(t^{-1}), 
$$
complete the commutative diagram \eqref{eqn:diagramrotatedspans}.
\end{proof}
\begin{rem} \label{rem:rotatedspansdiagram}
Although several of the following properties follow directly from the definition of $\cA_t$, Proposition~\ref{prop:diagramrotatedspans} allows us to state the following consequences:
\begin{itemize}
\item Changing the orientation of the undercrossing strand of any tangle diagram generator has no effect on $\cA_t$.
\item $I^2=C^2=\id$, and $IC=CI$, so there is an action of the Klein four-group $F$ on the set of spans of the tangle diagram generators, with a single orbit of spans corresponding to the rotations of $\crossingXplus$ and $\crossingXminus$.
\item For any of these spans, since $T^*=R^2(T)$, duality corresponds to inverting $t\in \CC^{\ast}$, as  $\cA_t(T^*)=\cA_{t^{-1}}(T)$.
\item The linear maps of the basic spans $\cA_t(R^{4-i}(\crossingXminus))$ and $\cA_t(R^i(\crossingXplus))$ are inverses of each other. Geometrically $R^{4-i}(\crossingXminus)$ is the mirror image of $\cA_t(R^i(\crossingXplus))$, if crossings and orientations are both inverted.
\end{itemize}
\end{rem}

Extending the usual notion for knots, we define the \emph{reverse} of a tangle as the new tangle that originates when the orientation of every strand is reversed. The following corollary, expected from the nature of the Alexander module, also stems from Proposition \ref{prop:diagramrotatedspans}:

\begin{cor} \label{cor:spanreverse}
Let $T$ be a tangle and let $T'$ be its reverse. Then $\cA_t(T')=\cA_{t^{-1}}(T)$ 
\end{cor}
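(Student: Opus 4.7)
My plan is to reduce the statement to the six elementary generators of $\cT$ listed in \eqref{eqn:generatorstanglecat} and then invoke the monoidal functoriality of $\cA_t$ and $\cA_{t^{-1}}$. The reverse operation $T\mapsto T'$ is compatible with composition and tensor product, i.e.\ $(T_1\circ T_2)'=T_1'\circ T_2'$ and $(T_1\otimes T_2)'=T_1'\otimes T_2'$, since reversing every strand orientation acts generator-wise on any decomposition. Consequently, if the identity $\cA_t(g')=\cA_{t^{-1}}(g)$ holds for each generator $g$, functoriality immediately extends it to every tangle $T$. Note that reversing also flips the signs of the source and target objects, but since $\cA_t(\epsilon)$ depends only on $\abs{\epsilon}$, the underlying vector spaces of the span remain the same.

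The cup and cap generators are handled first. Reversal interchanges $\cupright\leftrightarrow\cupleft$ and $\capright\leftrightarrow\capleft$; by \eqref{eqn:evandcoevspans}, the two spans in each pair coincide and do not depend on $t$, so the identity $\cA_t(g')=\cA_{t^{-1}}(g)$ is trivially satisfied for these four morphisms.

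For the braiding generators, flipping both strand orientations of $X_+=\crossingXplus$ produces $\crossingXplustwo$. According to Table \ref{tab:basic_spans}, $\cA_t(\crossingXplustwo)$ is the basic span whose linear map is $f_+(t^{-1})$, which by \eqref{eqn:f+f-functionequation} is precisely $\cA_{t^{-1}}(X_+)$. An identical argument using $\crossingXminustwo$ and $f_-(t^{-1})$ handles $X_-$. The main observation throughout is that the nontrivial $t$-dependence of $\cA_t$ is confined to the braidings, and reversing both strands in a braiding generator is exactly what converts $f_\pm(t)$ into $f_\pm(t^{-1})$; once this is observed, no real obstacle remains, as everything else is formal bookkeeping via monoidal functoriality.
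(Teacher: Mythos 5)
Your proof is correct and follows essentially the same route as the paper: both reduce to the generators, observe that reversal acts on the crossings as the rotation $R^2$ (sending $\crossingXplus$ to $\crossingXplustwo$, whose span is $f_+(t^{-1})$ by Proposition \ref{prop:rotationspans}/Table \ref{tab:basic_spans}) and on the cups and caps as a $t$-independent swap, and then conclude by monoidal functoriality.
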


\begin{proof}
Given a tangle $T$, we may decompose it into a product and composition of elementary tangles and their rotations. In terms of these generators, a change of orientation corresponds to a rotation of angle $\pi$ of each of the generators $X_+,X_{-}$ and a change of orientation of the evaluation and coevaluation tangles $\cupright, \cupleft, \capright, \capleft$. It is straightforward to verify that $\cA_t(\cupleft)=\cA_t(\cupright)$ and also  $\cA_t(\capleft)=\cA_t(\capright)$ are independent of $t\in \CC^{\ast}$. Proposition \ref{prop:rotationspans} shows that $\cA_t(R^2(X_{\pm}))=\cA_{t^{-1}}(X_{\pm})$, which functorially implies that $\cA_t(T')=\cA_{t^{-1}}(T)$.
\end{proof}

\begin{cor}
Let $K$ be a knot and let $K'$ denote its reverse. Then $\cA_t(K')=\cA_{t^{-1}}(K)$.
\end{cor}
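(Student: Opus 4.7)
The plan is to obtain this corollary as an immediate specialization of the preceding Corollary \ref{cor:spanreverse}. First I would observe that in the tangle category $\cT$, any knot $K$ corresponds to a morphism $\emptyset \to \emptyset$, so $K$ is a particular instance of an oriented tangle whose source and target are both empty. Under this identification, the reverse knot $K'$, obtained by reversing the orientation of every strand, agrees with the reverse of $K$ viewed as a tangle, since the reverse operation is defined by the same prescription on the underlying oriented 1-submanifold of $\RR^2\times [0,1]$ and does not depend on whether the boundary is empty.

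With this identification in place, I would invoke Corollary \ref{cor:spanreverse} with $T=K$, which yields $\cA_t(K')=\cA_{t^{-1}}(K)$ directly. Concretely, both $\cA_t(K)$ and $\cA_t(K')$ are spans of the shape $0\leftarrow V\to 0$, where the middle vector space is a representative of the fibre of the Alexander fibration at $t$; the content of the corollary is the identification of these middle vector spaces after the appropriate change $t\mapsto t^{-1}$.

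I do not expect any serious obstacle. The hard part of the argument has already been carried out in the proof of Corollary \ref{cor:spanreverse}: the reversal is decomposed into a rotation by $\pi$ of each crossing generator $X_\pm$ together with a reorientation of the cups and caps $\cupright,\cupleft,\capright,\capleft$; one then applies Proposition \ref{prop:rotationspans} to conclude $\cA_t(R^2(X_\pm))=\cA_{t^{-1}}(X_\pm)$, while $\cA_t$ is $t$-independent on the evaluation and coevaluation generators. Functoriality of $\cA_t$ propagates this identity through any tangle decomposition of $K$, and the only remaining verification is the compatibility between the knot-theoretic and tangle-theoretic notions of reverse, which is immediate from the definitions.
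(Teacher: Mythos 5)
Your proposal is correct and matches the paper's (implicit) argument exactly: the paper states this corollary without proof precisely because it is the specialization of Corollary \ref{cor:spanreverse} to the case of a tangle $\emptyset\to\emptyset$, which is what you do. Your additional remark that the knot-theoretic and tangle-theoretic notions of reverse agree is the only point needing mention, and you handle it correctly.
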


\begin{rem} \label{rem:algebraicinvolution} Note that Corollary~\ref{cor:spanreverse} can also be obtained through an algebro-geometric approach by viewing $\cA_t(T)$ as the ``fibres'' of the singular vector bundle $\cA(T)$. In this framework, considering the involution $\tau:\CC^{\ast}\to\CC^{\ast}$ that takes $t$ to $t^{-1}$, one can show that there exists an isomorphism of coherent sheaves 
$$
  \tau^{\ast}(\mathcal{M}_T)\cong\mathcal{M}_{T'},
$$
where $\mathcal{M}_T$ denotes the sheaf of sections of the Alexander fibration associated with the singular vector bundle $\cA(T)$, whose global sections form the Alexander module.
\end{rem}

\subsection*{Braidings}

We analyze in this subsection the image under $\cA_t$ of iterated compositions of the elementary tangles $X_+, X_{-}$ and their rotations. These oriented twisted braidings, formed by two interlacing strands, are shown in Figure \ref{fig:twistedbraidings}, and they are the basic building blocks in which we may decompose any rational tangle, presented in its 2-bridge form, or any pretzel tangle.

Since orientations play an important role in the functor $\cA_t$, we distinguish two types of oriented twisted braidings: \emph{codirected} (which are oriented braids) and \emph{alternating} (strands have opposite orientation). We recall that $f_{+}(t)$ and $f_-(t)$, defined in \ref{eqn:f+f-functionequation}, are the linear maps of the basic spans $\cA_t(X_+)$ and $\cA_t(X_-)$, we emphasize here their dependence on $t\in \CC^{\ast}$.
\begin{figure}[h]
\includegraphics[width=8cm]{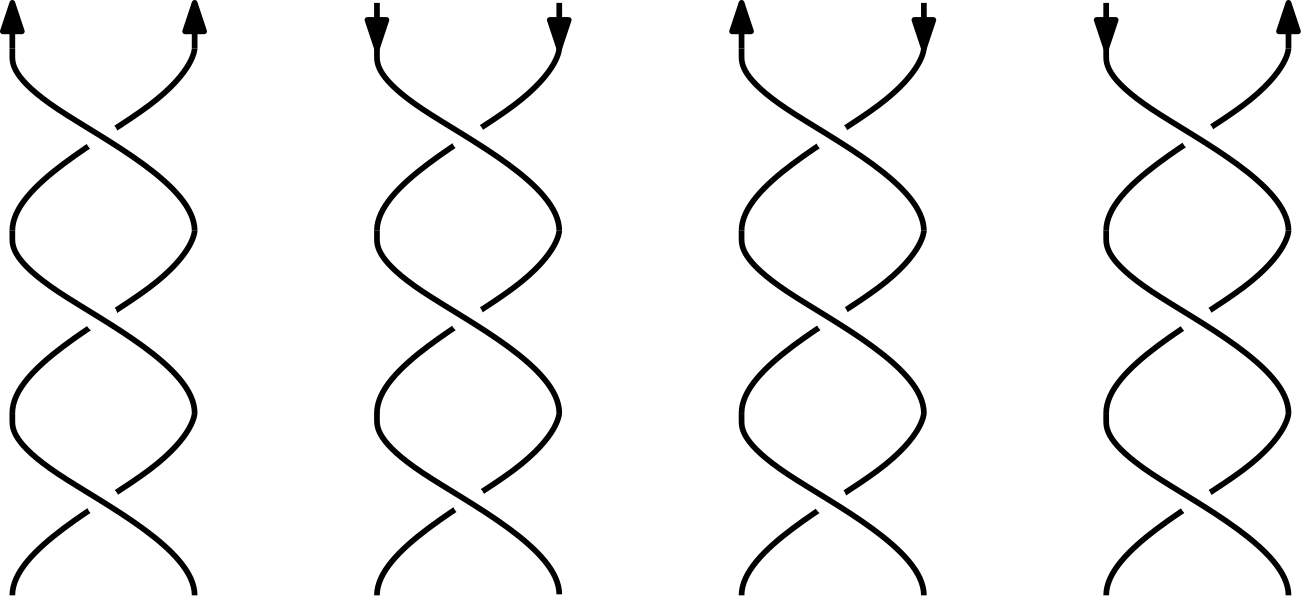}
\caption{Codirected and alternating braidings} \label{fig:twistedbraidings}
\end{figure}

\begin{prop}[Codirected braids] \label{prop:powersf+f-} $\cA_t(X_+^n), \cA_t(X_{-}^n)$ are the basic spans associated to the maps $f_+^n,f_{-}^n: \CC^2 \longrightarrow\CC^2$, defined for $n\geq 1$ as
\begin{align*}
f_{+}^n& =[n]_{-t} \,f_{+} + (1-[n]_{-t})\Id \\
f_{-}^n& =[n]_{-t^{-1} \,
}f_{-} + (1-[n]_{-t^{-1}})\Id 
\end{align*}
\end{prop}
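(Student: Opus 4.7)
The plan is to prove both identities by induction on $n \geq 1$. The base case $n=1$ is immediate: since $[1]_{-t}=1$, the formulas reduce to $f_+^1 = f_+$ and $f_-^1 = f_-$. Note also that, since $\cA_t(X_+)$ and $\cA_t(X_-)$ are basic spans with identity as left leg, their iterated compositions in $\Span(\Vect_\CC)$ are still basic spans whose right leg is the composite of the right legs; so the statement reduces to a purely linear algebra claim about $f_+^n$ and $f_-^n$ as endomorphisms of $\CC^2$.

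The key algebraic input is the quadratic relation
\begin{equation*}
f_+^2 = (1-t)\,f_+ + t\,\id,\qquad f_-^2 = (1-t^{-1})\,f_- + t^{-1}\,\id,
\end{equation*}
which I would establish either by direct substitution in \eqref{eqn:f+f-functionequation}, or more conceptually by Cayley--Hamilton: with respect to the standard basis, $f_+$ has matrix $\bigl(\begin{smallmatrix}1-t & t \\ 1 & 0\end{smallmatrix}\bigr)$, with characteristic polynomial $x^2 - (1-t)x - t$, whose roots are $1$ and $-t$. Note that $(1-t)=[2]_{-t}$ and $t = 1-[2]_{-t}$, so this is exactly the $n=2$ instance of the claim.

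For the inductive step, assuming $f_+^n = [n]_{-t}\,f_+ + (1-[n]_{-t})\,\id$, I compute
\begin{equation*}
f_+^{n+1} = f_+\circ f_+^n = [n]_{-t}\,f_+^2 + (1-[n]_{-t})\,f_+,
\end{equation*}
substitute the quadratic relation, and collect terms to get $(1-t\,[n]_{-t})\,f_+ + t\,[n]_{-t}\,\id$. Applying property P1 in the form $(-t)[n]_{-t}+1 = [n+1]_{-t}$, I recognize $1-t\,[n]_{-t} = [n+1]_{-t}$, which closes the induction. The argument for $f_-$ is identical after replacing $t$ by $t^{-1}$; alternatively, one can derive it from the $f_+$ case via Proposition \ref{prop:rotationspans}, since $\cA_t(X_-) = \cA_{t^{-1}}(R(X_+))$ up to the equivalence explained there.

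No genuine obstacle is expected; the only bookkeeping subtlety is to keep track of the sign in the argument $-t$ of the quantum integer, so that the recursion $(-t)[n]_{-t}+1=[n+1]_{-t}$ is applied with the correct sign convention rather than the more familiar $t[n]_t+1=[n+1]_t$.
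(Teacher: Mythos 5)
Your proof is correct and follows essentially the same route as the paper: the quadratic relation $f_+^2=(1-t)f_++t\,\Id$ obtained from Cayley--Hamilton, followed by induction using the recursion $[n+1]_{-t}=1-t[n]_{-t}$ (property P1 evaluated at $-t$). The additional remarks about the reduction to basic spans and the sign bookkeeping are accurate but not needed beyond what the paper's own argument already contains.
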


\begin{proof}
The Cayley-Hamilton theorem applied to $f_{+}$ yields that $f_+^2=(1-t)f_+ +t\Id$. The result follows by induction, using the property of quantum integers \eqref{property:quantumint1}, that is, $[n+1]_{-t}=1-t[n]_{-t}$. The proof for $f_-^n$ is similar.
\end{proof}

We will use two equivalent expressions for the maps $f^n_+$ and $f^n_-$, specialized either at $t^{-1}$ or with $t$-factors omitted, as stated in the following corollary:
\begin{cor} \label{cor:X+-matrices}
With respect to the basis stablished in Section \ref{sec:agl1repsandAlexanderinvariants}, the linear operator of the basic span $\cA_{t}(X_{\pm})$ is represented by the matrices
$$
\cA_{t}(X_{\pm}^n)=\begin{pmatrix} 1-[\mp n]_{-t^{-1}}  & [\mp n]_{-t^{-1}} \\t^{-1}[\mp n]_{-t^{-1}} & 1-t^{-1}[\mp n]_{-t^{-1}} \end{pmatrix} =\begin{pmatrix} 1-[\mp n]_{-t^{-1}}  & [\mp n]_{-t^{-1}} \\ [\pm n]_{-t} & 1-[\pm n]_{-t} \end{pmatrix}
$$
\end{cor}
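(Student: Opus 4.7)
The plan is to derive the matrices directly from Proposition~\ref{prop:powersf+f-} and then use the identities P2 and P3 for quantum integers to recognize the resulting expressions in the two forms listed in the statement.

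First, I would read off the matrices of $f_+$ and $f_-$ (with respect to the standard basis of $\CC^2$ fixed in Section~\ref{sec:agl1repsandAlexanderinvariants}) from their defining equations \eqref{eqn:X+linearmap}--\eqref{eqn:X-linearmap}, obtaining
$$
f_+=\begin{pmatrix} 1-t & t \\ 1 & 0 \end{pmatrix},\qquad f_-=\begin{pmatrix} 0 & 1 \\ t^{-1} & 1-t^{-1} \end{pmatrix}.
$$
Substituting into the formulas of Proposition~\ref{prop:powersf+f-} gives, for instance,
$$
f_+^n = [n]_{-t}\!\begin{pmatrix} 1-t & t \\ 1 & 0 \end{pmatrix}+(1-[n]_{-t})\,\Id
= \begin{pmatrix} 1-t[n]_{-t} & t[n]_{-t} \\ [n]_{-t} & 1-[n]_{-t} \end{pmatrix},
$$
and analogously for $f_-^n$.

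Second, I would match the expression to the statement using the quantum integer identities. The key calculation is the identity $[-n]_{-t^{-1}}=t[n]_{-t}$, which follows by combining P2 (applied at $-t$ and at $-t^{-1}$) with P3 (applied at $-t$): from P3 one has $[n]_{-t^{-1}}=t[-n]_{-t}$, and from P2, $[-n]_{-t^{-1}}=-(-t)^n[n]_{-t^{-1}}$ and $[-n]_{-t}=-(-t)^{-n}[n]_{-t}$; combining these yields the claimed equality. This identification immediately turns the $f_+^n$ matrix above into
$\begin{pmatrix} 1-[-n]_{-t^{-1}} & [-n]_{-t^{-1}} \\ [n]_{-t} & 1-[n]_{-t} \end{pmatrix}$, which is the $X_+^n$ case of the corollary. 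The second matrix form in the statement is then obtained by undoing the step $t[n]_{-t}=[-n]_{-t^{-1}}$ only on the bottom row, that is, by writing $[n]_{-t}=t^{-1}[-n]_{-t^{-1}}$ and $1-[n]_{-t}=1-t^{-1}[-n]_{-t^{-1}}$. The $X_-^n$ case is completely analogous, with $t$ and $t^{-1}$ swapped throughout, and reduces to the $X_+^n$ case via the duality observation $\cA_t(X_-)=\cA_{t^{-1}}(X_+^{-1})$ already recorded in Proposition~\ref{prop:diagramrotatedspans}.

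I do not anticipate a serious obstacle: the proof is essentially a bookkeeping exercise. The only delicate point is keeping track of signs and of the base ($-t$ vs.\ $-t^{-1}$) when translating between the two equivalent presentations of the matrix via P2 and P3, so I would state the identity $[-n]_{-t^{-1}}=t[n]_{-t}$ explicitly at the outset and then apply it uniformly.
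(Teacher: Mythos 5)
Your proposal is correct and is essentially the derivation the paper intends: the corollary is stated without proof as a direct substitution of the matrices of $f_{\pm}$ into Proposition~\ref{prop:powersf+f-}, followed by the quantum-integer identity $[-n]_{-t^{-1}}=t[n]_{-t}$ (equivalently $t^{-1}[n]_{-t^{-1}}=[-n]_{-t}$) to pass between the two displayed forms, exactly as you do. One minor slip in a side remark: the duality you invoke should read $\cA_t(X_-)=\cA_t(X_+)^{-1}$ (i.e.\ $f_-(t)=f_+(t)^{-1}$, with no inversion of $t$), as recorded in Proposition~\ref{prop:diagramrotatedspans}; this does not affect your argument since the $X_-^n$ case also follows by the direct computation you describe.
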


Alternating positive braidings are compositions of the tangles $R(X_+)$ and $R^3(X_+)$. Their image by $\cA_t$\ was computed in \ref{prop:rotationspans} and is  respectively given by the spans
\begin{equation}\label{eqn:RX+R3X+spans}
\xymatrix{
& \CC^2 \ar[ld]_{f_+(t)} \ar[rd]^{\id}& \\
\CC^2 & & \CC^2
}\qquad  \xymatrix{
& \CC^2 \ar[ld]_{c} \ar[rd]^{c\; \circ f_{+}(t)}& \\
\CC^2 & & \CC^2
} 
\end{equation}
\begin{prop}[Alternating braidings] \label{prop:twistedtanglesformulas}
Let $\widetilde{T}_+^n$ be the oriented tangle defined as:
\begin{itemize} 
\item $\widetilde{T}_+^{2k}=(R^3(X_+)\circ R(X_+))^k$.
\item $\widetilde{T}_+^{2k+1}=R(X_+)\circ (R^3(X_+)\circ R(X_+))^k$.
\end{itemize}
Then $\cA_t(\widetilde{T}_+^{2k})$ and $\cA_t(\widetilde{T}_+^{2k+1})$ are the basic spans associated to the maps $\tilde{f}^n_+: \CC^2 \longrightarrow \CC^2 $ defined as:
\begin{align} \label{eqn:twistedspans}
\tilde{f}_{+}^{2k}  & = \Id  - k(1-t^{-1})h \\
\tilde{f}_{+}^{2k+1} & = f_-(t) - k(1-t^{-1})h
\end{align}
where $h:\CC^2 \longrightarrow \CC^2$ is the nilpotent map $h(a_1,a_2)=(a_1-a_2,a_1-a_2)$.
\end{prop}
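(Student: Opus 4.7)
The plan is to reduce the computation to linear algebra on basic spans. By Proposition~\ref{prop:diagramrotatedspans} and Table~\ref{tab:basic_spans}, both $\cA_t(R(X_+))$ and $\cA_t(R^3(X_+))$ are basic spans, with right-leg linear maps $f_-(t)$ and $f_-(t^{-1})$ respectively; moreover, composition of basic spans in $\Span(\Vect_{\CC})$ corresponds to ordinary composition of their underlying linear maps. Hence the proposition reduces to establishing
$$
(f_-(t^{-1})\circ f_-(t))^k = \Id - k(1-t^{-1})h
$$
for the even case, and then composing this on the left with $f_-(t)$ for the odd case.

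The first step is the base case $k=1$: I would compute $f_-(t^{-1})\circ f_-(t)$ directly from \eqref{eqn:f+f-functionequation}, a short matrix calculation, and verify that it equals $\Id - (1-t^{-1})h$. This identifies the correct nilpotent map to work with.

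The structural observation that makes the induction trivial is the nilpotency of $h$: since $h(a_1,a_2)=(a_1-a_2,a_1-a_2)$ lands in the diagonal subspace $\{(x,x):x\in\CC\}$, on which $h$ vanishes identically, we have $h^2=0$. This yields the linearization rule $(\Id+\lambda h)(\Id+\mu h)=\Id+(\lambda+\mu)h$ for any scalars, and a routine induction on $k$ then produces $(\Id-(1-t^{-1})h)^k=\Id-k(1-t^{-1})h$, settling the even case.

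For the odd case, the recursive definition $\widetilde T_+^{2k+1}=R(X_+)\circ \widetilde T_+^{2k}$ translates under $\cA_t$ to left composition by $f_-(t)$, yielding $\tilde f_+^{2k+1}=f_-(t) - k(1-t^{-1})(f_-(t)\circ h)$. The final ingredient needed is $f_-(t)\circ h=h$, which follows since the image of $h$ sits in the diagonal subspace, and a direct check $f_-(t)(x,x)=(x,\,t^{-1}x+(1-t^{-1})x)=(x,x)$ shows that $f_-(t)$ fixes this subspace pointwise. Substituting this identity produces the claimed formula. The main obstacle is really only the base case: once $h$ is identified and its nilpotency noted, the rest is formal.
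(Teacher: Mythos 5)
Your proposal is correct and follows essentially the same route as the paper: identify $\cA_t(R(X_+))$ and $\cA_t(R^3(X_+))$ with the basic spans of $f_-(t)$ and $f_-(t^{-1})$, observe that $f_-(t^{-1})\circ f_-(t)=\Id-(1-t^{-1})h$ is unipotent so its $k$-th power is $\Id-k(1-t^{-1})h$, and conclude the odd case from $f_-(t)\circ h=h$. The only difference is cosmetic — you make the nilpotency $h^2=0$ and the fixed-diagonal argument explicit where the paper simply asserts unipotency.
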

\begin{proof}
Proposition \ref{prop:rotationspans} states that $R(X_+)$ and $R^3(X_+)$ are the basic spans associated to the maps $f_-(t), f_-(t^{-1})$, respectively. Therefore, $\cA_t(\widetilde{T}_+^{2k})$ is the basic span associated to the map $(f_-(t^{-1}) \circ f_-(t))^k$. 

Since $f_-(t^{-1}) \circ f_-(t)$ is a unipotent map, which we may write as $\Id-(1-t^{-1})h$, it follows that $(f_-(t^{-1}) \circ f_-(t))^k$ equals $\Id-k(1-t^{-1})h$. Moreover, the map $f_-(t)\circ (f_-(t^{-1}) \circ f_-(t))^k$, which is the basic span of $\widetilde{T}_+^{2k+1}$, is equal to $f_-(t)-k(1-t^{-1})h$, since $f_-\circ h = h$.
\end{proof}

\begin{cor}
The basic spans of twisted alternating braidings of the form $(R(X_+)\circ R^3(X_+))^k$ and $R^3(X_+) \circ (R(X_+)\circ R^3(X_+))^k$, which are the reverses of $\widetilde{T}_{2k}^+$ and $\widetilde{T}_{2k+1}^+$, are also given by the maps $\tilde{f}^{2k}_+$ and $\tilde{f}^{2k+1}_+$ evaluated at $t^{-1}$, by Corollary  \ref{cor:spanreverse}.
\end{cor}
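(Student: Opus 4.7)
The plan is to apply Corollary \ref{cor:spanreverse} once the two tangles listed in the statement have been recognized as the orientation-reverses of $\widetilde{T}_+^{2k}$ and $\widetilde{T}_+^{2k+1}$; the closed formulas evaluated at $t^{-1}$ then follow at once from Proposition \ref{prop:twistedtanglesformulas}.

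The first step is the topological identification of the reverses. Reversal of orientations is a strand-wise operation that does not alter the vertical composition pattern, so $(A \circ B)' = A' \circ B'$ whenever the composite is defined. For the basic pieces, a rotation by $\pi$ of a $2$-tangle flips the direction of travel of both strands while preserving the crossing type, which is exactly the effect of the reverse; hence $R^2(X_\pm) = (X_\pm)'$, and using $R^4 = \id$ one deduces $(R(X_+))' = R^3(X_+)$ and $(R^3(X_+))' = R(X_+)$. Propagating this through the iterated compositions of Proposition \ref{prop:twistedtanglesformulas} yields
$$
(\widetilde{T}_+^{2k})' = \bigl(R(X_+) \circ R^3(X_+)\bigr)^k, \qquad (\widetilde{T}_+^{2k+1})' = R^3(X_+) \circ \bigl(R(X_+) \circ R^3(X_+)\bigr)^k,
$$
which are precisely the tangles appearing in the statement.

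To conclude, applying Corollary \ref{cor:spanreverse}, namely $\cA_t(T') = \cA_{t^{-1}}(T)$, together with the explicit description of $\cA_t(\widetilde{T}_+^{2k})$ and $\cA_t(\widetilde{T}_+^{2k+1})$ from Proposition \ref{prop:twistedtanglesformulas}, gives that the basic spans of the two tangles above are associated with the linear maps $\tilde{f}_+^{2k}(t^{-1})$ and $\tilde{f}_+^{2k+1}(t^{-1})$, as claimed. The only mildly subtle point in the argument is the geometric identification $R^2(X_\pm) = (X_\pm)'$ on the elementary crossings of Figure \ref{fig:pi1TangleGens}, a short pictorial check which, combined with functoriality of $\cA_t$ and the two results cited, makes the rest of the proof routine.
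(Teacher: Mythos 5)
Your argument is correct and is exactly the justification the paper intends: the corollary's proof in the paper is entirely contained in its statement (identify the two tangles as the reverses of $\widetilde{T}_+^{2k}$ and $\widetilde{T}_+^{2k+1}$, then apply Corollary \ref{cor:spanreverse} and Proposition \ref{prop:twistedtanglesformulas}). Your write-up simply makes explicit the pictorial identification $R^2(X_\pm)=(X_\pm)'$ and the compatibility of reversal with composition, both of which are consistent with the paper's own proof of Corollary \ref{cor:spanreverse}.
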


We may also define the negatively oriented version of $\widetilde{T}^{n}_+$, that is:
\begin{itemize} 
\item $\widetilde{T}_-^{2k}=(R^3(X_-)\circ R(X_-))^k$.
\item $\widetilde{T}_-^{2k+1}=R(X_-)\circ (R^3(X_-)\circ R(X_-))^k$.
\end{itemize}

The following proposition computes their associated basic spans: 
\begin{prop} \label{prop:twistedtanglesformulasII}
$\cA_t(\widetilde{T}_-^{2k})$ and $\cA_t(\widetilde{T}_-^{2k+1})$ are the basic spans associated to the maps $\tilde{f}^n_-: \CC^2 \longrightarrow \CC^2 $ defined as:
\begin{align} \label{eqn:twistedspans2}
\tilde{f}_{-}^{2k}  & = \Id  + k(1-t^{-1})h \\
\tilde{f}_{-}^{2k+1} & = f_+(t^{-1}) + k(1-t^{-1})h
\end{align}
\end{prop}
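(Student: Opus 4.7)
The plan is to parallel the proof of Proposition \ref{prop:twistedtanglesformulas} verbatim, interchanging the roles of $f_+$ and $f_-$ to account for the reversal of over- and under-crossings in passing from $X_+$ to $X_-$. First, I would read off from Proposition \ref{prop:diagramrotatedspans} (equivalently, Table \ref{tab:basic_spans}) the basic-span maps of $R(X_-)$ and $R^3(X_-)$, which are $f_+(t^{-1})$ and $f_+(t)$ respectively. Functoriality of $\cA_t$ on basic spans then gives that $\cA_t(\widetilde{T}_-^{2k})$ is the basic span of $\bigl(f_+(t)\circ f_+(t^{-1})\bigr)^k$.

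The core step is the identity $f_+(t)\circ f_+(t^{-1}) = \Id + (1-t^{-1})h$, the positive-chirality analogue of the unipotent identity used in the proof of Proposition \ref{prop:twistedtanglesformulas}. I would verify it by a short direct computation on a generic vector $(a_1,a_2)$ using \eqref{eqn:f+f-functionequation}, rewriting each coordinate as an affine combination to obtain the form $(a_1,a_2) + (1-t^{-1})(a_1-a_2,\,a_1-a_2)$. Because $h$ is nilpotent of order two, the binomial expansion collapses to $\bigl(\Id + (1-t^{-1})h\bigr)^k = \Id + k(1-t^{-1})h$, which is the formula for $\tilde{f}_-^{2k}$.

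For $\tilde{f}_-^{2k+1}$, I would compose the previous map on the left with $f_+(t^{-1})$, matching the definition $\widetilde{T}_-^{2k+1}=R(X_-)\circ(R^3(X_-)\circ R(X_-))^k$. The only additional ingredient is the absorption identity $f_+(t^{-1})\circ h = h$, which is immediate once one notes that the image of $h$ lies in the diagonal of $\CC^2$, and the diagonal is a fixed subspace of $f_+(s)$ for every $s\in\CC^{\ast}$. Combining these yields $\tilde{f}_-^{2k+1} = f_+(t^{-1}) + k(1-t^{-1})h$, as stated.

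I do not anticipate any real obstacle: the argument is an almost word-for-word transcription of the proof of Proposition \ref{prop:twistedtanglesformulas}. The one subtle bookkeeping point is the sign, since here the unipotent correction is $+k(1-t^{-1})h$ rather than $-k(1-t^{-1})h$, reflecting the opposite chirality and traceable to the identity $f_+(t)\circ f_+(t^{-1}) = \Id + (1-t^{-1})h$ as opposed to $f_-(t^{-1})\circ f_-(t) = \Id - (1-t^{-1})h$ used before.
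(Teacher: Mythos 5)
Your proof is correct, but for the even case it takes a genuinely different route from the paper's. The paper does not recompute anything: it observes that $\widetilde{T}_-^{2k}\circ\widetilde{T}_+^{2k}$ is the identity tangle, so $\tilde{f}_-^{2k}$ must be the inverse of $\tilde{f}_+^{2k}=\Id-k(1-t^{-1})h$ from Proposition \ref{prop:twistedtanglesformulas}, and $\bigl(\Id-ch\bigr)^{-1}=\Id+ch$ by nilpotency of $h$; the odd case is then finished exactly as you do, by composing with the basic span $f_+(t^{-1})$ of $R(X_-)$. Your direct computation --- reading $R(X_-)\mapsto f_+(t^{-1})$ and $R^3(X_-)\mapsto f_+(t)$ off Table \ref{tab:basic_spans}, verifying $f_+(t)\circ f_+(t^{-1})=\Id+(1-t^{-1})h$, and then using $h^2=0$ and $f_+(s)\circ h=h$ --- is equally valid, checks out coordinate by coordinate, and has the merit of being self-contained and of pinning down the sign explicitly rather than inheriting it from the inversion. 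Two minor remarks. First, your reading of the rotated spans agrees with Table \ref{tab:basic_spans} and with a direct application of Lemma \ref{lem:rotation2tangle}; note that the entry $\cA_t(R^3(X_-))=\cA_{t^{-1}}(X_+)$ in Proposition \ref{prop:rotationspans} appears to be a typo and should read $\cA_t(X_+)$, i.e.\ $f_+(t)$, which is the value you (correctly) use. Second, for the absorption identity $f_+(s)\circ h=h$ you need the diagonal of $\CC^2$ to be fixed \emph{pointwise} by $f_+(s)$ (which it is, since $f_+(s)(a,a)=(a,a)$), not merely invariant as a subspace; your phrasing should be tightened accordingly.
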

\begin{proof}
For $k\in \ZZ$, $\widetilde{T}^{2k}_- \circ \widetilde{T}^{2k}_+$ is the identity tangle, so the basic span of $\widetilde{T}^{2k}_-$ is the one given by the inverse of the map $\tilde{f}^{2k}_+$. Composing the latter with the basic span of $R(X_-)$, defined by the map $f_+(t^{-1})$ (see Proposition \ref{prop:diagramrotatedspans}), completes the formula in the odd case. 
\end{proof}
\section{2-bridge knots} \label{sec:2-bridgeknots}

Starting from the unoriented trivial $2$-tangle, one can construct new tangles---called \emph{rational tangles}---by composing and adding powers of the basic tangles $X_+$ and $X_-$ and their rotations. Equivalently, a rational tangle is any tangle produced by  successively applying a finite sequence of horizontal and vertical twists $a_1,\ldots,a_n$ to the four endpoints of the trivial tangle or its rotation ($[\infty]$ and $[0]$, respectively).

These tangles were classified by Conway \cite{conwayrational}, who established a bijection between them and the set of extended rational numbers $\mathbb{Q}\cup\{\infty\}$ via continued fraction expansions:
$$
\frac{p}{q} = [a_1,\ldots,a_n] =
a_1 + \cfrac{1}{a_{2}+\cfrac{1}{\ddots + \cfrac{1}{a_n}}}
\;\longmapsto\; T_{[a_1,\ldots,a_n]}.
$$
Conway observed that the arithmetic of these tangles, under addition, inversion, and composition, mirrors the arithmetic of rational numbers. We will revisit them in Section \ref{sec:functorA-1} from their coloring perspective \cite{kauffmanrationaltangles}.

Closing adjacent endpoints of any rational tangle (applying a \emph{numerator} or \emph{denominator closure}) produces a \emph{rational knot}, or a 2-bridge knot $b(p,q)$. These notions are equivalent, and the resulting links were first classified by Schubert using 2-fold branched covers. In what follows, we will use 2-bridge presentations of a rational knot $K$. Starting from a 3-strand braid model of a rational tangle $T=[a_1,\ldots,a_n]$, we tensor with a trivial strand and obtain the rational knot $b(p,q)$ by taking the appropiate closure determined by the parity of $n$, see Figure \ref{fig:closurerational}:

\begin{figure}[h]
\includegraphics[width=14cm]{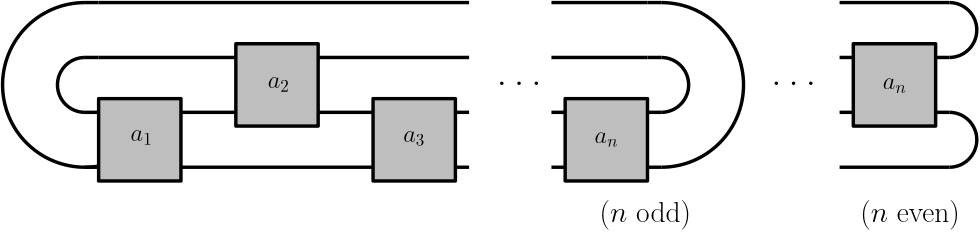}
\caption{Closures of a rational tangle in 3-strand form} \label{fig:closurerational}
\end{figure}

Propositions \ref{prop:powersf+f-}, \ref{prop:twistedtanglesformulas} and \ref{prop:twistedtanglesformulasII} will enable us to compute and study the Alexander invariants of any rational knot, via the functor $\cA_t$.  We begin by recalling the notion of the equalizer of a set of linear maps.
\begin{defn}
Let $V$ and $W$ be vector spaces and let $\mathcal{F}\subseteq \Hom(V,W)$ be a (finite) set of linear maps. The \emph{equalizer} of $\mathcal{F}$ is 
$$
\Eq(\mathcal{F})=\lbrace x\in V \mid f(x)=g(x) \; \text{ for all } f,g\in \mathcal{F} \rbrace.
$$
\end{defn}
When $\mathcal{F}=\lbrace f,g\rbrace$ consists of two maps, we write $\Eq(f,g)$; this is also known in the literature as the difference kernel. The next lemma details the effect of closing a rational tangle in the $\Span(\Vect_{\CC})$ category. 
\begin{lem}\label{lem:2bridgeequalizer}
Let $T$ be a rational tangle presented as a $3$-strand braid, and let $K$ be the associated $2$-bridge knot. Suppose $\cA_t(T\otimes \Id)$ is given by the span
$$
\xymatrix{
& \CC^{4} \ar[ld]_{f} \ar[rd]^{g} & \\
\CC^{4} & & \CC^{4}
}
$$
with linear maps $f,g:\CC^{4}\to\CC^{4}$, where we write $f=(f_1,f_2,f_3,f_4)$ and $g=(g_1,g_2,g_3,g_4)$. Then $\cA_t(K)$ equals
$$
\xymatrix{
& W \ar[ld] \ar[rd] & \\
0 & & 0
}
$$
where $W\subseteq \CC^{4}$ is the linear subspace
$$
W=
\begin{cases}
\Eq(f_1,f_4)\cap \Eq(f_2,f_3)\cap \Eq(g_1,g_4)\cap \Eq(g_2,g_3), & \text{if } n \text{ is odd},\\[2mm]
\Eq(f_1,f_4)\cap \Eq(f_2,f_3)\cap \Eq(g_1,g_2)\cap \Eq(g_3,g_4), & \text{if } n \text{ is even},
\end{cases}
$$
and $n$ denotes the length (hence parity) of the chosen $3$-strand braid presentation of $T$. 

We call $W$ the \emph{$2$-bridge equalizer} of $(f,g)$.
\end{lem}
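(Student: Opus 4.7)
The plan is to realize the $2$-bridge knot $K$ as a composition
$$
K \;=\; \evalmap \,\circ\, (T\otimes \Id) \,\circ\, \coevmap
$$
with suitable evaluation and coevaluation tangles of length $2$ in $\cT$, apply the functor $\cA_t$, and then reduce the computation of $\cA_t(K)$ to an iterated pullback in $\Span(\Vect_\CC)$.

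Inspecting Figure \ref{fig:closurerational}, the bottom arcs of the closure always pair the outer endpoints $1\leftrightarrow 4$ and the inner ones $2\leftrightarrow 3$, regardless of the parity of $n$; this corresponds to the coevaluation tangle whose associated span, by \eqref{eqn:evandcoevspans}, is
$$
\xymatrix{
& \CC^2 \ar[ld] \ar[rd]^{(\id,c)} & \\
0 & & \CC^4
}
$$
where $c(z_1,z_2)=(z_2,z_1)$. The top arcs, however, depend on how the underlying $3$-strand braid matches up its endpoints: for $n$ odd the pairing remains $1\leftrightarrow 4$, $2\leftrightarrow 3$, whereas for $n$ even the pairing is of adjacent type $1\leftrightarrow 2$, $3\leftrightarrow 4$, whose associated evaluation span has left leg the inclusion $(a,b)\mapsto (a,a,b,b)$.

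Next, I would compute the pullback of the coevaluation span with $\cA_t(T\otimes \Id)$. The pullback condition $(\id,c)(z)=f(v)$ reads $(z_1,z_2,z_2,z_1)=(f_1(v),f_2(v),f_3(v),f_4(v))$, which is equivalent to $v\in \Eq(f_1,f_4)\cap \Eq(f_2,f_3)$ (with $z$ then uniquely determined). This yields an intermediate span from $0$ to $\CC^4$ whose apex is that subspace and whose right leg is $v\mapsto g(v)$. Composing on the right with the appropriate evaluation span then forces $g(v)$ to lie in the image of the corresponding leg, which is either $(\id,c)$ in the odd case or the adjacent-pair inclusion $(a,b)\mapsto (a,a,b,b)$ in the even case, thereby imposing the remaining conditions $\Eq(g_1,g_4)\cap \Eq(g_2,g_3)$ or $\Eq(g_1,g_2)\cap \Eq(g_3,g_4)$, respectively. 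The resulting apex is exactly $W$, and both legs go to $0$ as required.

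The main obstacle is pictorial rather than algebraic: one must carefully read off the endpoint pairings from Figure \ref{fig:closurerational}, verifying that the parity split corresponds precisely to whether the top closure is of nested type (for $n$ odd) or adjacent type (for $n$ even), and checking that the orientations match those fixed in Section \ref{sec:generatorsdescription} so that $\cA_t$ can be applied to the building blocks as in \eqref{eqn:evandcoevspans}. Once this is settled, the rest is a direct computation of iterated pullbacks in $\Span(\Vect_\CC)$, which reduce to intersections of preimages of the appropriate diagonal subspaces of $\CC^4$ under $f$ and $g$.
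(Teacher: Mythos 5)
Your proposal is correct and follows the same route as the paper's (very terse) proof: realize the closure as pre- and post-composition with the coevaluation and evaluation spans of \eqref{eqn:evandcoevspans} (tensored appropriately according to the parity of $n$), and observe that the iterated pullbacks in $\Span(\Vect_\CC)$ amount to intersecting the preimages under $f$ and $g$ of the images of the legs $(z_1,z_2)\mapsto(z_1,z_2,z_2,z_1)$ and $(a,b)\mapsto(a,a,b,b)$, which is exactly the stated intersection of equalizers. The paper states this in one line; your explicit verification of the pullback conditions and of the parity-dependent endpoint pairings is a faithful expansion of that argument.
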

\begin{proof} 
It is a direct consequence of composition in the span category, given by the pullback span of  $\cA_t(T\otimes \id)$ and the appropiate tensor product of the evaluation and coevaluation spans given in \eqref{eqn:evandcoevspans}, depending on the parity of $n$.
\end{proof}

\begin{rem}\label{rem:equalizermatrix}
Let $\cA_t(T)$ be a basic span $\CC^4 \overset{f}{\longleftarrow}\CC^4 \overset{g}{\longrightarrow} \CC^4$, i.e. $f=\id$. Then
$$
\Eq(f_1,f_2)\cap \Eq(f_3,f_4)
= \{(a,a,b,b)\in \CC^4 \mid a,b \in \CC\}.
$$
Consequently, $W$ can be described as the kernel of a linear map 
$$
g' \colon \CC^2 \longrightarrow \CC^2.
$$
Let $G$ denote the matrix of $g$ with respect to the canonical basis of $\CC^4$. Then, a matrix representation of $g'$ is obtained from $G$ by multiplication on the left and on the right with appropriate elementary matrices. Explicitly,
$$
W = \ker(g') = \ker(PG Q),
$$
where
\begin{equation} \label{eqn:closurematrices}
P_{odd}=\begin{pmatrix}
1 & 0 & 0 & -1 \\
0 & 1 & -1 & 0
\end{pmatrix},
\qquad
P_{even}=\begin{pmatrix}
1 & -1 & 0 & 0 \\
0 & 0 & 1 & -1
\end{pmatrix},
\qquad
Q=\begin{pmatrix}
1 & 0 \\
0 & 1 \\
0 & 1 \\
1 & 0
\end{pmatrix}.
\end{equation}
\end{rem}

Propositions \ref{prop:powersf+f-}, \ref{prop:twistedtanglesformulas}, \ref{prop:twistedtanglesformulasII} and Remark \ref{rem:equalizermatrix} provide the necessary ingredients to directly compute $\cA_t(K)$ from an oriented planar projection of $K$, as the next example shows.

\begin{ex} \label{ex:62knot}

\begin{figure}[h]
\includegraphics[width=6cm]{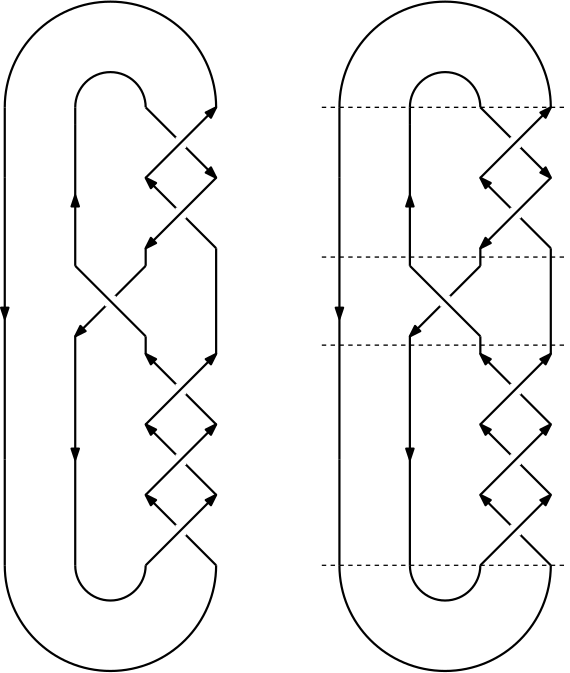}
\caption{The knot $6_2$, $[3,1,2]$} \label{fig:312}
\end{figure}
Consider the knot $K=6_2$, which is given as the closure of the rational tangle $[3,1,2]$ in Conway notation, represented as an oriented 2-bridge knot in Figure \ref{fig:312}. It may be decomposed as the composition of the following three tangles: $$T_1=\,\downarrow \otimes \, \downarrow \otimes \, X_+^3, \quad T_2= \,\downarrow \otimes \, R(X_+) \,\otimes \, \uparrow, \quad T_3=\,\downarrow \otimes \,\uparrow \otimes \, \widetilde{T}_-^2,\quad     $$
whose basic spans are respectively determined by the linear maps $\id \oplus \id \oplus f_+^3(t) $, $\id \oplus f_-(t) \oplus \id $ and $\id \oplus \id \oplus \tilde{f}_-^2(t)$. In terms of the basis given in Section \ref{sec:agl1repsandAlexanderinvariants}, these maps are respectively described by the matrices
$$
\begin{pmatrix} 1 & 0 & 0 & 0 \\ 0 & 1 & 0 & 0 \\0 & 0 & 1-t+t^2-t^3 & t-t^2+t^3 \\ 0 & 0 & 1-t+t^2 & t-t^2 \end{pmatrix}, \quad
\begin{pmatrix}
    1 & 0 & 0 & 0 \\ 0 & 0 & 1 & 0 \\ 0 & t^{-1} & 1-t^{-1} & 0 \\0 & 0 & 0 & 1
\end{pmatrix}, \quad 
\begin{pmatrix}
1 & 0 & 0 & 0 \\ 0 & 1 & 0 & 0 \\0 & 0 &2-t^{-1} & -1+t^{-1} \\0 & 0 & 1-t^{-1} & t^{-1}
\end{pmatrix},
$$
Thus the basic span of their composition is determined by the matrix
$$
G= \begin{pmatrix}
    1 & 0 & 0 & 0 \\ 0 & 0 & 1-t+t^2-t^3 & t-t^2+t^3 \\ 0 & -t^{-2}+2t^{-1} & t^{-2}-3t^{-1}+4-4t+4t^2-2t^3 & t^{-1}-3+4t-4t^{2}+2t^3 \\0 &-t^{-2}+t^{-1}& t^{-2}-2t^{-1}+3-3t+3t^2-t^3 & t^{-1}-2+3t-3t^2+t^3
\end{pmatrix}.
$$
By Remark \ref{rem:equalizermatrix}, $\cA_t(K)$ is the basic span $0\longleftarrow W\longrightarrow 0$, where $W$ is the kernel of $g'$, with associated matrix
$$
G'=\begin{pmatrix}
    -\Delta(t)& \Delta(t) \\ -\Delta(t) & \Delta(t)
\end{pmatrix}, \quad \Delta(t)=t^3-3t^2+3t-3+\frac{1}{t}.
$$
 The matrix $G'$ presents the Alexander module of $K$, and by Theorem \ref{thm-intro:TQFT-global}, yields the Alexander polynomial $\Delta(t)$ of $K$.
\end{ex}

Example \ref{ex:62knot} shows that $\cA_t$ computes the Alexander module of a knot $K$ from any planar diagram. If $K$ is a rational knot with associated fraction $p/q$, a continued-fraction expression of the form 
$$
\frac{p}{q}=[a_1,\ldots,a_n], \quad a_i\in \mathbb{Z}_{+}, \quad n \text{ odd},
$$
is said to be in \emph{canonical form}; this representation is unique \cite{kauffmanrationaltangles}. Other generalized Euclidean algorithms yield different, but again unique, continued-fraction expansions obtained by imposing certain restrictions on $a_i$. For our purposes, we recall the following expansion, which is particularly convenient for rational tangles \cite{Cromwell}:
\begin{prop}\label{prop:continuedfractioneven}
Let $p>q>0$, $\gcd(p,q)=1$, $q$ even. There exists a unique continued fraction expansion of the form
$$
\frac{p}{q}=[2k_1,2k_2,\ldots,2k_n], \quad k_i\in \ZZ_+.
$$
\end{prop}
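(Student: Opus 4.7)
I would argue by strong induction on the denominator $q$, replacing the floor step of the classical Euclidean algorithm by a \emph{nearest-even-multiple} step. The key observation driving everything is the parity hypothesis: since $\gcd(p,q)=1$ and $q$ is even, $p$ must be odd, and this parity mismatch is preserved under the recursion in a way that makes the descent well defined.

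\textbf{Existence.} Choose $k_1$ to be the integer minimizing $|p-2k_1 q|$, and set $r_1:=p-2k_1 q$. This $k_1$ is unambiguously defined: a tie would require $p/(2q)$ to be a half-integer, i.e.\ $p\equiv q\pmod{2q}$, which is ruled out because $p$ is odd and $q$ is even. Minimality gives $|r_1|<q$, and the parity mismatch between $p$ (odd) and $2k_1 q$ (even) forces $r_1$ to be odd, hence nonzero. Rewriting $p/q=2k_1+r_1/q=2k_1+1/(q/r_1)$, the fraction $q/r_1$ has $\gcd(q,|r_1|)=1$ (inherited from $\gcd(p,q)=1$), even numerator $q$, odd denominator $|r_1|$, and satisfies $|r_1|<q$. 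After absorbing the sign of $r_1$ into the forthcoming partial quotients, the inductive hypothesis applied to this smaller fraction supplies $k_2,\ldots,k_n$ with the required sign condition, and concatenating yields $p/q=[2k_1,2k_2,\ldots,2k_n]$. Termination is guaranteed by the strict descent $|r_1|<q$ on denominators.

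\textbf{Uniqueness.} Suppose $p/q=[2k_1,\ldots,2k_n]=[2k_1',\ldots,2k_m']$ are two admissible expansions. A standard property of continued fractions with the required sign condition on the partial quotients is that the tail $[2k_2,\ldots,2k_n]$ has absolute value strictly larger than $1$, so $|p/q-2k_1|<1$ and likewise $|p/q-2k_1'|<1$. Thus $|2k_1-2k_1'|<2$, forcing $k_1=k_1'$; equivalently, $k_1$ is pinned down as the unique integer identified in the existence step. Peeling off the first partial quotient then reduces uniqueness to the shorter tail continued fraction, where induction on the length $n$ (with the smaller denominator $|r_1|<q$) gives $k_i=k_i'$ for all $i$ and $n=m$.

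\textbf{Main obstacle.} The delicate part of the argument is the bookkeeping of parities and signs across the descent: one must verify that after extracting $2k_1$ and inverting, the resulting fraction really is of the form treated by the inductive hypothesis (in particular, that its denominator is even and strictly smaller than $q$), and that the sign of the odd remainder $r_1$ can always be reconciled with the admissibility condition on the subsequent $k_i$. This parity/sign bookkeeping is the principal technical hurdle; once it is carried out, strict descent on the denominator supplies termination, and the nearness-to-quotient condition supplies uniqueness.
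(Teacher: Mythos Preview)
The paper does not actually prove this proposition: it is quoted with a citation to Cromwell's \emph{Knots and Links} and used as a black box. So there is no in-paper argument to compare against; your task is to supply a self-contained proof.

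Your nearest-even-multiple descent is the standard approach and is sound in spirit, but the induction as written has a real gap. After one step you pass from $p/q$ (odd over even) to $q/r_1$, which you yourself note has \emph{even numerator and odd denominator}. You then write ``the inductive hypothesis applied to this smaller fraction supplies $k_2,\ldots,k_n$''. But the proposition, and hence your inductive hypothesis, is formulated only for fractions with \emph{even denominator}; $q/r_1$ has odd denominator $|r_1|$, so the hypothesis does not apply. You flag exactly this in your ``Main obstacle'' paragraph, but flagging it is not the same as resolving it. The clean fix is either (i) to strengthen the inductive statement to cover both parity types (odd/even and even/odd) simultaneously, or (ii) to perform a two-step descent: extract $2k_1$ and then $2k_2$ before recursing, so that you land on $r_1/r_2$ with $r_1$ odd, $r_2$ even, $\gcd(r_1,r_2)=1$ and $|r_2|<|r_1|<q$, to which the hypothesis genuinely applies. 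Either route is short, but one of them has to be written out.

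Two smaller points. First, be careful with the sign condition: the paper writes $k_i\in\ZZ_+$, but its own example $11/8=[2,-2,2,2]$ shows that negative even partial quotients occur, so read the intended condition as $k_i\in\ZZ\setminus\{0\}$. Your uniqueness argument (tail of absolute value strictly greater than $1$, hence $|2k_1-2k_1'|<2$) goes through under $|2k_i|\ge 2$, via $|[2k_j,\ldots,2k_n]|\ge |2k_j|-1/|[2k_{j+1},\ldots,2k_n]|>1$ by induction. Second, once you set up the two-step descent correctly, check that no intermediate $k_i$ can vanish; this follows from $|r_{i-1}/r_i|>1$ at each stage.
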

\begin{rem} Since the rational knot $K=b(p,q)$ and its mirror $\overline{K}=b(p,-q)$ have the same Alexander module, we may assume that $q$ is even by choosing the appropiate mirror of $K$, due to Schubert's classification. Besides, we also note that when all coefficients in the continued fraction expansion are even, the parity of $n$ determines the number of components of the link: it is a knot when $n$ is even and a 2-component link when $n$ is odd (see Figure \ref{fig:closurerational}).
\end{rem}
Let $K$ be a rational knot presented as the numerator closure of a rational tangle $T_{p/q}$. Any continued-fraction expansion of $p/q$ gives rise to a planar 2-bridge diagram for $K$. If the  expansion has even length and all coefficients are even, then every braiding block in the diagram is alternating. We may orient $K$ so that, in the tangle corresponding to the terminal coefficient $2k_{1}$, the first generator is $R^3(X_\pm)$. With this choice, the first twist in each subsequent box associated to $2k_i$ alternates between $R^3(X_\pm)$ and $R(X_\pm)$ as $i$ increases. As is customary for planar diagrams of rational knots, when $2k_i>0$, (unoriented) twists are positive for even $i$ and negative for odd $i$; the signs are reversed when $2k_i<0$. These conventions lead to the following proposition:

\begin{prop}\label{prop:closedformulaAlex}
Let $K$ be a rational knot $b(p,q)$ and let us consider the even continued fraction expansion $\frac{p}{q}=[2k_1,\ldots,2k_{2n-1},2k_{2n}]$, where $p>q>0$, $\gcd(p,q)=1$, $q$ even. Then $\cA_t(K)$ is the 2-bridge equalizer of the linear map
$$
(1\oplus \tilde{f}^{2k_{n}}_{s_{n}}(t) \oplus 1)\circ (1 \oplus 1 \oplus \tilde{f}^{2k_{n-1}}_{-s_{n-1}}(t^{-1}))\circ \ldots (1 \oplus \tilde{f}^{2k_{n-1}}_{s_{2}}(t) \oplus 1) \circ (1 \oplus 1 \oplus \tilde{f}^{2k_{1}}_{-s_{1}}(t^{-1})),
$$
where $s_i=\sign (k_{i})$

\end{prop}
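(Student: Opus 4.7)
The plan is to proceed by induction on the length $2n$ of the even continued fraction expansion, translating the combinatorics of the expansion into a tangle decomposition and then applying the functor $\cA_t$ block by block. Starting from the canonical $3$-strand planar diagram of the rational tangle $T_{p/q}$ associated with $[2k_1,\ldots,2k_{2n}]$, the terminal coefficient $2k_1$ corresponds to a twisting box attached at the bottom of the diagram. By the orientation convention introduced just before the statement, this box contains an alternating braiding whose first generator is $R^3(X_{\pm})$, i.e.\ it is a tangle of the form $\widetilde{T}^{2k_1}_{\pm}$ (with sign dictated by $s_1$) acting on the lower two strands of the $3$-strand braid (tensored with a trivial strand to reach four endpoints). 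The crucial observation is that because $R^3(X_{\pm})$ enters first, Propositions \ref{prop:twistedtanglesformulas} and \ref{prop:twistedtanglesformulasII} imply that the span of this block is the basic span associated with $\tilde{f}^{2k_1}_{-s_1}(t^{-1})$ rather than $\tilde{f}^{2k_1}_{s_1}(t)$; this is the origin of the evaluation at $t^{-1}$ in the first factor of the composition.

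Next I would handle the inductive step: once the block $2k_i$ has been applied, the subsequent box $2k_{i+1}$ sits adjacent to it in the planar diagram, but the alternation between $R(X_{\pm})$ and $R^3(X_{\pm})$ as the first generator, together with the fact that consecutive coefficients act on different pairs of strands (middle two vs.\ outer two, modulo the trivial strand), forces the decomposition
\[
\bigl(\id\oplus\tilde f^{2k_{i+1}}_{s_{i+1}}(t)\oplus\id\bigr)\quad\text{or}\quad\bigl(\id\oplus\id\oplus\tilde f^{2k_{i+1}}_{-s_{i+1}}(t^{-1})\bigr)
\]
according to the parity of $i+1$. This pattern is precisely the alternation in the statement, where odd-indexed boxes contribute $\id\oplus\id\oplus\tilde f_{-s}(t^{-1})$ and even-indexed boxes contribute $\id\oplus\tilde f_{s}(t)\oplus\id$. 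Functoriality of $\cA_t$ with respect to composition, together with the fact that $\cA_t$ sends monoidal products to direct sums, then gives the basic span of $T_{p/q}\otimes\id$ as the composition of these $2n$ maps.

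For the closure step, the numerator closure that produces $K=b(p,q)$ is, by the conventions of Figure \ref{fig:closurerational}, determined by the parity of $n$; since the expansion has even length $2n$ and all $2k_i$ are even, the relevant closure is the one encoded in Lemma \ref{lem:2bridgeequalizer}. Combining the two legs of the resulting span with the evaluation and coevaluation spans in \eqref{eqn:evandcoevspans} directly yields that $\cA_t(K)$ is the $2$-bridge equalizer of the composed map, completing the argument.

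The main obstacle I anticipate is the orientation bookkeeping that underlies the alternation between evaluation at $t$ and at $t^{-1}$, and between signs $s_i$ and $-s_i$, as $i$ varies. The positions of the twist boxes in a $3$-strand planar diagram alternate, and the induced orientations propagate through the adjacent boxes; one must verify carefully that, with the choice of first generator $R^3(X_{\pm})$ in box $1$, the first generator of box $i$ is $R^{3}(X_{\pm})$ for $i$ odd and $R(X_{\pm})$ for $i$ even, and that the sign convention (positive twists for even $i$, negative for odd $i$, when all $k_i>0$) exactly matches the subscripts in $\tilde f_{\pm s_i}$ and the choice of argument $t$ or $t^{-1}$ dictated by Propositions \ref{prop:twistedtanglesformulas} and \ref{prop:twistedtanglesformulasII}. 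Once this diagrammatic bookkeeping is settled, the algebraic statement follows automatically from functoriality and Lemma \ref{lem:2bridgeequalizer}.
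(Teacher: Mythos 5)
Your proposal is correct and follows essentially the same route as the paper, which states this proposition without a separate proof, deriving it directly from the orientation conventions fixed in the preceding paragraph (first generator $R^3(X_\pm)$ in the terminal box, alternating thereafter, with the sign convention for odd versus even boxes), Propositions \ref{prop:twistedtanglesformulas} and \ref{prop:twistedtanglesformulasII} with the corollary on reversed braidings, functoriality and monoidality of $\cA_t$, and Lemma \ref{lem:2bridgeequalizer} for the closure. The ``bookkeeping'' you flag as the remaining obstacle is exactly what the paper resolves by convention rather than by argument, so your sketch matches the intended justification.
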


\begin{ex} \label{ex:3/11from2bridge}
We revisit Example \ref{ex:62knot} and apply Proposition \ref{prop:closedformulaAlex}. The rational knot $6_2$ is the 2-bridge knot $b(11,3)$, with mirror $b(11,8)$. If we apply Proposition \ref{prop:continuedfractioneven}, we get the following continued fraction expansion: 
$$
11/8=[2,-2,2,2].
$$
Therefore, $\cA_t(K)$ is the 2-bridge closure of the map 
$$
(1\oplus \tilde{f}^2_+(t) \oplus 1) \circ (1\oplus 1\oplus \tilde{f}^2_-(t^{-1}))\circ (1 \oplus \tilde{f}^2_-(t)\oplus 1) \circ (1 \oplus 1 \oplus \tilde{f}^2_-(t^{-1})),
$$
whose associated matrix is
$$
G=\begin{pmatrix}
    1 & 0 & 0 & 0 \\ 0 & t^{-2}-3t^{-1}+4-t & -2t^{-2}+6t^{-1}-7+4t-t^2 & t^{-2}-3t^{-1}+4-3t+t^2 \\
    0 & t^{-2}-4t^{-1}+5-2t  & -2t^{-2}+8t^{-1}-9+6t-2t^2 & t^{-2} -4t^{-1}+5-4t+2t^{2}\\ 0 & -t^{-1}+2-t & 2t^{-1}-3+2t-t^2 &  -t^{-1}+2-t+t^2
\end{pmatrix}.
$$
Taking the appropiate closure considering that $n$ is even, $A_t(K)$ is the span $0\longleftarrow Z\longrightarrow0$, where $Z=\ker G'$, 
$$
G'=\begin{pmatrix}
\Delta(t) & -\Delta(t) \\ -\Delta(t) & \Delta(t)
\end{pmatrix}, \qquad \Delta(t)=-t^{2}+3t -3+3t^{-1}-t^{-2}.
$$
matching Example \ref{ex:62knot}.
\end{ex}

 Proposition \ref{prop:closedformulaAlex} may be used to obtain a recursive formula for the Alexander polynomial of any rational knot from the description of $\cA_t(K)$. We consider the even continued-fraction expansion $p/q=[2k_1,\ldots,2k_{2n}]$ under the previous assumptions, and we use the standard basis for the 3-strand braid model of the rational tangle $T_{p/q}$, following the conventions set in Section \ref{sec:agl1repsandAlexanderinvariants}.  Let us write
$$
A_{2i}=\diag(1,A'_{2i},1), \qquad A_{2i-1}=\diag(1,1,A'_{2i-1}), \quad H=\begin{pmatrix}
    1 & -1 \\ 1 & -1
\end{pmatrix},
$$
where 
\begin{align*}
& A'_{2i-1}  =\begin{pmatrix}
    1+k_{2i-1}(1-t) & -k_{2i-1}(1-t) \\ k_{2i-1}(1-t) & 1-k_{2i-1}(1-t) \end{pmatrix}=\id +k'_{2i-1}H, \qquad  &k'_{2i-1} :=k_{2i-1}(1-t), \\
& A'_{2i}  =\begin{pmatrix}1-k_{2i}(1-t^{-1}) &  k_{2i}(1-t^{-1})\\ -k_{2i}(1-t^{-1}) & 1+k_{2i}(1-t^{-1})
\end{pmatrix}=\id+k'_{2i}H, \qquad & k'_{2i} :=-k_{2i}(1-t^{-1}),
\end{align*}

where we also recall that $h$ is the nilpotent map defined in Proposition \ref{prop:twistedtanglesformulas}. Let us also write $P,Q$ for the matrices from \eqref{eqn:closurematrices} that correspond to the closure of the rational tangle T. Proposition \ref{prop:closedformulaAlex} establishes that $\cA_t(K)$ is given by the span
$$
\xymatrix{
& \ker N \ar[ld] \ar[rd]& \\
0 & & 0
}
$$
where
\begin{equation}\label{eqn:PA_iQ}
N=PA_{2n}A_{2n-1}\ldots A_{2}A_{1}Q,
\end{equation}
We shall obtain an explicit description in terms of continuants of the Alexander polynomial of $T$. We recall their definition:

\begin{defn}
Let
$$
T_n = \begin{pmatrix}
a_{1} & u_{1} & 0 & \cdots & 0 \\
l_{2} & a_{2} & u_{2} & \ddots & \vdots \\
0 & l_{2} & a_{3} & \ddots & 0 \\
\vdots & \ddots & \ddots & \ddots & u_{n-1} \\
0 & \cdots & 0 & l_{n} & a_{n}
\end{pmatrix}
$$
be a tridiagonal matrix with complex coefficients. The \emph{continuant} associated with $T$, denoted
\[
K_{n}(a_{1},\dots,a_{n}; l_{2},\dots,l_{n}; u_{1},\dots,u_{n-1}),
\]
is the determinant of $T_n$, and can be recursively defined by $K_{0} = 1,\; K_{1}(a_{1}) = a_{1}$, and for $n \geq 2$,
\begin{equation}\label{eqn:recursioncontinuant}
K_{n}
= a_{n}\,K_{n-1} - l_{n}u_{n-1}\,K_{n-2}.
\end{equation}
\end{defn}
In this section, we will restrict to the case where $u_i=l_i=1$ for all $i$, first studied by Euler and classically connected with continued fractions;  we will write $K_n:=K_n(a_1,\ldots,a_n)$.

\begin{thm}
Let $K$ be a rational knot such that $p/q=[2k_1,\ldots,2k_{2n}]$. Then
\begin{equation} \label{eqn:ContinuantRationalAlexformula}
\Delta_K(t)=K_{2n}(k_1(1-t),-k_2(1-t^{-1}),\ldots,k_{2n-1}(1-t),-k_{2n}(1-t^{-1}))
\end{equation}
\end{thm}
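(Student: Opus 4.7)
Proposition~\ref{prop:closedformulaAlex} together with Remark~\ref{rem:equalizermatrix} exhibits the Alexander module of $K$ as the kernel of the $2\times 2$ matrix $N = P_{even}\,A_{2n}\cdots A_1\,Q$, and (as in Example~\ref{ex:3/11from2bridge}) its four entries all agree up to sign, so the Alexander polynomial is any one of them, up to units. The plan is to collapse the $4\times 4$ matrix product $B_{2n}Q = A_{2n}\cdots A_1 Q$ onto a scalar recursion identifiable with the continuant \eqref{eqn:recursioncontinuant}.

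First, I would track the rows of $B_m Q$, writing them as four covectors $p_m, q_m, r_m, s_m \in (\CC^2)^{\ast}$, starting from $p_0 = e_1^{\ast}$, $q_0 = e_2^{\ast}$, $r_0 = e_2^{\ast}$, $s_0 = e_1^{\ast}$ read off from $Q$. From the explicit forms of $A_{2i-1}$ and $A_{2i}$ and from the identity $H(x,x)^T = 0$, the covector $p_m$ stays constant, $A_{2i-1}$ adds a common multiple of $r_{2i-2} - s_{2i-2}$ to both $r$ and $s$ (leaving $p,q$ unchanged), and $A_{2i}$ adds a common multiple of $q_{2i-1} - r_{2i-1}$ to both $q$ and $r$ (leaving $p,s$ unchanged). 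Setting $u_m := q_m - r_m$ and $v_m := r_m - s_m$, these rules give
\[
A_{2i-1}\colon\ u\mapsto u - k'_{2i-1}v,\ v\mapsto v; \qquad A_{2i}\colon\ u\mapsto u,\ v\mapsto v + k'_{2i}u.
\]
Since $u_0 = 0$ and $v_0 = e_2^{\ast} - e_1^{\ast}$ both lie in the one-dimensional subspace spanned by $e_1^{\ast} - e_2^{\ast}$, which is stable under the above recursion, the problem reduces to a scalar recursion on $(U_m, V_m)$ defined by $u_m = U_m(e_1^{\ast} - e_2^{\ast})$, $v_m = V_m(e_1^{\ast} - e_2^{\ast})$, with initial condition $(U_0, V_0) = (0, -1)$ and transition matrices
\[
M_{2i-1} = \begin{pmatrix} 1 & -k'_{2i-1} \\ 0 & 1 \end{pmatrix}, \qquad M_{2i} = \begin{pmatrix} 1 & 0 \\ k'_{2i} & 1 \end{pmatrix}.
\]

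The alternating upper/lower triangular factorization is exactly the one underlying continued fractions, so an induction on $m$ using the continuant recursion $K_m = k'_m K_{m-1} - K_{m-2}$ with $K_{-1} = 0$, $K_0 = 1$ yields $(U_m, V_m) = \bigl(K_{m-1}(k'_1,\ldots,k'_{m-1}),\, K_m(k'_1,\ldots, k'_m)\bigr)$ up to a uniform sign. An analogous computation for the first row of $N$ (tracking $W_m := $ the $(e_1^{\ast}-e_2^{\ast})$-coefficient of $p_m - q_m$) gives $W_0 = 1$ and the conservation law $W_m + V_m \equiv 0$, confirming that both rows of $N$ are multiples of $(1,-1)$ with coefficient $\pm V_{2n}$. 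Therefore $\Delta_K(t) = V_{2n} = \pm K_{2n}(k'_1,\ldots,k'_{2n})$ up to units, and substituting $k'_{2i-1} = k_{2i-1}(1-t)$, $k'_{2i} = -k_{2i}(1-t^{-1})$ yields the stated formula.

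The conceptual crux is the observation that $u_m$ and $v_m$ remain inside a single one-dimensional subspace throughout the recursion; this collapses the $4\times 4$ matrix product onto a $2\times 2$ continuant recurrence. The main obstacle is clerical, namely keeping careful track of signs in the inductive identification of $V_m$ with $K_m$ and in the comparison between both rows of $N$; fortunately, any such sign discrepancies are absorbed by the units $\pm t^j$ inherent to the definition of the Alexander polynomial.
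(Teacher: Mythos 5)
Your proposal follows essentially the same route as the paper's proof: the paper tracks the two columns of $A_{2n}\cdots A_1 Q$ via the scalar differences $x_i=b_i-c_i$, $y_i=c_i-d_i$ (the transpose of your covector reduction $u_m=q_m-r_m$, $v_m=r_m-s_m$), obtains the identical alternating unitriangular recursion, identifies the even-index values with continuants, and uses the same conservation law ($x_i+d_i$ constant, your $W_m+V_m\equiv 0$) to show both rows of $N$ are multiples of $(1,-1)$ by $\pm K_{2n}$. The one inaccuracy is your single-step induction hypothesis $(U_m,V_m)=\pm\bigl(K_{m-1},K_m\bigr)$: it fails at odd $m$, since already $(U_1,V_1)=(k'_1,-1)=(K_1,-K_0)$, so the two continuants trade places rather than merely changing sign; the paper avoids this by running a two-step induction establishing $x_{2m}=(-1)^mK_{2m-1}$, $y_{2m}=(-1)^mK_{2m}$ on even indices only, which is all the final formula requires and is the straightforward repair of your argument.
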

\begin{proof}
Let us write $(a_i,b_i,c_i,d_i)=A_i A_{i-1}\ldots A_1(a_0,b_0,c_0,d_0)$. Also, we set
$$
x_i:=b_i-c_i, \qquad y_i:=c_i-d_i.
$$
Note that $a_i=a_0$ is constant for all $i$, so it suffices to obtain a recursive formula for $x_i,y_i,d_i$, as all the information is contained in the lower $3\times 3$-block of $A_i$. We note that, given the columns of $Q$, we may set the initial conditions $(b_0,c_0,d_0)=(0,0,1)$ and $(b_0,c_0,d_0)=(1,1,0)$, that is,
$$
(x_0^{(1)},y_0^{(1)},d_0^{(1)})=(0,-1,1), \quad (x_0^{(2)},y_0^{(2)},d_0^{(2)})=(0,1,0),
$$
and accordingly $(x_i^{(l)},y_i^{(l)},d_i^{(1)}),\; l=1,2,$ so that the superscript denotes the two possible initial conditions, ommited where needed to ease the notation.

Let us write $T_i=A_{2i}A_{2i-1}$. 
Given the definitions of $A_{2i-1}$ and $A_{2i}$, it can be checked that
$$
A_{2i-1}(x,y,d)=(x-k'_{2i-1}y,y,k'_{2i-1}y+d), \quad A_{2i}(x,y,d)=(x,k'_{2i}x+y,d),
$$
so that 
$$
T_i(x,y,d)=(x-k'_{2i-1}y, k'_{2i}x+(1-k'_{2i-1}k'_{2i})y, k'_{2i-1}y+d) .
$$
Note that $x_i+d_i$ are invariant by $T_i$. Let us show by induction that
\begin{equation}\label{eqn:xmymrecursion}
x_{2m}=(-1)^mK_{2m-1}(k_1',\ldots,k'_{2m-1}), \quad y_{2m}=(-1)^{m}K_{2m}(k_1',\ldots,k'_{2m}).
\end{equation}
It is clear that $x_0=0=K_{-1}$, $y_0=\mp1=\mp K_0$, assume that \eqref{eqn:xmymrecursion} holds for $2m\in \mathbb{N}$. First,
\begin{align*}
y_{2m+2} & = k'_{2m+2}x_{2m}+(1-k'_{2m+1}k'_{2m+2})y_{2m} = k'_{2m+2}(-1)^m K_{2m-1}+(1-k'_{2m+1}k'_{2m+2})(-1)^mK_{2m} \\& = (-1)^{m+1}\left( (k'_{2m+1}k'_{2m+2}-1)K_{2m}-k_{2m+2}'K_{2m-1} \right)  = (-1)^{m+1}K_{2m+2},
\end{align*}
given that two-step continuant identity $K_{2m+2}=(k'_{2m+2}k'_{2m+1}-1)K_{2m}-k'_{2m+2}K_{2m-1}$ from \eqref{eqn:recursioncontinuant} . Similarly,
\begin{align*}
x_{2m+2} & = x_{2m}-k'_{2m+1}y_{2m}= (-1)^mK_{2m-1}-k'_{2m+1}(-1)^mK_{2m} \\ & = (-1)^{m+1}\left(k_{2m+1}'K_{2m}-K_{2m-1}  \right) = (-1)^{m+1}K_{2m+1}.
\end{align*}
Now, a direct consequence of \eqref{eqn:PA_iQ} is that
$$
N=\begin{pmatrix} 
    a^{(1)}_{2n}-b_{2n}^{(1)} & 
    a^{(2)}_{2n}-b_{2n}^{(2)} \\y_{2n}^{(1)} & y_{2n}^{(2)} \end{pmatrix}.
$$
We note that $a_i$ and $x_i+d_i$ are invariant by $T_i$. Hence
\begin{align*}
a_{2n}^{(1)}-b_{2n}^{(1)}& =1-b_{2n}^{(1)}=-c_{2n}^{(1)}+d_{2n}^{(1)}=-y_{2n}^{(1)},  \\
a_{2n}^{(2)}-b_{2n}^{(2)}& =-b_{2n}^{(2)}=-c_{2n}^{(2)}+d_{2n}^{(2)}=-y_{2n}^{(2)}.
\end{align*}
Note that $y_i^{(2)}=-y_i^{(1)}$, as $y_0^{(2)}=1=-y_0^{(1)}$. Therefore
$$
N=(-1)^nK_{2n}(k'_1,k'_2,\ldots,k'_{2n-1},k'_{2n})\begin{pmatrix}
 1 & -1\\ -1 & 1 \end{pmatrix},
$$
Since $\cA_t(K)$ is the span determined by $\ker N$, the factor $K_{2n}(k_1',\ldots,k_{2n}')$ is (up to normalization) the Alexander polynomial of $K$.
\end{proof}
In terms of quantum integers, we may restate the theorem as follows.
\begin{cor}
Let $K$ be a rational knot such that $p/q=[2k_1,\ldots,2k_{2n}]$. Then
\begin{equation} \label{eqn:ContinuantRationalAlexformula}
\Delta_K(t)=K_{2n}([2]_{-t}k_1,-[2]_{-t^{-1}}k_2,\ldots,[2]_{-t}k_{2n-1},-[2]_{-t^{-1}}k_{2n}).
\end{equation}
\end{cor}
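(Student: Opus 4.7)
The plan is to obtain this corollary directly from the theorem that immediately precedes it, by rewriting each entry of the continuant in terms of quantum integers. Since the theorem establishes
\[
\Delta_K(t)=K_{2n}\bigl(k_1(1-t),\,-k_2(1-t^{-1}),\,\ldots,\,k_{2n-1}(1-t),\,-k_{2n}(1-t^{-1})\bigr),
\]
it suffices to show that $k_i(1-t)=[2]_{-t}\,k_i$ and $-k_i(1-t^{-1})=-[2]_{-t^{-1}}\,k_i$ for every index $i$.

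First I would simply evaluate the quantum integer at $n=2$ from the definition recalled at the end of Section \ref{sec:agl1repsandAlexanderinvariants}. Since
\[
[n]_{s}=\frac{1-s^{n}}{1-s}=1+s+s^{2}+\cdots+s^{n-1},
\]
substituting $s=-t$ gives $[2]_{-t}=1+(-t)=1-t$, and substituting $s=-t^{-1}$ gives $[2]_{-t^{-1}}=1-t^{-1}$. Consequently $[2]_{-t}\,k_{2i-1}=(1-t)\,k_{2i-1}$ and $-[2]_{-t^{-1}}\,k_{2i}=-(1-t^{-1})\,k_{2i}$, which match exactly the entries appearing in the previous theorem.

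Plugging these equalities into the continuant, which depends only on its diagonal entries (since the off-diagonal entries are all equal to $1$ in our case), yields
\[
K_{2n}\bigl(k_1(1-t),\,-k_2(1-t^{-1}),\,\ldots\bigr)=K_{2n}\bigl([2]_{-t}k_1,\,-[2]_{-t^{-1}}k_2,\,\ldots,\,[2]_{-t}k_{2n-1},\,-[2]_{-t^{-1}}k_{2n}\bigr),
\]
which is the desired formula. There is no genuine obstacle here: the corollary is a cosmetic reformulation whose content lies entirely in the preceding theorem, and its only purpose is to display the Alexander polynomial of a rational knot in the quantum-integer language that will be used systematically in Section \ref{sec:pretzelknots}.
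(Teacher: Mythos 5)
Your proposal is correct and matches the paper's (implicit) reasoning: the paper offers no separate proof because the corollary is exactly the preceding theorem with $1-t=[2]_{-t}$ and $1-t^{-1}=[2]_{-t^{-1}}$ substituted entrywise, which is precisely the computation you carry out.
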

\begin{ex}
We consider again $11/8=[2,-2,2,2]$. It is an immediate check that
$$
K_4(1-t,1-t^{-1},1-t,-1+t^{-1})= -t^2+3t-3+3t^{-1}-t^{-2}=\Delta_K(t),
$$
matching Example \ref{ex:3/11from2bridge}.
\end{ex}
\begin{rem}
We note that our formula can also be obtained using a Seifert surface for $K$ \cite{LyubichMurasugi}. Several alternative closed expressions for the Alexander polynomials of rational knots appear in the literature. These include formulas derived from number-theorical functions in $p$ and $q$ \cite{Fukuhara}, as well as recursive constructions obtained from the skein relation as in \cite{Cromwell}, which yield a similar formula for the Conway polynomial. Given the relation between continued fractions and continuants, the latter can be expressed as a continuant; see \cite{KoseleffPecker} for this reinterpretation and for its relationship with  Fibonacci polynomials.
\end{rem}

\section{Pretzel knots} \label{sec:pretzelknots}

Pretzel knots are a well-studied family of knots where Propositions \ref{prop:powersf+f-}, \ref{prop:twistedtanglesformulas}, \ref{prop:twistedtanglesformulasII} may be directly applied to determine their Alexander modules. A \emph{Pretzel link} $P(q_1,\ldots,q_n)$ is a link that is obtained by cyclically connecting $n$ twisted bands, where each one has $q_i$ half twists. A Pretzel link is a knot if and only if \cite{Kawauchibook}:
\begin{enumerate}
    \item One $q_i$ is even, $q_j$ is odd for $j\neq i$.
    \item All $q_i$ are odd, $n$ is odd.
\end{enumerate}
As usual, the sign of $q_i$ determines if the unoriented twist is either positive or negative. Choosing an orientation for $K$, the following proposition lists the possible decompositions of $K$ as a tangle in terms of codirected braids or alternating braidings.
\begin{figure}[h]
  \centering
  \includegraphics[width=0.31\textwidth]{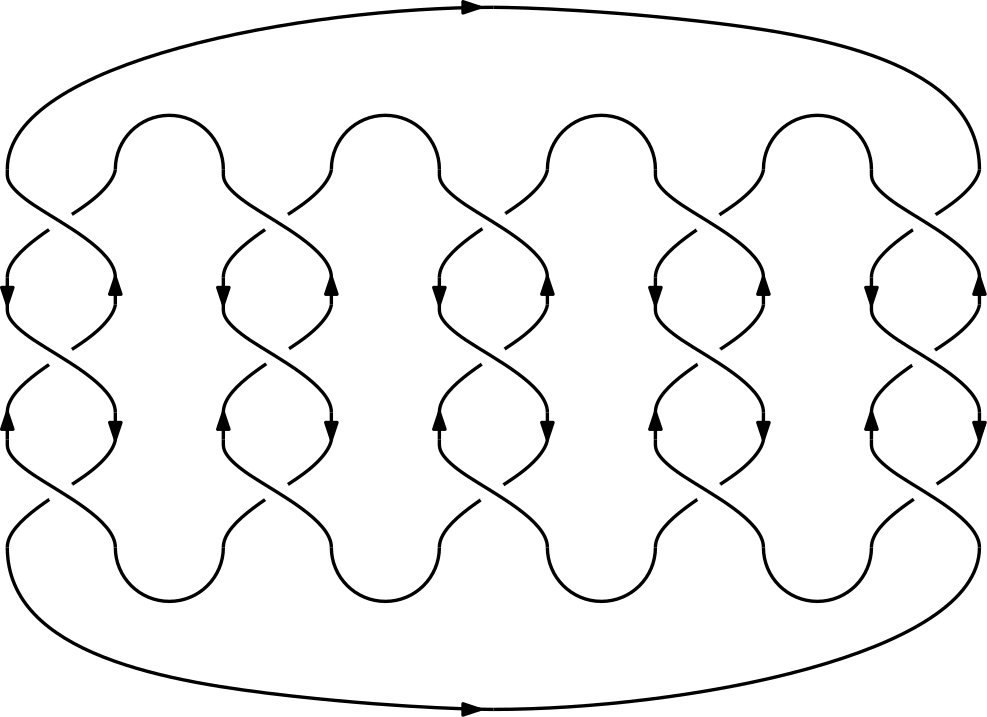}\hfill
  \includegraphics[width=0.31\textwidth]{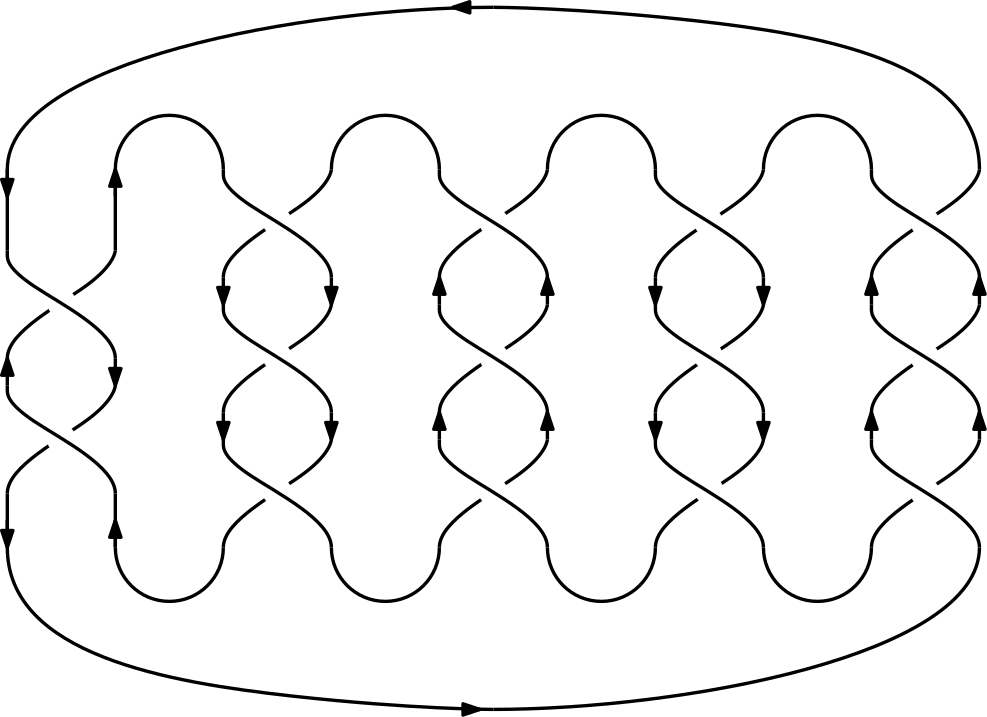}\hfill
  \includegraphics[width=0.31\textwidth]{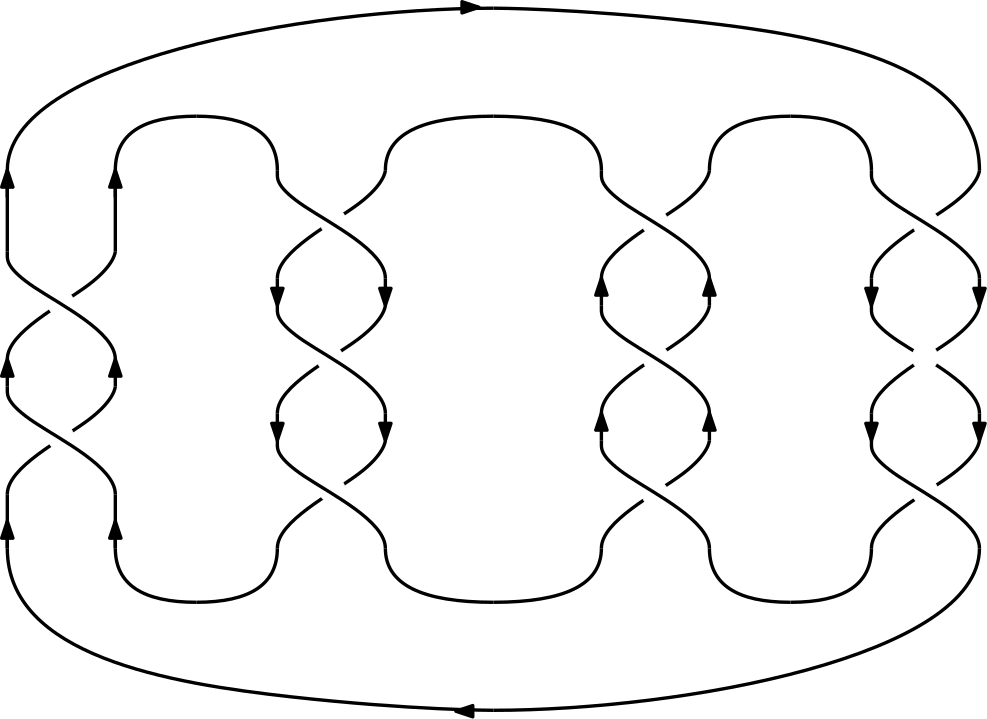}
  \caption{Oriented odd and even pretzel knots.}
  \label{fig:pretzelknot}
\end{figure}
\begin{prop}\label{prop:pretzeldecomp}
Let $K=P(q_1,\ldots,q_n)$ be an oriented pretzel knot.
\begin{itemize}
    \item If $q_i$ and $n$ are odd, then all twists are alternating braidings.   
    \item If $q_1$ is even and $n$ is even, all twists are codirected braids.
    \item If $q_1$ is even and $n$ is odd, all twists are codirected braids except for $q_1$, which is an alternating braiding.
\end{itemize}
\end{prop}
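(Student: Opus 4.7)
The plan is to prove the statement by tracing the knot through the standard planar diagram of $P(q_1,\ldots,q_n)$ and analyzing the orientations of the two strands in each twist band. The argument rests on two elementary observations. First, inside a twist band with $q_i$ half-twists, a strand that enters monotonically from the top exits at the bottom: if $q_i$ is even it exits on the same vertical side, whereas if $q_i$ is odd it exits on the opposite vertical side; in both cases the vertical direction of motion is preserved. Second, every U-turn arc (either a local arc connecting two adjacent bands at the top or bottom, or the global closure arcs above and below) reverses the vertical direction of the strand.

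I would then fix a starting strand, say the one entering the top-left of band $1$ going downward, and trace the unique path along the knot. This generates a sequence of band visits in which the vertical direction flips each time an arc is crossed, so that the parity of the position of a visit in the sequence determines its direction. Since the pretzel under consideration is a single-component knot, each of the $n$ bands is visited exactly twice. The two visits to band $i$ are codirected if and only if their positions in the sequence have the same parity, equivalently if and only if the number of arcs traversed between them is even.

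The three cases are then handled by writing out the sequence of visits explicitly using the stated parities of the $q_i$. When $n$ and all $q_i$ are odd, every band is crossed diagonally, so the bands appear in the cyclic order $1,2,\ldots,n,1,2,\ldots,n$; the two visits to band $i$ are separated by $n$ positions, which is odd, and every band is alternating. When $q_1$ is even and $q_i$ are odd for $i\geq 2$, the trace starts at band $1$ (crossed vertically), reaches band $n$ via the closure arc, traverses the odd-twist bands $n,n-1,\ldots,2$ in a zig-zag, returns to band $1$, and finally traverses $2,3,\ldots,n$ to close up. A direct count shows that the gap between the two visits to band $i\geq 2$ equals $2(i-1)$, always even, while the gap for band $1$ equals $n$. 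For $n$ even all gaps are even, so all bands are codirected; for $n$ odd only band $1$ has an odd gap, hence becomes alternating while all other bands remain codirected.

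The main obstacle is notational rather than conceptual: one has to set up unambiguous labels for the endpoints of each box and for the arcs, and verify the zig-zag traversal pattern, but this is routine once conventions are fixed. After that, the three parity counts conclude each case immediately.
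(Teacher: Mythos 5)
Your proposal is correct and follows essentially the same route as the paper: decompose the pretzel knot as the cyclic closure of a tensor product of twist bands and track the chosen orientation through the diagram, using the parity of each $q_i$ to determine whether a strand crosses its band vertically or diagonally. Your version simply makes the paper's sketch explicit via the gap-parity count of band visits, and the three case analyses check out.
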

\begin{proof}
Any pretzel knot is the closure of a pretzel $2n$-tangle $T$, which is obtained as the tensor product of $n$ 2-tangles, each of which is a power of the unoriented generators $[X_+],[X_-]$. When oriented, each 2-tangle is either an alternating or codirected braiding.
Interpreting these 2-tangles as a morphisms in $\cT$, the parity of $q_i$ determines the sign of the endpoints and whether the strand is oriented upwards or downwards, according to the conventions in Section \ref{sec:agl1repsandAlexanderinvariants}. Choosing an orientation for $K$ and tracking it through the cyclic closure of $T$ yields the statement of the proposition.
\end{proof}

\begin{prop}\label{prop:pretzelspans}
Let $K = P(q_1,\ldots,q_n)$ be a pretzel knot. Then the associated span $\cA_t(K)$ is given by  
$$
0 \;\longleftarrow\; W \;\longrightarrow\; 0,
$$  
where $W$ is the kernel of the linear endomorphism  
$$
t_2 \circ D \circ t_1 : \CC^n \longrightarrow \CC^n,
$$
defined as follows: 
\begin{align*}
D   : \CC^{2n} \longrightarrow \CC^{2n}, &\quad 
    D   = \diag(f^{q_1}_{a_1}, \ldots, f^{q_n}_{a_n}), \\
t_1 : \CC^{n}  \longrightarrow \CC^{2n}, &\quad 
    t_ 1(z_1,\ldots,z_{n}) = (z_1,z_2,z_2,z_3,z_3,\ldots,z_{2n},z_1), \\
t_2 : \CC^{2n} \longrightarrow \CC^{n},  &\quad 
    t_2(z_1,\ldots,z_{2n}) = (-z_1+z_{2n},z_2-z_3,z_4-z_5,\ldots,z_{2n-2}-z_{2n-1}).
\end{align*}

Here, for each $i=1,\ldots,n$, the block $f^{q_i}_{a_i}$ (or its alternating counterpart $\tilde{f}^{q_i}_{a_i}$) is chosen according to the tangle decomposition specified in Proposition \ref{prop:pretzeldecomp}. 
\end{prop}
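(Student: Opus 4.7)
The strategy is to build $\cA_t(K)$ by composing, in $\Span(\Vect_{\CC})$, the three pieces into which a pretzel knot decomposes: the top cyclic coevaluation, the horizontal tensor product of twist tangles, and the bottom cyclic evaluation. By Proposition~\ref{prop:pretzeldecomp}, $K$ is the cyclic closure of the $2n$-tangle $T := T_{q_1}\otimes\cdots\otimes T_{q_n}$, where each $T_{q_i}$ is one of the twist blocks treated in Propositions~\ref{prop:powersf+f-},~\ref{prop:twistedtanglesformulas} and~\ref{prop:twistedtanglesformulasII}. By monoidality of $\cA_t$, the span $\cA_t(T)$ is the basic span on $\CC^{2n}$ determined by the block-diagonal map $D=\diag(f^{q_1}_{a_1},\ldots,f^{q_n}_{a_n})$.

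Next I would model the top cyclic closure as $n$ cups pairing the boundary points $(2i,2i+1)$ for $i=1,\ldots,n-1$ together with the wraparound pair $(1,2n)$, and the bottom closure by $n$ caps paired identically. By Equation~\eqref{eqn:evandcoevspans}, each cup contributes a copy of $\CC$ mapped diagonally into the two paired coordinates of $\CC^{2n}$; tensoring and reindexing the $n$ cups produces the span
$$
0 \longleftarrow \CC^n \xrightarrow{\;t_1\;} \CC^{2n},
$$
with $t_1$ the diagonal embedding of the statement. By the same argument, and since $\cA_t(\capleft)=\cA_t(\capright)$ and $\cA_t(\cupleft)=\cA_t(\cupright)$, the bottom closure yields the symmetric span $\CC^{2n} \xleftarrow{\;t_1\;} \CC^n \longrightarrow 0$.

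Composition in $\Span(\Vect_{\CC})$ is then realized by two successive pullbacks. The pullback of the top coevaluation with $\cA_t(T)$ is trivial because the left leg of $\cA_t(T)$ is the identity, producing $0\leftarrow \CC^n \xrightarrow{D\circ t_1}\CC^{2n}$. Pulling back with the bottom evaluation yields the fibre product
$$
\{(z,y)\in \CC^n\times\CC^n \mid (D\circ t_1)(z) = t_1(y)\}.
$$
Since $t_1$ is injective, projection onto the first factor identifies this with $\{z\in\CC^n : (D\circ t_1)(z)\in \img t_1\}$. A direct computation shows $\img t_1 = \ker t_2$: both subspaces of $\CC^{2n}$ are cut out by the equalities of paired coordinates, $t_2$ expressing each as a difference ($z_{2i}-z_{2i+1}$ for the inner pairs and $-z_1+z_{2n}$ for the outer one). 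Hence the apex of the composite span is $W = \ker(t_2\circ D\circ t_1)$, and both legs vanish, as claimed.

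\textbf{Main obstacle.} The delicate point is keeping track of orientations along the cyclic closure, since Proposition~\ref{prop:pretzeldecomp} allows orientation patterns that mix codirected and alternating blocks depending on the parities of $n$ and the $q_i$; this choice dictates whether $f^{q_i}_{\pm}$ or $\tilde f^{q_i}_{\pm}$ appears in the block $D$. However, because $\cA_t$ is independent of the orientation of cups and caps, the closure maps $t_1,t_2$ remain the same across the three cases of the proposition. All orientation dependence is thus absorbed into $D$, which is what justifies the uniform statement.
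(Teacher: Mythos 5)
Your proposal is correct and follows essentially the same route as the paper: decompose $K$ as $C_2\circ T\circ C_1$, use monoidality of $\cA_t$ to obtain the block-diagonal map $D$, compute the spans of the cyclic closures, and identify the apex of the composite using $\img t_1=\ker t_2$. If anything, your explicit fibre-product computation $\{(z,y): (D\circ t_1)(z)=t_1(y)\}\cong (D\circ t_1)^{-1}(\img t_1)=\ker(t_2\circ D\circ t_1)$ is slightly more careful than the paper's phrasing, which describes the composite apex as $\Eq(D\circ t_1,t_1)$ before arriving at the same kernel.
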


\begin{proof}
Let us decompose $T=C_2\circ T \circ C_1$, where $C_1,C_2$ are the cyclic pretzel closures of the $2n$-tangle $T$. It can be checked that $\cA_t(C_1)$ and $\cA_t(C_2)$ are respectively given by the spans
$$
\xymatrix{
	& \CC^{n} \ar[ld] \ar[rd]^{t_1} & \\
	0 & & \CC^{2n}
} \quad \xymatrix{
	& \CC^{n} \ar[ld]_{t_1} \ar[rd] & \\
	\CC^{2n} & & 0
} 
$$
Focusing now on $T$, the inclusion of $\partial T$ into $T$ is a homotopy equivalence, so $\cA_t(T)$ is equivalent to a basic span, which can be computed from the basic spans of each braiding $T_i$. Since $T=T_1\otimes T_2\otimes \ldots \otimes T_n$, we get that $\cA_t(T)=\cA_t(T_1)\oplus \cA_t(T_2)\oplus \ldots \oplus \cA_t(T_n)$, thus obtaining a basic span whose associated linear map is the block-diagonal map $D$. Finally, composing $\cA_t(C_2)\circ \cA_t(T)\circ \cA_t(C_1)$ yields the endospan of $0$ given by the equalizer of $D \circ t_1$ and $t_1$:
$$
\xymatrix{
	& \Eq(D\circ t_1,t_1) \ar[ld] \ar[rd]& \\
	0 & & 0
}
$$
which is isomorphic to $\ker (t_2\circ D\circ t_1)$, since the components of the map $t_2$ form a basis of the annihilator of the image of $t_1$.
\end{proof}

Proposition \ref{prop:pretzelspans} shows that the Alexander module of a pretzel knot can be expressed in terms of the matrix product $PDQ$, where $P$ and $Q$ are the $n \times 2n$ and $2n \times n$ matrices, respectively, given by  
\[
P = 
\begin{pmatrix} 
-1 & 0 & 0 & 0 & 0  &  \cdots & 0 & 0 & 1 \\
0 & 1 & -1 & 0 & 0 & \cdots & 0 & 0 & 0 \\
0 & 0 & 0 & 1 & -1 & \cdots & 0 & 0 & 0 \\
\vdots & & & & & & & & \vdots \\
0 & 0 & 0 & 0 & 0 & \cdots & 1 & -1 & 0
\end{pmatrix},  
\qquad 
Q = 
\begin{pmatrix} 
1 & 0 & 0 & \cdots & 0 \\ 
0 & 1 & 0 & \cdots & 0 \\ 
0 & 1 & 0 & \cdots & 0 \\ 
\vdots & & & & \vdots \\ 
0 & 0 & 0 & \cdots & 1 \\ 
0 & 0 & 0 & \cdots & 1 \\
1 & 0 & 0 & \cdots & 0
\end{pmatrix}.
\]  
Here $D$ denotes a block diagonal matrix whose diagonal blocks are $2\times 2$ matrices determined by the basic spans that corresponding to the alternating and codirected braidings of the pretzel tangle $T$. Note that the elementary row and column operations defined by the pretzel closure matrices $P$ and $Q$ only involve adjacent rows and columns if we regard indices modulo $n$, so that the first and the last column
are adjacent too, matching the cyclic nature of a pretzel knot. Consequently, we are able to explicitly describe the linear map $\cA_t(K)=t_1\circ D \circ t_2$, which has a \emph{cyclic tridiagonal structure}: its associated matrix is tridiagonal with non-zero elements at its corners.
\begin{prop}
Let 
\[
D = \operatorname{diag}(D^1,\ldots,D^n)
\]  
be the block diagonal matrix associated with the basic span $\cA_t(T)$ of a pretzel tangle, where each block $D^i$ is the $2\times 2$ matrix corresponding to the basic span of an alternating or codirected braid. Then the span $\cA_t(K)$ is equivalent to the span
\[
0 \;\longleftarrow\; W \;\longrightarrow\; 0,
\]  
where $W$ is described as the kernel of the linear endomorphism with matrix representation  
$$
M = 
\begin{pmatrix}
a_1 & u_1 & 0 & 0 & \cdots & 0 & l_1 \\
l_2 & a_2 & u_2 & 0 & \cdots & 0 & 0 \\
0 & l_3 & a_3 & u_3 & \cdots & 0 & 0 \\
\vdots & \vdots & \vdots & \vdots & \ddots & \vdots & \vdots \\
0 & 0 & 0 & 0 & \ldots &a_{n-1} & u_{n-1}\\
u_n & 0 & 0 & 0 & \cdots & l_n & a_n
\end{pmatrix}.
$$ 
The entries of $M$ are determined by  
$$
a_k = D^{k-1}_{22} - D^{k}_{11}, 
\qquad 
u_k = -D^{k}_{12}, 
\qquad 
l_k = D^{k-1}_{21}, 
\qquad 
k=1,\ldots,n,
$$
where indices are understood cyclically, i.e. modulo $n$.
\end{prop}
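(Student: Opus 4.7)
The plan is to verify the proposition through a direct computation of the matrix product $M = PDQ$ appearing implicitly in Proposition~\ref{prop:pretzelspans}, where $P$ and $Q$ are the matrices representing the linear maps $t_2$ and $t_1$ respectively. By that proposition, the span $\cA_t(K)$ is determined by the kernel of $t_2\circ D\circ t_1$, so it suffices to show that the associated matrix coincides with the cyclic tridiagonal matrix in the statement. The approach is computational but clean, because both $P$ and $Q$ have very sparse, local, and cyclic support, and $D$ is block diagonal.

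First I would fix the indexing conventions. The block diagonal matrix $D=\diag(D^1,\ldots,D^n)$ has $D^k$ occupying rows and columns $2k-1$ and $2k$, so that $D_{2k-1,2k-1}=D^k_{11}$, $D_{2k-1,2k}=D^k_{12}$, $D_{2k,2k-1}=D^k_{21}$, and $D_{2k,2k}=D^k_{22}$. From the definition of $t_1$, the $j$-th column of $Q$ (for $j\geq 2$) has $1$'s in rows $2j-2$ and $2j-1$, while the first column has $1$'s in rows $1$ and $2n$, encoding the cyclic closure. Similarly, from $t_2$, the $i$-th row of $P$ (for $i\geq 2$) has entries $+1$ and $-1$ in positions $2i-2$ and $2i-1$, while the first row has $-1$ in position $1$ and $+1$ in position $2n$.

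Next I would compute $(DQ)_{\cdot,j}$ using the block diagonality of $D$. For $j\geq 2$, the column has four potentially nonzero entries, sitting in rows $2j-3,2j-2$ (inside block $D^{j-1}$) and $2j-1,2j$ (inside block $D^{j}$), taking values $D^{j-1}_{12}$, $D^{j-1}_{22}$, $D^{j}_{11}$, $D^{j}_{21}$ respectively. For $j=1$, the analogous four nonzero entries live in blocks $D^1$ (rows $1,2$) and $D^n$ (rows $2n-1,2n$), reflecting the cyclic wrap-around.

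Finally, I would multiply on the left by $P$. Each row of $P$ combines exactly two rows of $DQ$ with opposite signs, so $(PDQ)_{i,j}$ is nonzero only when the supports of row $i$ of $P$ and column $j$ of $DQ$ meet, which by construction happens only for $(i,j)$ with $i\equiv j,\,j\pm 1\pmod{n}$. Carrying out the matching row by row yields $M_{j,j}=D^{j-1}_{22}-D^{j}_{11}=a_j$, $M_{j-1,j}=-D^{j-1}_{12}=u_{j-1}$, $M_{j+1,j}=D^{j}_{21}=l_{j+1}$, together with the two corner entries $M_{1,n}=D^{n}_{21}=l_1$ and $M_{n,1}=-D^{n}_{12}=u_n$, where the indices wrap cyclically. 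The only delicate point, and hence the step I would check most carefully, is the bookkeeping at the corners: the fact that column $1$ of $Q$ and row $1$ of $P$ couple blocks $D^n$ and $D^1$ is exactly what produces the two off-diagonal corner entries and the cyclic interpretation of $l_1$ and $u_n$, establishing the cyclic tridiagonal form as claimed.
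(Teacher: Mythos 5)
Your proposal is correct and follows the same route as the paper, whose proof simply asserts that the result is a direct consequence of the block-diagonal structure of $D$ and the closure matrices $P$ and $Q$; you have filled in exactly that computation, and your index bookkeeping (including the wrap-around entries $M_{1,n}=D^n_{21}=l_1$ and $M_{n,1}=-D^n_{12}=u_n$) checks out against the conventions $a_k=D^{k-1}_{22}-D^k_{11}$, $u_k=-D^k_{12}$, $l_k=D^{k-1}_{21}$ with indices taken modulo $n$.
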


\begin{proof}
It is a direct consequence of the block diagonal structure of $\cA_t(T)$ and the pretzel closures $P$ and $Q$.
\end{proof}

Let $S$ be the cycle $\sigma=(1,\ldots,n)$, and let $M_{[i]}$ be the principal minor of $M$ that is obtained when removing the $i$-th row and column of $M$. Since the resulting matrix is tridiagonal, $M_{[i]}$ is the continuant $K^{i}_{n-1}$ defined recursively as in \eqref{eqn:recursioncontinuant}:
\begin{equation} \label{eqn:opencontinuantgeneral}
K^{i}_0=1, \quad K_1^{i}=a_{\sigma^{i}(1)}, \quad K_m^{i}=a_{\sigma^i(m)}K^{i}_{m-1}-u_{\sigma^i(m-1)}l_{\sigma^i(m)}K^{i}_{m-2}, \quad m \geq 2.
\end{equation}
Since $M$ presents the Alexander module of the pretzel knot $K$, we know that any minor of size $n-1$ leads to the Alexander polynomial, since the first elementary ideal is principal. Then
$$
M_{[i+1]}(q_1,\ldots,q_n;t)=M_{[i]}(\sigma(q_1,\ldots,q_n);t),
$$
which also matches the fact that pretzel knots are isotopic under cyclic permutations of the indices, that is,
$$
K(q_1,\ldots,q_n)\sim K(\sigma(q_1,\ldots,q_n)).
$$
We will choose the minor $M_{[1]}$ so that the recursive definition and the first diagonal entry of the resulting continuant $K$ starts involving $q_1$ and $q_2$. Hence
\begin{prop} \label{prop:pretzelAlexrecurrence}
Let $K$ be the pretzel knot $K=K(q_1,\ldots,q_n)$. Let $D=\diag(D^1,\ldots,D^n)$ be the diagonal block matrix defined by the pretzel tangle $T$ such that $\overline{T}=K$. Define
$$
a_k=D^k_{22}-D^{k+1}_{11}, \quad u_k=D^{k+1}_{12}, \quad l_k=-D_{21}^k, \quad k=1,\ldots, n,
$$
where indices are considered modulo $n$. Then the Alexander polynomial $\Delta_{K}(t)$  of $K$ equals $K_{n-1}$, where $\lbrace K_n\rbrace$ are the continuants that are recursively defined as 
$$
K_0=1, \quad K_1=a_1, \quad K_n=a_{n}K_{n-1}-l_nu_{n-1}K_{n-2}, \quad n\geq 2.
$$
\end{prop}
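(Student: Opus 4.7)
The plan is to deduce this proposition directly from Proposition~\ref{prop:pretzelspans}, together with the standard interpretation of elementary ideals provided by Theorem~\ref{thm-intro:TQFT-global}. The span $\cA(K)$ encodes the Alexander module of $K$, and specializing at $t\in\CC^{\ast}$ identifies $\ker M$ with $\cA_t(K)$, so the cyclic tridiagonal matrix $M$ of Proposition~\ref{prop:pretzelspans} is a presentation matrix of the Alexander module. Since $K$ is a knot, the first elementary ideal is principal and generated by $\Delta_K(t)$; hence every $(n-1)\times(n-1)$ minor of $M$ equals $\Delta_K(t)$, up to a unit in $\ZZ[t,t^{-1}]$.

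Among these minors, the principal minor $M_{[1]}$ obtained by deleting the first row and column is particularly convenient because it is genuinely tridiagonal: the corner entries $l_1$ and $u_n$ that spoil tridiagonality in $M$ lie precisely in the removed row and column. The cyclic symmetry of pretzel knots, $K(q_1,\ldots,q_n)\sim K(\sigma(q_1,\ldots,q_n))$, is already reflected in $M$, since deleting the $i$-th row and column instead amounts to applying $\sigma^i$ to the parameters; hence no generality is lost. With the reindexing specified in the statement, which shifts $k\mapsto k-1$ and absorbs the opposite signs appearing in the definitions of $u_k$ and $l_k$, the entries of $M_{[1]}$ form a tridiagonal matrix with diagonal $a_1,\ldots,a_{n-1}$, super-diagonal $u_1,\ldots,u_{n-2}$, and sub-diagonal $l_2,\ldots,l_{n-1}$. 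The sign flips cancel in the products $l_m u_{m-1}$ that drive the continuant recurrence, so the determinant is unaffected by this relabeling.

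At this point the statement reduces to the classical Laplace expansion of a tridiagonal determinant along its last row. Denoting by $K_m$ the determinant of the tridiagonal submatrix with diagonal $a_1,\ldots,a_m$, one obtains the two-term recursion
\begin{equation}
K_m = a_m K_{m-1} - l_m u_{m-1} K_{m-2}, \qquad m\geq 2,
\end{equation}
with $K_0=1$ and $K_1=a_1$; applying this recursion up to $m=n-1$ yields $\Delta_K(t) = \det M_{[1]} = K_{n-1}$, which is the desired formula.

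The main obstacle is not algebraic but notational: tracking precisely how the reindexing of Proposition~\ref{prop:pretzelspans} into the $(a_k,u_k,l_k)$ of the present statement interacts with the sign conventions, and verifying that the resulting normalization matches $\Delta_K(t)$ on the nose in small cases (for instance on the trefoil $P(1,1,1)$ or on the torus knots $P(1,\ldots,1)$). Once this bookkeeping is made precise, no further input beyond Proposition~\ref{prop:pretzelspans} and the standard continuant expansion is required.
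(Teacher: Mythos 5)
Your proposal is correct and follows essentially the same route as the paper: both arguments observe that the cyclic tridiagonal matrix $M$ of Proposition~\ref{prop:pretzelspans} presents the Alexander module, invoke principality of the first elementary ideal to reduce to the principal minor $M_{[1]}$, and identify that tridiagonal determinant with the continuant recursion after the index shift and sign cancellation in the products $l_m u_{m-1}$. No substantive difference to report.
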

We apply Proposition \ref{prop:pretzelAlexrecurrence} to each of the possible pretzel knots listed in Proposition \ref{prop:pretzeldecomp}.
\subsection{Odd pretzel knots $K=P(2k_1+1,\ldots, 2k_{n}+1)$} In this case, $n$ is odd and every braiding of $K$ is an alternating braiding, and moreover, we may choose orientations so that the first elementary tangle of each braiding is $R^3(X_{\pm})$. Then
$$
\tilde{f}_-^{2k_i+1}(t^{-1})=f_+(t)+k_i(1-t)h, \quad D_i=\begin{pmatrix} (1+k_i)(1-t) & t -k_i(1-t) \\ 1+k_i(1-t) & -k_i(1-t)\end{pmatrix}. 
$$
Applying Proposition  \ref{prop:pretzelAlexrecurrence}, we get that
$$
a_i=(-1-k_i-k_{i+1})(1-t), \quad l_i=-1-k_i(1-t), \quad u_i=t-k_{i+1}(1-t).
$$
Note that $a_i=l_i+u_i$ for all $i$, hence $\Delta_K(t)$ equals
\begin{equation*}
\resizebox{.95\hsize}{!}{$
\det T_{n-1}= \begin{vmatrix} (-1-k_1-k_2)(1-t) & t-k_2(1-t) & 0 &  \ldots & 0 & 0 \\
-1-k_2(1-t) & (-1-k_2-k_3)(1-t) & t-k_3(1-t)  & \ldots & 0 & 0  \\ \vdots & \vdots & \vdots & \ddots & \vdots & \vdots \\ 0 & 0 & 0 & \ldots  & -1-k_{n-1}(1-t) & (-1-k_{n-1}-k_n)(1-t) \end{vmatrix}
$}
\end{equation*}
The structure of the tridiagonal matrix $T_n$ leads us to prove the following result regarding the zeros of its Alexander polynomial.
\begin{thm}
Let $K$ be the alternating odd pretzel knot $K(2k_1+1,\ldots,2k_{n}+1)$, where $n$ is  odd and $k_i\geq 0$ for all $i=1,\ldots, n,$ and let $t_0\in \CC$ be such that $\Delta_K(t_0)=0$. Then $\abs{t_0}=1$.
\end{thm}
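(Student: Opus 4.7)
The plan is to substitute $t=e^{2i\phi}$, rewrite $\det T_{n-1}(t)$ as a unimodular phase times the determinant of a Hermitian matrix $M(\phi)$, and count zeros of $\det M(\phi)$ on $[0,\pi)$ via a signature argument; comparing the count with $\deg\Delta_K=n-1$ will force all zeros onto $|t|=1$.

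Using $1-t=-2i\sin\phi\,e^{i\phi}$ and factoring $e^{i\phi}$ out of each of the $n-1$ rows of $T_{n-1}(e^{2i\phi})$, a direct computation should give
\[
\det T_{n-1}(e^{2i\phi})=e^{i(n-1)\phi}\,(-1)^{(n-1)/2}\,\det M(\phi),
\]
where $M(\phi)$ is the Hermitian tridiagonal $(n-1)\times(n-1)$ matrix with $M_{ii}=2(1+k_i+k_{i+1})\sin\phi$ and $M_{i,i+1}=(1+2k_{i+1})\sin\phi-i\cos\phi=\overline{M_{i+1,i}}$. Moreover $M(\pi-\phi)=\overline{M(\phi)}$, so $\det M(\phi)=\det M(\pi-\phi)$ and zeros of $\Delta_K$ on $|t|=1$ are in bijection with zeros of the real function $\phi\mapsto\det M(\phi)$ on $[0,\pi)$, arranged symmetrically about $\phi=\pi/2$.

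Next I would pin down $M$ at the endpoints of $[0,\pi/2]$. At $\phi=0$ the diagonal vanishes and the off-diagonals equal $\mp i$; conjugation by a diagonal of powers of $-i$ turns $M(0)$ into the standard tridiagonal adjacency matrix of a path on $n-1$ vertices, whose eigenvalues are $2\cos(j\pi/n)$ for $j=1,\ldots,n-1$. Since $n$ is odd these are all non-zero and split evenly into $(n-1)/2$ positive and $(n-1)/2$ negative values. At $\phi=\pi/2$, $M$ is real symmetric with diagonal $(1+2k_i)+(1+2k_{i+1})$ and off-diagonal $1+2k_{i+1}$; conjugating by $\diag((-1)^i)$ flips the signs of the off-diagonals and exhibits $M(\pi/2)$ as the Dirichlet Laplacian of the weighted path on $\{0,1,\ldots,n\}$ with positive edge weights $\beta_i=1+2k_{i+1}$ and Dirichlet boundary at both endpoints, which is positive definite.

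Finally, let $N_-(\phi)$ denote the number of negative eigenvalues of $M(\phi)$ counted with multiplicity. Then $N_-(0)=(n-1)/2$, $N_-(\pi/2)=0$, and $N_-$ can only change at values of $\phi$ where $\det M(\phi)=0$; the net drop of $(n-1)/2$ therefore produces at least $(n-1)/2$ zeros of $\det M$ in $(0,\pi/2)$ counted with analytic multiplicity, and the symmetry above mirrors these to $(n-1)/2$ zeros in $(\pi/2,\pi)$. This yields at least $n-1$ zeros of $\det M$ on $(0,\pi)$. Combined with $\deg\Delta_K=n-1$---which follows from the leading-coefficient continuant of $\det T_{n-1}(t)$ being a strictly positive polynomial in $k_1,\ldots,k_n\ge 0$, or equivalently from the Seifert genus formula $g(K)=(n-1)/2$---every zero of $\Delta_K$ must lie on $|t|=1$. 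The main obstacle is proving positive-definiteness of $M(\pi/2)$: the identification with a Dirichlet Laplacian relies on the identity $2(1+k_i+k_{i+1})=(1+2k_i)+(1+2k_{i+1})$ matching the Laplacian degree pattern, together with the sign-flipping conjugation by $\diag((-1)^i)$; a minor bookkeeping issue is the possibility of simultaneous eigenvalue crossings, which is resolved by counting zeros of $\det M$ with their analytic multiplicity.
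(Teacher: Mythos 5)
Your proof is correct, but it takes a genuinely different route from the paper's, even though both rest on the same algebraic skeleton. The paper decomposes $T_{n-1}=\delta N^{\mathrm{sym}}+\bigl(1+\tfrac{\delta}{2}\bigr)J$ with $\delta=t-1$, $N^{\mathrm{sym}}$ positive definite and $J$ skew-symmetric; your Hermitian matrix is exactly $M(\phi)=-ie^{-i\phi}T_{n-1}(e^{2i\phi})=2\sin\phi\,N^{\mathrm{sym}}-i\cos\phi\,J$, i.e.\ the restriction of that pencil to the unit circle, and your Dirichlet--Laplacian identification of $M(\pi/2)$ is (up to the sign-flipping conjugation and a factor of $2$) the paper's factorization $N^{\mathrm{sym}}=B^tB+\tfrac{1+2k_n}{2}e_{n-1}e_{n-1}^t$. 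The divergence is in how positivity is used: the paper takes an arbitrary root $t$, picks a null vector $v$, and extracts from the real and imaginary parts of the scalar equation $\delta\,v^{\ast}N^{\mathrm{sym}}v+(1+\tfrac{\delta}{2})v^{\ast}Jv=0$ the relation $(x+1)^2+y^2=1$ --- a local, few-line argument needing no global information about $\Delta_K$. You instead count roots on the circle by tracking the negative inertia of $M(\phi)$ from $\phi=0$ to $\phi=\pi/2$ and compare with $\deg\Delta_K=n-1$, which obliges you to import extra ingredients: analytic eigenvalue branches (Rellich) or equivalent multiplicity bookkeeping at simultaneous crossings, the endpoint spectra, and a separate proof that the leading coefficient of $\det T_{n-1}$ is nonzero. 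All of these do go through --- in particular your leading continuant is the determinant of a tridiagonal matrix that is strictly diagonally dominant after symmetrization, since $\sqrt{k(1+k)}<k+\tfrac12$, and the genus route is also valid for these alternating diagrams --- so the argument closes. What your approach buys in exchange for its length is extra information: it exhibits the roots as sign changes of the real function $\det M(\phi)$ interlacing the inertia jumps on $(0,\pi/2)$ and its mirror, whereas the paper's argument only certifies that each root lies on the circle.
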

\begin{proof}
First, let us write $\delta:=t-1$, $s_i=1+k_i+k_{i+1}$. We may rewrite $l_i,a_i,u_i$ as
$$
a_i=\delta s_i, \quad l_i=k_i\delta-1, \quad u_i=(1+k_{i+1})\delta+1,
$$
so that we may decompose $T_{n-1}$ as $T_{n-1}=\delta N+J$, where $N$ and $J$ are the following tridiagonal matrices with integer coefficients:
$$
N=
\begin{pmatrix}
s_1     & 1+k_2 &        &        &        \\
k_2     & s_2   & 1+k_3  &        &        \\
        & \ddots& \ddots & \ddots &        \\
        &       & k_{n-2}& s_{n-2}& 1+k_{n-1}  \\
        &       &        & k_{n-1}    & s_{n-1}
\end{pmatrix},
\qquad
J=
\begin{pmatrix}
0    & 1    &      &        &      \\
-1   & 0    & 1    &        &      \\
     & \ddots& \ddots& \ddots &      \\
     &       & -1   & 0      & 1    \\
     &       &      & -1     & 0
\end{pmatrix}.
$$
$J$ is clearly skew-symmetric, and we may decompose $N=N^{sym}+N^{skew}$, where $N^{sym}$ is symmetric and $N^{skew}$ is skew-symmetric. Direct computation shows that $N^{skew}=\tfrac{1}{2}J$, and also
$$
N^{sym}_{ii}=s_i, \quad N^{sym}_{i,i+1}=N^{sym}_{i+1,i}=\frac{1+2k_{i+1}}{2}, \quad i=1,\ldots,n-1.
$$
$N^{sym}$ is positive definite: note that $N^{sym}=B^tB+\left( \frac{1+2k_n}{2} \right) e_{n-1}e_{n-1}^t$, where $B$ is the bidiagonal matrix defined as
$$
B_{1,1}=\sqrt{\frac{1+2k_1}{2}}, \qquad B_{i,i}=B_{i,i-1}=\sqrt{\frac{1+2k_{i}}{2}}, \quad i=2,\ldots,n-1.
$$
Therefore, 
$$
T_{n-1}=\delta N^{sym}+\left( 1+\frac{\delta}{2}\right)J.
$$
Now, let $t\in \CC$ such that $\det T_{n-1}(t)=0$. That implies that there exists $v\in \CC^n, v\neq0$ such that 
\begin{equation} \label{eqn:H+1plusdeltamedios}
\delta \, v^{\ast}N^{sym}v+\left( 1+ \frac{\delta}{2} \right)v^{\ast} J v=0.
\end{equation}
Write $h:=v^{\ast}N^{sym}v>0$, $i\gamma:=v^{\ast}Jv$, where $\gamma \in \mathbb{R}$. Let us write $\delta=x+iy$. Taking real and imaginary parts in \eqref{eqn:H+1plusdeltamedios}, we get the system of equations
$$
\begin{cases} xh-\gamma \frac{y}{2}=0, \\ 
yh+\gamma \left(1+\frac{x}{2} \right)=0 \end{cases}
$$
which reduces to $(x+1)^2+y^2=1$, thus $\abs{t}=\abs{\delta+1}=1$.
\end{proof}
\begin{cor}[Hoste]
Let $K$ be the alternating pretzel knot $P(2k_1+1, \ldots, 2k_{n}+1)$, where $n$ is odd and $k_i\geq 0$ for all $i$. Let $t_0\in \CC$ be such that $\Delta_K(t_0)=0$. Then $\Re(t_0)>-1$.
\end{cor}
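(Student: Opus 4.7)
The strategy is to combine the previous theorem, which guarantees $|t_0|=1$ for every zero $t_0$ of $\Delta_K(t)$, with a direct evaluation of the polynomial at $t=-1$. Since the only point on the unit circle with real part $\leq -1$ is $t_0=-1$ itself, it suffices to prove that $\Delta_K(-1)\neq 0$.

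First I would recall the decomposition obtained in the proof of the previous theorem, namely
\[
T_{n-1}(t)=\delta\, N^{\mathrm{sym}}+\Bigl(1+\tfrac{\delta}{2}\Bigr)J, \qquad \delta=t-1,
\]
where $N^{\mathrm{sym}}$ is the (real) symmetric positive definite matrix explicitly described there, and $J$ is the real skew-symmetric tridiagonal matrix with $\pm 1$ on the off-diagonals. Specializing at $t=-1$ yields $\delta=-2$ and hence $1+\delta/2=0$, which collapses the skew-symmetric contribution and leaves
\[
T_{n-1}(-1)=-2\,N^{\mathrm{sym}}.
\]
Taking determinants gives $\Delta_K(-1)=(-2)^{n-1}\det(N^{\mathrm{sym}})$, which is nonzero because $N^{\mathrm{sym}}$ is positive definite (as established by the factorization $N^{\mathrm{sym}}=B^{t}B+\bigl(\tfrac{1+2k_n}{2}\bigr)e_{n-1}e_{n-1}^{t}$ from the preceding argument).

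Finally, I would combine the two facts: by the previous theorem every zero $t_0$ of $\Delta_K$ lies on the unit circle, and by the computation above $t_0=-1$ is not a zero. For any point $t_0$ on the unit circle one has $\Re(t_0)\geq -1$ with equality if and only if $t_0=-1$, so excluding that single point forces $\Re(t_0)>-1$, completing the proof. There is no real obstacle here; the only subtle point is observing that the affine combination $\delta N^{\mathrm{sym}}+(1+\tfrac{\delta}{2})J$ degenerates precisely at $\delta=-2$, which is exactly the value that the Hoste conjecture requires to rule out, so the argument is essentially a free consequence of the positive-definiteness already proved.
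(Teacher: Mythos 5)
Your argument is correct and is essentially identical to the paper's proof: both rely on the preceding theorem to place all zeros on the unit circle, then rule out $t_0=-1$ by observing that $\delta=-2$ collapses $T_{n-1}$ to $-2N^{\mathrm{sym}}$, which is invertible by positive definiteness. No differences worth noting.
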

\begin{proof}
$t_0=-1$ cannot be a root of $\Delta_K=\det T_n$, since then $\delta=-2$ and $T_n=-2N^{sym}$ would be invertible, contradicting $\Delta_K(t_0)=\det T_n=0$.
\end{proof}

We can also use Proposition \ref{prop:pretzelAlexrecurrence} to obtain a closed formula for the Alexander polynomial of an odd pretzel knot, recently proved by Belousov \cite{BelousovAlexander} using the skein relation:
\begin{prop}
Let $K=P(q_1,\ldots,q_n)$ be an odd pretzel knot ($n$ is odd, and $q_i=2k_i+1$ for $i=1,\ldots n$). The Alexander polynomial of $K$ equals:
$$
\Delta_K(t)=\frac{1}{2^{n-1}}\sum_{i\geq 0} (t-1)^{2i}(t+1)^{n-1-2i}\sigma_{2i}(q_1,\ldots,q_n)
$$
where $\sigma_i$ denotes the $i$-th elementary symmetric polynomial.  
\end{prop}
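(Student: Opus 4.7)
The plan is to reformulate the continuant recurrence of Proposition~\ref{prop:pretzelAlexrecurrence} via a change of variables that reveals a clean difference-of-squares structure, and then to prove the resulting closed form by induction using elementary symmetric function manipulations.

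We set $\alpha = (t+1)/2$ and $\beta = (t-1)/2$, so that $\alpha - \beta = 1$ and $t = \alpha + \beta$. Writing $q_i = 2k_i + 1$, a direct substitution into the formulas of Proposition~\ref{prop:pretzelAlexrecurrence} yields
$$a_i = (q_i + q_{i+1})\beta,\qquad l_i = q_i\beta - \alpha,\qquad u_i = q_{i+1}\beta + \alpha,$$
and consequently the key identity $l_m u_{m-1} = (q_m\beta - \alpha)(q_m\beta + \alpha) = q_m^2\beta^2 - \alpha^2$. The continuant recurrence therefore takes the compact form
$$K_m = (q_m + q_{m+1})\beta\, K_{m-1} + (\alpha^2 - q_m^2\beta^2)\,K_{m-2},\qquad K_0 = 1,\; K_1 = (q_1 + q_2)\beta.$$

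We then homogenize by setting $K_m = \alpha^m P_m$ and $y_i = q_i\beta/\alpha$, obtaining
$$P_m = (y_m + y_{m+1})P_{m-1} + (1 - y_m^2)P_{m-2},\qquad P_0 = 1,\; P_1 = y_1 + y_2.$$
The core claim, which we prove by induction on $m$, is that
$$P_m \;=\; \sum_{j\,\equiv\, m\,(\mathrm{mod}\,2)} \sigma_j(y_1,\ldots,y_{m+1}).$$
In the inductive step, one applies the elementary splitting
$$\sigma_j(y_1,\ldots,y_{k+1}) = \sigma_j(y_1,\ldots,y_k) + y_{k+1}\,\sigma_{j-1}(y_1,\ldots,y_k)$$
twice to rewrite both sides of the recurrence in terms of $\sigma_j(y_1,\ldots,y_{m-1})$; grouping the resulting sums by the parity of $j$ and using $y_m \cdot y_m = y_m^2$ makes the $y_m^2$ contributions cancel, collapsing everything to a short algebraic identity.

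Finally, specializing to $m = n-1$ (which is even since $n$ is odd) and using the homogeneity $\sigma_{2j}(y_1,\ldots,y_n) = (\beta/\alpha)^{2j}\sigma_{2j}(q_1,\ldots,q_n)$, we obtain
$$\Delta_K(t) = \alpha^{n-1}P_{n-1} = \sum_{j \geq 0}\alpha^{n-1-2j}\beta^{2j}\sigma_{2j}(q_1,\ldots,q_n),$$
which matches the claimed formula after substituting $\alpha^{n-1-2j}\beta^{2j} = (t+1)^{n-1-2j}(t-1)^{2j}/2^{n-1}$. The main obstacle is recognizing the substitution $\alpha,\beta$: the coefficients $l_i$ and $u_i$ look asymmetric in $t$, and only after observing that $l_m u_{m-1}$ collapses to a difference of squares does the underlying symmetric-function structure become visible; the rest is combinatorial bookkeeping.
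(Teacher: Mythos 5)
Your proof is correct and follows essentially the same route as the paper's: the substitution $\alpha=(t+1)/2$, $\beta=(t-1)/2$ with the renormalization $K_m=\alpha^m P_m$ and $y_i=q_i\beta/\alpha$ is exactly the paper's change of variables $y=(t-1)/(t+1)$, $H_i=\tfrac{2^i}{(t+1)^i}K_i$, and the inductive claim on parity-graded sums of elementary symmetric polynomials, proved via the splitting identity, is the same. Your observation that $l_m u_{m-1}=q_m^2\beta^2-\alpha^2$ is a difference of squares is a slightly cleaner way to package the paper's computation of $(1+\delta)-\alpha_i\delta^2$, but the argument is otherwise identical.
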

\begin{proof}
Let us write $s_i:=\frac{q_i+q_{i+1}}{2}$, $\alpha_i:=\frac{q_i^2-1}{4}$ and $\delta:=t-1$. The recursion from Theorem \ref{prop:pretzelAlexrecurrence} reads as
\begin{align*}
K_i & =\delta s_i K_{i-1}+\left((1+\delta)-\alpha_{i}\delta^2\right)K_{i-2} \\ & = \delta s_iK_{i-1}+\frac{(t+1)^2}{4}\left( 1-q_i^2\frac{(t-1)^2}{(t+1)^2} \right)K_{i-2}, \quad i\geq 2,
\end{align*}
where we recall that $K_{n-1}=\Delta_K(t)$. If we define $y:=\frac{t-1}{t+1}$, and renormalize $K_i$ as $H_i:=\frac{2^i}{(t+1)^i}K_i$, $H_i$ satisfies the recursion
\begin{equation}
H_i=y(q_i+q_{i-1})H_{i-1}+(1-q_i^2y^2)H_{i-2}, \quad i\geq 2, \label{eqn:Hmrecursion}
\end{equation}
where $H_0=1$, $H_1=y(q_1+q_2)$. Then, we can prove by induction that
\begin{equation}
H_i=\begin{cases} \sum_{k\geq 0} \sigma_{2k}(q_1,\ldots,q_{i+1})y^{2k} & \quad \text{ for even } i, \label{eqn:Hmsymmetric} \\ \sum_{k\geq 0} \sigma_{2k+1}(q_1,\ldots,q_{i+1})y^{2k+1} & \quad \text{ for odd } i. \end{cases}
\end{equation}

We will also write $\sigma_k^{(j)}:=\sigma_k(q_1,\ldots,q_j)$. First, it is immediate to check $H_0=1=\sigma_0$, $H_1= \sigma_1(q_1,q_2)\,y$. Assume $m$ is even, and that \eqref{eqn:Hmsymmetric} holds for $m-1,m-2$. Then, the recursion \eqref{eqn:Hmrecursion} reads as
\begin{align*}
H_m & = y^2(q_m+q_{m-1})\sum_{k\geq 0}\sigma^{(m)}_{2k+1}y^{2k}+(1-q_m^2y^2)\sum_{k\geq 0} \sigma_{2k}^{(m-1)}y^{2k} \\
& = \sum _{r\geq 0} \sigma_{2r}^{(m-1)}y^{2r}+ \sum_{r\geq 0} \left[ (q_m+q_{m-1})\sigma^{(m)}_{2r-1}-q_m^2\sigma_{2r-2}^{(m-1)}     \right] y^{2r} \\& = \left[ \sigma_{2r}^{(m-1)}+(q_m+q_{m-1})\sigma_{2r-1}^{(m-1)}+q_mq_{m-1}\sigma_{2r-2}^{(m-1)} \right]y^{2r} = \sum_{r\geq 0}\sigma_{2r}^{(m+1)}y^{2r},
\end{align*}
after applying the elementary identities $\sigma_{r}^{(m)}=\sigma_{r}^{(m-1)}+q_m\sigma_{r-1}^{(m-1)}$ and $\sigma_{r}^{(m)}=\sigma_{r}^{(m-2)}+(q_{m}+q_{m-1})\sigma_{r-1}^{(m-2)}+q_mq_{m-1}\sigma_{r-2}^{(m-2)}$, being the odd inductive step similar. Hence
$$
\Delta_K(t)=\frac{1}{2^{n-1}}(t+1)^{n-1} H_{n-1}=\frac{1}{2^{n-1}}\sum_{i\geq 0} (t-1)^{2i}(t+1)^{n-1-2i}\sigma_{2i}(q_1,\ldots,q_n).
$$
\end{proof}

\subsection{Even pretzel knots $K(2k_1,2k_2+1,\ldots,2k_n+1)$} Throughout this section, let us write $q_1=2k_1$, $q_i:=2k_i+1$ for $i\geq 2$. In this case, there are two possible braidings according to Proposition \ref{prop:pretzeldecomp}, so we subdivide our computations according to the parity of $n$.

\subsubsection{$n=2p$} All braidings that form the pretzel knot are codirected and their direction alternates: we may choose an orientation for $K$ such that the first elementary tangle of the braid corresponding to $2k_1$ is $X_{\pm}$. As a consequence, for $i=1,\ldots,p,$, Corollary \ref{cor:X+-matrices}
implies that
\begin{equation} \label{eqn:Diforevenpr}
D_{2i-1}=\begin{pmatrix} 1-[q_{2i-1}]_{-t^{-1}} & [q_{2i-1}]_{-t^{-1}} \\ 
t^{-1}[q_{2i-1}]_{-t^{-1}} & 1-t^{-1}[q_{2i-1}]_{-t^{-1}} \end{pmatrix}, \quad  D_{2i}=\begin{pmatrix} 1-[q_{2i}]_{-t} & [q_{2i}]_{-t} \\ 
t[q_{2i}]_{-t} & 1-t[q_{2i}]_{-t} \end{pmatrix},
\end{equation}
thus direct application of Proposition \ref{prop:pretzelAlexrecurrence} yields that
$$
a_{2i-1}=-t^{-1}[q_{2i-1}]_{-t^{-1}}+[q_{2i}]_{-t}, \quad u_{2i-1}=[q_{2i}]_{-t}, \quad l_{2i-1}=-t^{-1}[q_{2i-1}]_{-t^{-1}},
$$
and also
$$
a_{2i}=-t[q_{2i}]_{-t}+[q_{2i+1}]_{-t^{-1}}, \quad u_{2i}=[q_{2i+1}]_{-t^{-1}}, \quad l_{2i}=-t[q_{2i}]_{-t}. 
$$
Note that for all $i=1,\ldots, n$, $a_i=l_i+u_i$. The next proposition provides a closed formula for the Alexander polynomial in terms of quantum integers.
\begin{prop} \label{prop:evenpretzelAlexpolyI}
Let $K$ be the even pretzel knot $P(q_1,q_2,\ldots,q_{2p})$. Then the Alexander polynomial of $K$ equals
\begin{equation} \label{eqn:evenpretzelAlexpolyI}
\Delta_K(t)=\left( \prod_{i=1}^{p} [-q_{2i-1}]_{-t} \prod_{i=1}^{p}[q_{2i}]_{-t}\right) \left( \sum_{i=1}^{p}\frac{1}{[q_{2i}]_{-t}}-\sum_{i=1}^{p} \frac{1}{[-q_{2i-1}]_{-t}} \right) 
\end{equation}
\end{prop}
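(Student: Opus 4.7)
The plan is to apply the continuant recursion from Proposition \ref{prop:pretzelAlexrecurrence} and to exploit the identity $a_i = l_i + u_i$, which is satisfied by the explicit values of $a_i, u_i, l_i$ computed in the paragraph preceding the proposition for every $i = 1, \ldots, 2p-1$. This additive relation allows the tridiagonal determinant to collapse into a product-times-sum form, producing directly the bracketed structure of the claimed formula.

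The key technical step is to introduce the auxiliary sequence $P_m := K_m - u_m K_{m-1}$. Substituting the continuant recursion and using $a_m - u_m = l_m$, a direct computation yields $P_m = l_m\, P_{m-1}$; together with the base case $P_1 = a_1 - u_1 = l_1$, this gives $P_m = l_1 l_2 \cdots l_m$. Iterating the resulting identity $K_m = u_m K_{m-1} + l_1 \cdots l_m$ down to $K_0 = 1$ produces the closed form
$$
K_{n-1} = \sum_{k=0}^{n-1} \left(\prod_{j=1}^{k} l_j\right)\left(\prod_{j=k+1}^{n-1} u_j\right),
$$
where empty products are interpreted as $1$.

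The final step is to substitute the explicit values of $l_i, u_i$ into this closed form. Using property P3 to rewrite $[q]_{-t^{-1}} = t[-q]_{-t}$, one obtains $l_{2i-1} = -[-q_{2i-1}]_{-t}$, $u_{2i-1} = [q_{2i}]_{-t}$, $l_{2i} = -t[q_{2i}]_{-t}$, and $u_{2i} = t[-q_{2i+1}]_{-t}$. Splitting the sum according to the parity of $k$ and computing the two subproducts of $l$'s and $u$'s separately, each term turns out to carry the same overall factor $\pm t^{p-1} \prod_{i=1}^p [-q_{2i-1}]_{-t}\, [q_{2i}]_{-t}$, divided by a single missing factor: $[-q_{2i-1}]_{-t}$ when $k$ is odd, and $[q_{2i}]_{-t}$ when $k$ is even. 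Factoring this common term out, the remaining alternating sum of reciprocals matches the bracketed expression in \eqref{eqn:evenpretzelAlexpolyI}, and the unit $\pm t^{p-1}$ is absorbed into the normalization of $\Delta_K(t)$. The main obstacle lies in the careful index bookkeeping in this last step, in particular tracking the alternating $t$-factors to confirm that they cumulatively produce a pure power of $t$ rather than a nontrivial polynomial, and reconciling the sign with the orientation convention implicit in Proposition \ref{prop:pretzeldecomp}.
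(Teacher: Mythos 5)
Your proposal is correct and follows essentially the same route as the paper: both exploit $a_i=l_i+u_i$ to reduce the continuant to the first-order relation $K_m=u_mK_{m-1}+l_1\cdots l_m$ (the paper packages the unrolling as $E_i=K_i/L_i$ with ratios $r_i=u_i/l_i$, which yields exactly your closed-form sum $\sum_k(\prod_{j\le k}l_j)(\prod_{j>k}u_j)$) and then substitutes the quantum-integer values of $l_j,u_j$. One small bookkeeping slip: the parities in your description of the omitted factor are swapped --- for even $k$ the missing factor is $[-q_{k+1}]_{-t}$ and for odd $k$ it is $[q_{k+1}]_{-t}$ --- but since the accompanying signs are $+$ and $-$ respectively, the corrected computation still produces the bracketed difference of reciprocal sums in \eqref{eqn:evenpretzelAlexpolyI} up to the unit $-t^{p-1}$, so the conclusion is unaffected.
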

\begin{proof}
Since $a_i=l_i+u_i$, we may rewrite the continuant recurrence from Theorem  \ref{prop:pretzelAlexrecurrence} as
$$
K_i-u_iK_{i-1}=l_i(K_{i-1}-u_{i-1}K_{i-2}),
$$
so setting $L_i:=K_i-u_iK_{i-1}$, we obtain the linear recurrence
$$
L_i=l_iL_{i-1}, \quad i\geq 2,
$$
so that $L_i=\prod_{j=1}^i l_j$ (note that $L_1=K_1-u_1K_0=a_1-u_1=l_1$). Thus $K_i$ satisfies the affine recurrence relation
$$
K_i=u_iK_{i-1}+L_i, \quad i\geq 1,
$$
or equivalently, writing $E_i=K_i/L_i$, and $r_i=u_i/l_i$,
$$
E_i=r_iE_{i-1}+1,
$$
whose general solution equals
$$
E_i=1+\sum_{j=1}^i\prod_{s=j}^ir_s.
$$
It is straightforward to check that
$$
r_{2s-1}=-t\frac{[q_{2s}]_{-t}}{[q_{2s-1}]_{-t^{-1}}}, \quad r_{2s}=-t^{-1}\frac{[q_{2s+1}]_{-t^{-1}}}{[q_{2s}]_{-t}},
$$
hence for even $i$
$$
\prod_{s=2l+1}^{i} r_s=\frac{[q_{i+1}]_{-t^{-1}}}{[q_{2l+1}]_{-t^{-1}}}, \quad \prod_{s=2l}^{i} r_s=-t^{-1}\frac{[q_{i+1}]_{-t^{-1}}}{[q_{2l}]_{-t}},
$$
and similarly for odd $i$
$$
\prod_{s=2l+1}^{i} r_s=-t\frac{[q_{i+1}]_{-t}}{[q_{2l+1}]_{-t^{-1}}}, \quad \prod_{s=2l}^{i} r_s=\frac{[q_{i+1}]_{-t}}{[q_{2l}]_{-t}}.
$$
Besides,
$$
L_{2p}=\prod_{i=1}^{p} [q_{2i-1}]_{-t^{-1}} \prod_{i=1}^{p}[q_{2i}]_{-t}, \qquad L_{2p-1}=-t^{-1} \prod_{i=1}^{p} [q_{2i-1}]_{-t^{-1}} \prod_{i=1}^{p-1}[q_{2i}]_{-t}.
$$
Setting $i=2p-1$ yields the desired result, as  $\Delta_K(t)=K_{2p-1}$ equals
$$
L_{2p-1}E_{2p-1} = \left( -t^{-1} \prod_{i=1}^{p} [q_{2i-1}]_{-t^{-1}} \prod_{i=1}^{p-1}[q_{2i}]_{-t}\right) \left( 1+[q_{2p}]_{-t}\left( \sum_{i=1}^{p-1}\frac{1}{[q_{2i}]_{-t}}-t\sum_{i=1}^{p} \frac{1}{[q_{2i-1}]_{-t^{-1}}} \right) \right),
$$
which equals \eqref{eqn:evenpretzelAlexpolyI} after applying Properties \eqref{property:quantumint1},\eqref{property:quantumint2}, \eqref{property:quantumint3} of quantum integers.

\end{proof}
Equation \eqref{eqn:evenpretzelAlexpolyI} allows us to prove that the zeros of the Alexander polynomial are bounded when $k_i>0$. We prove first an elementary lemma:
\begin{lem}\label{lem:complexineq}
Let $z\in \CC$ such that $\abs{z}>1$. Then $\Re\left(\frac{1}{z-1}\right)>-\frac{1}{2}$ 
\end{lem}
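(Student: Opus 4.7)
My plan is to reduce the inequality to a direct computation by the substitution $w = z-1$. Under this change of variables, the hypothesis $|z| > 1$ becomes $|w+1| > 1$, and the goal becomes $\Re(1/w) > -\tfrac{1}{2}$; note that $|z|>1$ in particular forces $z \neq 1$, hence $w \neq 0$, so $1/w$ is well defined.

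The key identity I would exploit is that, for any nonzero $w \in \CC$,
\[
\Re\!\left(\frac{1}{w}\right) = \Re\!\left(\frac{\bar w}{|w|^2}\right) = \frac{\Re(w)}{|w|^2}.
\]
Therefore
\[
\Re\!\left(\frac{1}{w}\right) + \frac{1}{2} = \frac{2\Re(w) + |w|^2}{2|w|^2} = \frac{|w+1|^2 - 1}{2|w|^2},
\]
using the elementary expansion $|w+1|^2 = |w|^2 + 2\Re(w) + 1$.

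Once this identity is in hand, the conclusion is immediate: substituting back $w+1 = z$, the numerator equals $|z|^2 - 1 > 0$ by hypothesis, while the denominator $2|w|^2 = 2|z-1|^2$ is strictly positive since $z \neq 1$. Hence $\Re(1/(z-1)) + 1/2 > 0$, which is exactly the claimed bound. I do not anticipate any genuine obstacle; the only point requiring minimal care is the nonvanishing of $z-1$, which is automatic from the strict inequality $|z|>1$.
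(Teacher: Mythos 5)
Your proof is correct and is essentially the same argument as the paper's: the paper invokes the M\"obius map $\phi(z)=\frac{z+1}{z-1}=1+\frac{2}{z-1}$ and the standard fact that $\Re(\phi(z))>0$ when $\abs{z}>1$, which is exactly the identity $\Re\left(\frac{1}{z-1}\right)+\frac{1}{2}=\frac{\abs{z}^2-1}{2\abs{z-1}^2}$ that you verify by direct computation. Your version just makes explicit the one step the paper leaves unproved.
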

\begin{proof}
Consider $\phi:\CC \rightarrow \CC$ defined as $\phi(z)=\frac{z+1}{z-1}=1+\frac{2}{z-1}$. Then $\Re(\phi(z))>0$ if $\abs{z}>1$, so $\Re(\frac{1}{z-1})=\Re(\frac{\phi(z)-1}{2})>-\frac{1}{2}$.
\end{proof}
\begin{thm} \label{thm:HosteevenpretzelI}
Let $K$ be the alternating even pretzel knot $P(q_1,q_2,\ldots,q_{2p})$, where $q_i>0$, and let $t\in \CC$ be such that $\Delta_K(t)=0$. Then $\abs{t}= 1$.   
\end{thm}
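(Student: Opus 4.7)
The plan is to exploit the explicit product form of $\Delta_K(t)$ given in Proposition \ref{prop:evenpretzelAlexpolyI}, analyze each of its two factors separately, and show that a zero of either factor off the unit circle would contradict Lemma \ref{lem:complexineq} (or a mirror version of it). Write $\Delta_K(t)=P(t)\cdot S(t)$ with
$$
P(t)=\prod_{i=1}^{p}[-q_{2i-1}]_{-t}\prod_{i=1}^{p}[q_{2i}]_{-t},\qquad S(t)=\sum_{i=1}^{p}\frac{1}{[q_{2i}]_{-t}}-\sum_{i=1}^{p}\frac{1}{[-q_{2i-1}]_{-t}}.
$$
Since $[n]_{-t}=\frac{1-(-t)^n}{1+t}$ vanishes only when $(-t)^n=1$ (note $t=-1$ gives $[n]_{-t}=n\neq 0$), and $[-q]_{-t}=-(-t)^{-q}[q]_{-t}$, every zero of $P(t)$ in $\CC^{*}$ already lies on $|t|=1$. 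Thus it suffices to control the zeros of $S(t)$ away from $|t|=1$.

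The key algebraic step is a substitution. Setting $s:=-t$, so that $1+t=1-s$, one computes
$$
\frac{1}{[q_{2i}]_{-t}}=\frac{1-s}{1-s^{q_{2i}}},\qquad -\frac{1}{[-q_{2i-1}]_{-t}}=\frac{(1-s)\,s^{q_{2i-1}}}{1-s^{q_{2i-1}}}.
$$
Using the identities $\tfrac{1}{1-s^n}=-\tfrac{1}{s^n-1}$ and $\tfrac{s^n}{1-s^n}=-1-\tfrac{1}{s^n-1}$, I can collapse $S(t)$ into the single closed form
$$
S(t)=-(1-s)\left[\,p+\sum_{j=1}^{2p}\frac{1}{s^{q_j}-1}\,\right].
$$
So $S(t)=0$ means either $t=-1$ (which is on the unit circle) or
\begin{equation*}
\sum_{j=1}^{2p}\frac{1}{s^{q_j}-1}=-p.
\end{equation*}

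The final step is to rule out this equation for $|t|\neq 1$. If $|t|>1$, then $|s^{q_j}|=|t|^{q_j}>1$ for every $j$ (here we use $q_j>0$), and Lemma \ref{lem:complexineq} gives $\Re\!\left(\tfrac{1}{s^{q_j}-1}\right)>-\tfrac{1}{2}$; summing over the $2p$ indices yields $\Re(\sum)>-p$, contradicting equality with $-p$. If instead $|t|<1$, setting $w=1/z$ and using $\tfrac{1}{z-1}=-1-\tfrac{1}{w-1}$ one obtains the mirror inequality $\Re\!\left(\tfrac{1}{z-1}\right)<-\tfrac{1}{2}$ for $|z|<1$, so the real part of the sum is strictly less than $-p$, again impossible. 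Hence $S(t)\neq 0$ whenever $|t|\neq 1$. Since neither $P(t)$ nor $S(t)$ has zeros off the unit circle (and the poles of $S$ lie on $|t|=1$, so the product $P\cdot S$ is well-defined there), any $t\in\CC^{*}$ with $\Delta_K(t)=0$ must satisfy $|t|=1$.

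The main obstacle is the algebraic reduction of $S(t)$ to the clean form $-(1-s)\bigl[p+\sum 1/(s^{q_j}-1)\bigr]$: once this symmetric reformulation is available, the analytic step via Lemma \ref{lem:complexineq} is immediate, and the positivity hypothesis $q_i>0$ is precisely what is needed to ensure $|s^{q_j}|$ and $|t|$ lie on the same side of the unit circle.
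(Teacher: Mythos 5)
Your proof is correct and follows essentially the same route as the paper: both reduce the vanishing of the sum factor, via $s=-t$, to the equation $p+\sum_{j=1}^{2p}\tfrac{1}{s^{q_j}-1}=0$ and rule it out for $\abs{t}>1$ with Lemma \ref{lem:complexineq}, after noting the product factor only vanishes at roots of unity. The only (cosmetic) difference is that for $\abs{t}<1$ you derive the mirror bound $\Re\bigl(\tfrac{1}{z-1}\bigr)<-\tfrac12$ directly, whereas the paper invokes the symmetry $\Delta_K(t)\doteq\Delta_K(t^{-1})$; both are fine.
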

\begin{proof}
Assume that $t\in \CC$ is such that $\abs{t}>1$, $\Delta_K(t)=0$. Then, it is clear that $$
L_{2p-1}=\left( \prod_{i=1}^{p} [-q_{2i-1}]_{-t} \prod_{i=1}^{p}[q_{2i}]_{-t}\right) \neq 0,
$$
since the only roots of $[\pm q_i]_{-t}$ are roots of unity, hence $E_{2p-1}(t)=0$. Thus
\begin{equation}\label{eqn:quantevenpretzelhoste}
\sum_{i=1}^{p}\frac{1}{[q_{2i}]_{-t}}-\sum_{i=1}^p
\frac{1}{[-q_{2i-1}]_{-t}}=0.
\end{equation}
Setting $z:=-t$ and applying elementary properties of quantum integers, \eqref{eqn:quantevenpretzelhoste} holds for $z\neq 1$ if and only if
\begin{equation} \label{eqn:moduluseqpret}
\sum_{i=i}^{2p} \frac{1}{z^{q_i}-1}+p=0.
\end{equation}
But direct application of \ref{lem:complexineq} shows that the real part of the left hand side of \eqref{eqn:moduluseqpret} is strictly positive, hence a contradiction. Since $\Delta_K(t)=\Delta_K(t^{-1})$ up to a power of $t$, this also excludes the case where $\abs{t}<1$.
\end{proof}
As the determinant of the knot is a non-zero odd integer, $t=-1$ is not a root of the Alexander polynomial:
\begin{cor}[Hoste]
Let $K$ be the alternating pretzel even knot $P(q_1,q_2,\ldots,q_{2p})$, $q_i> 0$. Then $\Re(t)>-1$.
\end{cor}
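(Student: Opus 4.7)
The plan is to combine the preceding theorem with a well-known arithmetic fact about the knot determinant. By Theorem~\ref{thm:HosteevenpretzelI}, every root $t\in\CC$ of $\Delta_K(t)$ satisfies $|t|=1$. On the unit circle we have $\Re(t)\geq -1$, with equality if and only if $t=-1$. Hence it suffices to rule out $t=-1$ as a root.

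To do that, I would invoke the classical fact that for any knot $K$, the \emph{determinant} $\det(K)=|\Delta_K(-1)|$ is a non-zero odd integer. This can be justified either by the general identity $\Delta_K(-1) \equiv 1 \pmod{2}$ (coming, e.g., from a Seifert matrix representation of the Alexander polynomial) or, more in the spirit of the paper, by directly evaluating the formula of Proposition~\ref{prop:evenpretzelAlexpolyI} at $t=-1$. Since the quantum integer satisfies $[n]_{1}=n$, evaluating the right-hand side of \eqref{eqn:evenpretzelAlexpolyI} at $t=-1$ reduces to an explicit product of positive odd integers $q_{2i-1}$ (and factors involving $q_{2i}$) multiplied by a sum of reciprocals, and one can verify this expression is a nonzero odd integer when all $q_i>0$. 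In either case, $\Delta_K(-1)\neq 0$.

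Combining the two ingredients, any root $t$ of $\Delta_K$ lies on the unit circle and differs from $-1$, so $\Re(t)>-1$, which is Hoste's bound. The only step requiring genuine work is the verification that $t=-1$ is not a root; if one chooses to give a self-contained argument (rather than quoting that $\det(K)$ is an odd integer), one must carefully track signs in \eqref{eqn:evenpretzelAlexpolyI} to confirm the evaluation at $t=-1$ is nonzero. This is the main potential obstacle, but it is routine given the closed-form expression already established.
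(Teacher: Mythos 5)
Your proof is correct and follows the paper's argument exactly: the paper likewise derives the bound by combining Theorem~\ref{thm:HosteevenpretzelI} (all roots lie on the unit circle) with the classical fact that the knot determinant $\abs{\Delta_K(-1)}$ is a non-zero odd integer, which excludes $t=-1$. The additional self-contained verification you sketch via Proposition~\ref{prop:evenpretzelAlexpolyI} is not needed in the paper but is a reasonable alternative.
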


\subsubsection{$n=2p+1$} In this case, the first braiding is alternating, and the others are codirected. Proceeding as before, we may orient $K$ so that the first braiding is exactly $\widetilde{T}_+^{2k_1}$, 
$$
D_1=\id -k_1(1-t^{-1})h=\begin{pmatrix} 1-k_1(1-t^{-1}) & k_1(1-t^{-1}) \\-k_1(1-t^{-1}) & 1+k_1(1-t^{-1})\end{pmatrix},
$$
where we recall that $q_1=2k_1$. $D_i$, for $i\geq 2$, matches exactly the pattern from the case $n=2p$ in \eqref{eqn:Diforevenpr}. As a consequence, the same recursion is satisfied, with the sole exception of the initial condition that equals
$$
a_1=k_1[2]_{-t^{-1}}+[q_2]_{-t}, \quad u_1=[q_2]_{-t}, \quad  l_1=k_1[2]_{-t^{-1}}.
$$
 The same strategy yields an analogous formula for even pretzel knots with and odd number of strands, as the next proposition shows.
\begin{prop}
Let $K$ be the even pretzel knot $P(q_1,q_2,\ldots,q_{2p+1})$. The Alexander polynomial of $K$ equals
\begin{equation}\label{eqn:evenpretzelAlexpolyII}
\Delta_K(t)  =\left( \frac{q_1}{2}[-2]_{-t} \prod_{i=2}^{p+1} [-q_{2i-1}]_{-t} \prod_{i=1}^{p}[q_{2i}]_{-t}\right)  \left( \sum_{i=1}^{p} \frac{1}{[q_{2i}]_{-t}}-\sum_{i=2}^{p+1}\frac{1}{[-q_{2i-1}]_{-t}}+\frac{2}{q_1}\frac{1}{t[-2]_{-t}} \right).  
\end{equation}
\end{prop}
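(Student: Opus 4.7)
The plan is to mirror the strategy used in the proof of Proposition \ref{prop:evenpretzelAlexpolyI}, exploiting that the only structural difference in the odd case $n=2p+1$ is the triple $(a_1,u_1,l_1)$ coming from the alternating first braiding; all other $(a_i,u_i,l_i)$ for $i\geq 2$ coincide with those of the $n=2p$ case. The first step is to observe that the crucial identity $a_i=l_i+u_i$ still holds for every $i\geq 1$, as can be read off directly from $a_1=k_1[2]_{-t^{-1}}+[q_2]_{-t}$, $u_1=[q_2]_{-t}$, $l_1=k_1[2]_{-t^{-1}}$. Hence the same substitution $L_i:=K_i-u_iK_{i-1}$ decouples the continuant recursion of Proposition \ref{prop:pretzelAlexrecurrence} into the linear recurrence $L_i=l_iL_{i-1}$, giving $L_i=\prod_{j=1}^i l_j$, and setting $E_i:=K_i/L_i$, $r_i:=u_i/l_i$ yields $E_i=r_iE_{i-1}+1$ with explicit solution $E_i=1+\sum_{j=1}^{i}\prod_{s=j}^{i}r_s$.

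Since $n=2p+1$, I need $\Delta_K(t)=K_{2p}$, so the task reduces to computing $L_{2p}$ and $E_{2p}$. For $i\geq 2$, $r_i$ matches the formulas used in the proof of Proposition \ref{prop:evenpretzelAlexpolyI}, so the same telescoping identities apply; for the terminal even index $i=2p$ these give, for $l\geq 1$,
\[
\prod_{s=2l+1}^{2p}r_s=\frac{[q_{2p+1}]_{-t^{-1}}}{[q_{2l+1}]_{-t^{-1}}},
\qquad
\prod_{s=2l}^{2p}r_s=-t^{-1}\frac{[q_{2p+1}]_{-t^{-1}}}{[q_{2l}]_{-t}}.
\]
The genuinely new input is the $j=1$ term: because $r_1=[q_2]_{-t}/(k_1[2]_{-t^{-1}})$, one obtains
\[
\prod_{s=1}^{2p}r_s=r_1\!\cdot\!\prod_{s=2}^{2p}r_s=-\frac{t^{-1}[q_{2p+1}]_{-t^{-1}}}{k_1[2]_{-t^{-1}}},
\]
which will supply the extra summand $\tfrac{2}{q_1}\,\tfrac{1}{t[-2]_{-t}}$ in the final formula.

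The remaining step is to substitute these into $\Delta_K=L_{2p}E_{2p}$ and rewrite using the quantum-integer identities \eqref{property:quantumint1}, \eqref{property:quantumint2}, \eqref{property:quantumint3}, in particular $[q]_{-t^{-1}}=t\,[-q]_{-t}$ applied to $q\in\{2,q_3,q_5,\ldots,q_{2p+1}\}$. This converts $L_{2p}$ into the prefactor $\tfrac{q_1}{2}[-2]_{-t}\prod_{s=1}^{p}[q_{2s}]_{-t}\prod_{s=2}^{p}[-q_{2s-1}]_{-t}$ up to a unit $\pm t^{p+1}$. The key reorganization is then to factor $-[-q_{2p+1}]_{-t}$ out of $E_{2p}$: the constant $1$ in the expansion becomes $-[-q_{2p+1}]_{-t}\cdot\bigl(-1/[-q_{2p+1}]_{-t}\bigr)$, which merges with the telescoping sums and extends both the product $\prod_{s=2}^{p}[-q_{2s-1}]_{-t}$ to $\prod_{i=2}^{p+1}[-q_{2i-1}]_{-t}$ in the prefactor and the sum $\sum_{i=2}^{p}$ to $\sum_{i=2}^{p+1}$ in the second parenthesis. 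Discarding the overall power of $t$, which is a unit in $\ZZ[t,t^{-1}]$, yields formula \eqref{eqn:evenpretzelAlexpolyII}.

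The only real obstacle is bookkeeping: correctly propagating the signs and the powers of $t$ introduced when passing from $[q]_{-t^{-1}}$ to $[-q]_{-t}$, and verifying that the isolated constant $1$ in $E_{2p}$ is exactly what produces the $i=p+1$ term in $\sum_{i=2}^{p+1}1/[-q_{2i-1}]_{-t}$. No additional analytic ingredient beyond the proof of Proposition \ref{prop:evenpretzelAlexpolyI} is needed.
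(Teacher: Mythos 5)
Your proposal is correct and follows essentially the same route as the paper: it reuses the $L_i=\prod l_j$, $E_i=1+\sum_j\prod_{s=j}^i r_s$ machinery from the even case verbatim, isolates the modified initial data $(a_1,u_1,l_1)=(k_1[2]_{-t^{-1}}+[q_2]_{-t},\,[q_2]_{-t},\,k_1[2]_{-t^{-1}})$, and computes $\prod_{s=1}^{2p}r_s=-t^{-1}[q_{2p+1}]_{-t^{-1}}/(k_1[2]_{-t^{-1}})$ exactly as the paper does before converting with the quantum-integer identities. Your extra remark that the lone constant $1$ in $E_{2p}$ is what supplies the $i=p+1$ term of $\sum 1/[-q_{2i-1}]_{-t}$ after factoring out $[-q_{2p+1}]_{-t}$ is a correct and slightly more explicit account of the final bookkeeping than the paper's ``and the result follows.''
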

\begin{proof}
The proof from Proposition \ref{prop:evenpretzelAlexpolyI} works verbatim, since the recurrence for $E_i=D_i/L_i$ still holds for $i\geq 2$ and all terms are equal. The change of initial conditions modifies the following terms
$$
u_1=[q_2]_{-t}, \quad l_1=k_1[2]_{-t^{-1}}, \quad r_1=\frac{[q_2]_{-t}}{k_1[2]_{-t^{-1}}},
$$
so that 
$$
L_{2p}=-k_1t[2]_{-t^{~1}}\left( \prod_{i=2}^{p}[q_{2i-1}]_{-t^{-1}} \right)\left( \prod_{i=1}^{p} [q_{2i}]_{-t} \right), \quad \prod_{s=1}^{2p}r_s=\frac{-t^{-1}[q_{2p+1}]_{-t^{-1}}}{k_1[2]_{-t^{-1}}},
$$
and recalling that $E_i=1+\sum_{j=1}^i\prod_{s=j}^ir_s$,
$$
L_{2p}E_{2p}  =\left( -\frac{q_1}{2}t[2]_{-t^{-1}} \prod_{i=2}^{p+1} [q_{2i-1}]_{-t^{-1}} \prod_{i=1}^{p}[q_{2i}]_{-t}\right)  \left( \sum_{i=2}^{p+1}\frac{1}{[q_{2i-1}]_{-t^{-1}}}-t^{-1}\sum_{i=1}^{p} \frac{1}{[q_{2i}]_{-t}}-\frac{2}{q_1}t^{-1}\frac{1}{[2]_{-t^{-1}}} \right)  
$$
and the result follows.
\end{proof}
In this case, it is not longer true that any root $t$ of the Alexander polynomial of $K$ satisfies $\abs{t}\leq 1$. For example, the Alexander polynomial of the alternating knot $9_{11}=P(-5,2,1,1,1)$ equals
$\Delta_K(t)=1-5t+7t^2-7t^3+7t^4-5t^5+t^6$, which has a positive real root in $[3,4]$. We provide an alternative proof to \cite{IshikawaMontesinos} of Hoste's bound for odd $n$ for this family:
\begin{thm}[Hoste] \label{thm:HosteevenpretzelII}
Let $K$ be the alternating even pretzel knot $P(q_1,q_2,\ldots,q_{2p+1})$, where $q_i>0$ and let $t\in \CC$ be such that $\Delta_K(t)=0$. Then $\Re(t)>-1$.
\end{thm}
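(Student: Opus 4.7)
The plan is to adapt the strategy of Theorem~\ref{thm:HosteevenpretzelI} to accommodate the extra $\frac{2}{q_1}\frac{1}{t[-2]_{-t}}$ term appearing in formula~\eqref{eqn:evenpretzelAlexpolyII}. Suppose for contradiction that $\Delta_K(t)=0$ with $\Re(t)\leq -1$, and set $z=-t$, so that $\Re(z)\geq 1$. The case $t=-1$ (equivalently $z=1$) corresponds to the determinant $|\Delta_K(-1)|$, which is a nonzero odd integer for any knot; this can also be seen directly by evaluating \eqref{eqn:evenpretzelAlexpolyII} at $t=-1$, where the prefactor becomes a positive product of the $q_i$ and the bracket becomes a strictly positive sum of reciprocals $1/q_i$. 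We may therefore assume $z\neq 1$, which, combined with $\Re(z)\geq 1$, forces $|z|>1$.

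I next need to confirm that the prefactor in \eqref{eqn:evenpretzelAlexpolyII} is non-vanishing. This reduces to noting that each $[q]_{-t}$ has only roots of unity as zeros and $[-2]_{-t}=(t-1)/t^2$ vanishes only at $t=1$; none of these lie in the region $\Re(z)\geq 1$, $z\neq 1$. Hence the bracket factor must vanish. Using the identities $[q]_{-t}=(1-z^q)/(1-z)$, $[-q]_{-t}=-z^{-q}[q]_{-t}$, and $t[-2]_{-t}=(z+1)/z$, the bracket translates into
\[
(1-z)\left[\sum_{i=1}^p\frac{1}{1-z^{q_{2i}}}+\sum_{i=2}^{p+1}\frac{z^{q_{2i-1}}}{1-z^{q_{2i-1}}}\right]+\frac{2}{q_1}\cdot\frac{z}{z+1}=0.
\]
Applying $z^q/(1-z^q)=-1+1/(1-z^q)$, dividing by $(1-z)$, and using $(z+1)(1-z)=-(z^2-1)$, this collapses to the compact identity
\[
S=p+\frac{2}{q_1}\cdot\frac{z}{z^2-1},\qquad S:=\sum_{i=2}^{2p+1}\frac{1}{1-z^{q_i}}.
\]

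The contradiction then follows by comparing real parts. Since $|z|>1$, each $|z^{q_i}|>1$, and Lemma~\ref{lem:complexineq} yields $\Re(1/(1-z^{q_i}))<1/2$, so summing over the $2p$ terms gives $\Re(S)<p$. On the other side, the partial fraction decomposition $\frac{z}{z^2-1}=\frac{1}{2}\bigl(\frac{1}{z-1}+\frac{1}{z+1}\bigr)$ combined with $\Re(z)\geq 1$ and $z\neq 1$ gives $\Re(1/(z-1))\geq 0$ (since $\Re(z-1)\geq 0$ and $z-1\neq 0$) and $\Re(1/(z+1))>0$ (since $\Re(z+1)\geq 2$). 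Therefore $\Re\bigl(\tfrac{2}{q_1}\tfrac{z}{z^2-1}\bigr)>0$, whence $\Re\bigl(p+\tfrac{2}{q_1}\tfrac{z}{z^2-1}\bigr)>p>\Re(S)$, contradicting the identity above.

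The principal difficulty I anticipate is the algebraic manipulation converting the quantum-integer expression into the clean form $S-p-(2/q_1)\,z/(z^2-1)=0$; once that identity is established, the inequality step parallels the $n=2p$ case with the extra term $\frac{z}{z^2-1}$ contributing in the correct direction precisely because $\Re(z)\geq 1$ places both $z-1$ and $z+1$ in the (closed) right half-plane. Conceptually, this is the obstruction that distinguishes the $n=2p+1$ regime: instead of forcing $|t|=1$, one only obtains the weaker conclusion $\Re(t)>-1$, because the extra Möbius-type term $\frac{z}{z^2-1}$ does not vanish identically on $|z|=1$.
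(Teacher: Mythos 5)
Your proof is correct and follows essentially the same route as the paper's: both reduce, via the closed formula \eqref{eqn:evenpretzelAlexpolyII} and the substitution $z=-t$, to the identity $\sum_{i\ge 2}\frac{1}{1-z^{q_i}}=p+\frac{1}{q_1}\bigl(\frac{1}{z-1}+\frac{1}{z+1}\bigr)$, and then derive a contradiction by combining Lemma~\ref{lem:complexineq} with the positivity of $\Re\frac{1}{z\pm 1}$ in the right half-plane. The only difference is that you explicitly treat the boundary cases $t=-1$ and $\Re(t)=-1$ (where $\Re\frac{1}{z-1}$ is only $\ge 0$), which the paper's proof, starting from the strict assumption $\Re(t)<-1$, leaves implicit; this is a welcome bit of extra care rather than a change of method.
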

\begin{proof}
Let $t\in \CC$ such that $\Re(t)<-1$. Proceeding as in Theorem \ref{thm:HosteevenpretzelI}, it suffices to show that 
\begin{equation} \label{eqn:hosteeqnII}
\Re \left( \sum_{i=1}^{p} \frac{1}{[q_{2i}]_{-t}}-\sum_{i=2}^{p+1}\frac{1}{[-q_{2i-1}]_{-t}}+\frac{2}{q_1}\frac{1}{t[-2]_{-t}} \right)\neq 0,
\end{equation}
which proves that $t$ is not a root of the Alexander polynomial. Writing $z=-t$, the left hand side of \eqref{eqn:hosteeqnII} vanishes if and only if
$$
p+\sum_{i=2}^{2p+1}\frac{1}{z^{q_i}-1}+\frac{2z}{k_1(z^2-1)}=p+\sum_{i=2}^{2p+1}\frac{1}{z^{q_i}-1}+\frac{1}{q_1}\left(\frac{1}{z-1}+\frac{1}{z+1}\right)
$$
Lemma \ref{lem:complexineq} yields that $\Re\left(p+\sum_{i=2}^{2p+1} \frac{1}{z^{q_i}-1} \right)>0$, but also $\frac{1}{z-1}, \,\frac{1}{z+1}$ have positive real parts since $\Re(z)>1$.
\end{proof}
\section{The functor $\cA_{-1}$} \label{sec:functorA-1}

The functor $\cA_{-1}$ is related to several classical invariants of knots and encodes several aspects of their arithmetic and topology. For instance, it is a classical result that evaluating the Alexander polynomial at $t=-1$ yields the determinant of $K$, defined as the order of $H_1(X_2)$, where $X_2$ denotes the double cover of $S^3$ branched over $K$. Another well-known case where the specialization $t=-1$ recovers an important invariant arises in the theory of Alexander linear quandles: when $t=-1$, the quandle operation on $\ZZ_p$ reduces to
$$
a \triangleright_{-1} b = 2a - b,
$$
which is classically connected to knot colorings \cite{ColoringsKauffman}.

In our setting, $t=-1$ is a fixed point of the involution $\tau:\CC^{\ast}\to\CC^{\ast}$, $\tau(t)=t^{-1}$. Geometrically, Remark~\ref{rem:algebraicinvolution} shows that the fibres of $\cA(T)$ and $\cA(T')$ at $t=-1$ are isomorphic, a fact which also follows algebraically from Corollary~\ref{cor:spanreverse}. Consequently, $\cA_{-1}(T)$ depends only on the unoriented isotopy class of the tangle, which we denote by $[T]$. From a categorical perspective, the functor $\cT_{-1}$ factors through the forgetful functor $\cT \to \cT^{\mathrm{unor}}$, where $\cT^{\mathrm{unor}}$ denotes the category of unoriented tangles. Thus:

\begin{lem} There is a well-defined monoidal functor
$\cA_{-1}: \cT^{unor}\longrightarrow \Span(\Vect_\CC)$.
\end{lem}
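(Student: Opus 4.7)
The plan is to verify that $\cA_{-1}$ is invariant under reversal of orientations on each of the elementary generators of $\cT$, and then to extend this to the whole category by functoriality. The underlying mechanism is that at the distinguished point $t=-1$, which is a fixed point of the involution $\tau\colon t\mapsto t^{-1}$, all the places where $\cA_t$ distinguishes orientations (through $t$ versus $t^{-1}$) collapse to a single value.

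First I would observe that every unoriented tangle admits an oriented lift, so it suffices to show that whenever two oriented tangles $T_1,T_2$ project to the same unoriented tangle, one has $\cA_{-1}(T_1)=\cA_{-1}(T_2)$. Because $\cT$ is generated under composition and monoidal product by the six elementary morphisms in \eqref{eqn:generatorstanglecat} together with their rotations, and any orientation change on a composite tangle decomposes into orientation changes on its building blocks, the verification reduces to checking the claim on each generator.

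For the caps and cups, the identities $\cA_t(\cupleft)=\cA_t(\cupright)$ and $\cA_t(\capleft)=\cA_t(\capright)$ hold for every $t\in\CC^{\ast}$ and therefore in particular at $t=-1$. For the crossings, I would appeal directly to Table~\ref{tab:basic_spans}: the four rotations of $X_+$ give basic spans with linear maps $f_{+}(t), f_{-}(t), f_{+}(t^{-1}), f_{-}(t^{-1})$, while the four rotations of $X_-$ give $f_{-}(t), f_{+}(t^{-1}), f_{-}(t^{-1}), f_{+}(t)$. Regrouping these eight oriented generators according to their unoriented crossing type (positive or negative) and specializing to $t=-1$, the equality $t=t^{-1}$ forces all four orientations of the positive unoriented crossing to yield the basic span with map $f_{+}(-1)$, and all four orientations of the negative unoriented crossing to yield the basic span with map $f_{-}(-1)$. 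Hence $\cA_{-1}$ depends only on the unoriented class of each generator.

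With this established, since $\cA_{-1}$ is already a monoidal functor on $\cT$ and its values on generators depend only on their unoriented classes, it factors through the forgetful monoidal functor $F\colon \cT\to\cT^{unor}$, yielding a well-defined monoidal functor $\cA_{-1}\colon \cT^{unor}\to \Span(\Vect_{\CC})$; monoidality is inherited from $\cT$ because $F$ is strict monoidal. The main subtlety I anticipate is confirming that localized orientation reversals on a single strand (rather than only global reversals, which are already handled by Corollary~\ref{cor:spanreverse}) are trivialized by $\cA_{-1}$; this is precisely what the coincidences $f_{\pm}(t)=f_{\pm}(t^{-1})$ at $t=-1$ provide at the level of the elementary crossing generators.
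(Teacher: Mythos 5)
Your proof is correct and rests on the same mechanism the paper uses --- the coincidence $f_{\pm}(t)=f_{\pm}(t^{-1})$ at the fixed point $t=-1$ of the involution $\tau(t)=t^{-1}$, together with the orientation-independence of the cup and cap spans --- but you organize it more carefully than the paper does. The paper justifies the lemma in the paragraph preceding it by citing Remark~\ref{rem:algebraicinvolution} and Corollary~\ref{cor:spanreverse}, which only treat the \emph{global} reverse $T'$ of a tangle (all strands reversed simultaneously); as you correctly point out, two oriented lifts of the same unoriented tangle may differ by reversing only a subset of the components, and that case is not literally covered by the identity $\cA_{-1}(T')=\cA_{-1}(T)$. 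Your reduction to generators --- using Table~\ref{tab:basic_spans} and Proposition~\ref{prop:diagramrotatedspans} to see that the four orientations of each unoriented crossing type collapse to a single basic span ($f_{+}(-1)$ or $f_{-}(-1)$) at $t=-1$ --- supplies exactly the missing local statement, and it is in fact the same computation that underlies the proof of Corollary~\ref{cor:spanreverse} itself, now applied crossing-by-crossing rather than to the whole tangle at once. The well-definedness on objects is immediate since $\cA_t(\epsilon)=\CC^{\abs{\epsilon}}$ ignores the signs, and the factorization through the forgetful functor together with the inheritance of monoidality go through as you describe. In short: same underlying idea as the paper, but your version makes explicit, and closes, a small gap in its one-line justification.
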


Moreover, our computations show that $t=-1$ is the unique value for which the maps $f_{\pm} : \CC^2 \to \CC^2$ fail to be diagonalizable, as the spectrum of $f_{\pm}$ is given by $\sigma=\lbrace1,-t \rbrace$. In addition, since $[n]_{-1}=n$, Propositions \ref{prop:powersf+f-} and \ref{prop:twistedtanglesformulasII} specialize to
\begin{align*}
f^n_{+}(a_1,a_2) & =((1+n)a_1-na_2, na_1+(1-n)a_2)=\tilde{f}_-^n(a_1,a_2) \\f^n_{-}(a_1,a_2) & =((1-n)a_1+na_2, -na_1+(1+n)a_2)=\tilde{f}_+^n(a_1,a_2),
\end{align*}
which leads us to the following definition.
\begin{defn}
For $s\in \CC$, let $f^s$ be the linear endomorphism
$$
f^s: \CC^2 \longrightarrow \CC^2, \quad f^s=\id+sh,
$$
where $h:\CC^2\longrightarrow\CC^2$ is the nilpotent map given by $h(a_1,a_2)=(a_1-a_2,a_1-a_2)$.
\end{defn}

It is immediate to verify that $f^{s'} \circ f^s= f^{s'+s}$ and that $f=f_+, \,f^{-1}=f_-$ . As we did before, we will in this section identify a linear map with a basic span via the embedding of $\Vect_{\CC}$ inside $\Span(\Vect_{\CC})$. A direct computation of the rotation of $f^s$ shows that they behave nicely with respect to rotations:
\begin{prop}\label{prop:rotationfs}
The rotation of the basic span $f^s$ is equivalent to the basic span $f^{-1/s}$
\end{prop}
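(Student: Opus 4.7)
The plan is to apply Lemma~\ref{lem:rotation2tangle} to the basic span of $f^s$ and then exploit the earlier remark which states that a span whose left leg is an isomorphism is uniquely equivalent to a basic span. Concretely, I would first expand $f^s = \id + sh$ in coordinates as
$$
f^s(a_1,a_2) = \bigl((1+s)a_1 - s a_2,\; s a_1 + (1-s)a_2\bigr),
$$
and identify, in the notation of Lemma~\ref{lem:rotation2tangle} applied to the basic span $\CC^2 \overset{\id}{\longleftarrow} \CC^2 \overset{f^s}{\longrightarrow} \CC^2$, the components $(f_1,f_2) = (a_1,a_2)$ and $(g_1,g_2) = f^s$.

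By Lemma~\ref{lem:rotation2tangle}, the rotated span then has left leg $(g_1,f_1)(a_1,a_2) = ((1+s)a_1 - s a_2,\, a_1)$ and right leg $(g_2,f_2)(a_1,a_2) = (s a_1 + (1-s) a_2,\, a_2)$. The matrix of the left leg in the canonical basis is
$$
M = \begin{pmatrix} 1+s & -s \\ 1 & 0 \end{pmatrix},
$$
whose determinant is $s$. Hence, for $s\neq 0$ the left leg is an isomorphism and, by the remark on spans with invertible left leg preceding the description of $\cA_t$ on generators, the rotated span is equivalent to the basic span determined by the linear map $g' := (g_2, f_2)\circ (g_1,f_1)^{-1}$.

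The remaining step is a routine matrix computation: inverting $M$ and composing with the matrix of the right leg produces the $2\times 2$ matrix $\id - \tfrac{1}{s}h$, which is precisely $f^{-1/s}$. I do not anticipate a serious obstacle, as the argument is essentially a direct unwinding of the definitions of rotation and of basic spans. The only delicate point is the degenerate case $s=0$, where $f^0 = \id$ rotates to a span whose left leg is singular and which is not equivalent to a basic endomorphism of $\CC^2$; this is consistent with interpreting rotation as the M\"obius transformation $s \mapsto -1/s$ on $\mathbb{P}^1$, a viewpoint that matches the identification of rational tangles with $\mathbb{Q}\cup\{\infty\}$ used in the rest of the section.
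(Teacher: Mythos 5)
Your argument is correct and coincides with the paper's own proof: both apply Lemma~\ref{lem:rotation2tangle} to the basic span of $f^s$, obtain the legs $((1+s)a_1-sa_2,\,a_1)$ and $(sa_1+(1-s)a_2,\,a_2)$, and convert the resulting span (whose left leg is invertible) into the basic span of the composite, which is $\id-\tfrac{1}{s}h=f^{-1/s}$. Your explicit remark on the degenerate case $s=0$ and the M\"obius-transformation reading is a harmless refinement the paper leaves implicit.
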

\begin{proof}
Applying Lemma \ref{lem:rotation2tangle}, the rotation of $f^s$ equals
$$
\xymatrix{
	& \CC^{2} \ar[ld]_{g_1} \ar[rd]^{g_2} & \\
	\CC^2 & & \CC^{2}
} 
$$
where $g_1(a_1,a_2)=((1+s)a_1-sa_2,a_1)$ and $g_2(a_1,a_2)=(sa_1+(1-s)a_2,a_2)$, equivalent to the basic span determined by $g_2\circ g_1^{-1}=f^{-1/s}$.
\end{proof}

As a consequence, following the conventions from \cite{kauffmanrationaltangles}, where $[X_+]=[-1]$ and $[X_-]=[+1]$,  we get that $\cA_{-1}([n])=f^{-1/n}$ for $n\in \ZZ$. Thus
\begin{cor} \label{cor:powersandrotat-1} For any $n\in \ZZ$,
$\cA_{-1}(\pm1/[n])=f^{\mp n}, \quad \cA_{-1}(\pm [n])=f^{\mp1/n}$
\end{cor}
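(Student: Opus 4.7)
The plan is to derive the three remaining identities of the corollary from the already-established $\cA_{-1}([n])=f^{-1/n}$, stated immediately before the corollary, by invoking two basic symmetries at $t=-1$: a mirror symmetry (which swaps $X_+\leftrightarrow X_-$, hence sends the basic span $f^s$ to $f^{-s}$), and the rotation formula from Proposition \ref{prop:rotationfs} (which sends $f^s$ to $f^{-1/s}$).

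First I handle the case $\cA_{-1}(\pm[n])=f^{\mp 1/n}$. Since the mirror image of the rational tangle $[n]$ is isotopic to $[-n]$, one can either apply the established formula with $n$ replaced by $-n$, or directly invoke the mirror symmetry observing that mirroring sends $\cA_{-1}(T)=f^s$ to $\cA_{-1}(-T)=f^{-s}$. Either route produces $\cA_{-1}(-[n])=f^{1/n}$.

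For the other two, $\cA_{-1}(\pm 1/[n])=f^{\mp n}$, I identify the Kauffman--Lambropoulou operation $T\mapsto 1/T$ in terms of the paper's rotation $R$. Because $R$ induces the transformation $F\mapsto -1/F$ on the rational fraction, we have the tangle identities $R([n])=-1/[n]$ and $R(-[n])=1/[n]$. Applying Proposition \ref{prop:rotationfs} to $\cA_{-1}([n])=f^{-1/n}$ and to $\cA_{-1}(-[n])=f^{1/n}$ then yields
$$\cA_{-1}(-1/[n])=f^{-1/(-1/n)}=f^n,\qquad \cA_{-1}(1/[n])=f^{-1/(1/n)}=f^{-n}.$$

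The only nontrivial step is aligning the Kauffman--Lambropoulou sign convention with the paper's $R$, that is, verifying that $R$ corresponds to $T\mapsto -1/T$ rather than $T\mapsto 1/T$. This is where I expect to spend the most care, and it can be confirmed on the simplest case using Proposition \ref{prop:diagramrotatedspans}: at $t=-1$ one has $\cA_{-1}(R(X_-))=\cA_{-1}(X_+)=f$, which forces $R([1])=[-1]=-1/[1]$, fixing the sign convention for all $n$.
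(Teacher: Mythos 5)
Your argument is correct and uses essentially the same ingredients as the paper: the corollary is meant to follow from the specialization $\cA_{-1}(X_\pm^n)=f^{\pm n}$ together with Proposition \ref{prop:rotationfs}, and you simply run the derivation in the opposite direction (from the stated horizontal case $\cA_{-1}([n])=f^{-1/n}$ to the vertical case via rotation, rather than from vertical twists to horizontal ones). Your sign-convention check via $\cA_{-1}(R(X_-))=\cA_{-1}(X_+)=f$ is consistent with the paper's convention $[X_+]=[-1]$, $[X_-]=[+1]$ and with $R$ acting as $F\mapsto -1/F$ on fractions.
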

For any rational number $p/q\in \mathbb{Q}$, there exists a positive standard continued fraction $[a_1,\ldots,a_n]$, which is unique if all $a_i$ have the same sign and $n$ is odd. In analogy, rational tangles admit a continued fraction description: every rational tangle is isotopic to a tangle obtained by iterated vertical compositions of the elementary tangles $[X_{\pm}]$ on the top, together with successive horizontal additions of $[X_{\pm}]$ on the right. Moreover, there exists a \emph{canonical} continued fraction form of the tangle, characterized by the property that its planar projection is alternating and the number of iterations is odd, topologically mirroring the arithmetic properties of continued fractions.  

We restate this theorem from \cite{kauffmanrationaltangles} in terms of rotations and compositions:
\begin{thm} \label{thm:continuousfractangle}
Any rational tangle is isotopic to a tangle in canonical form, that is
\begin{equation}\label{eqn:tanglecontinuedform}
T_{[a_1,\ldots,a_n]}\sim R^{-1}([X_+]^{a_1}\circ R([X_-]^{a_{2}}\circ \ldots R^{-1}([X_+]^{a_{n-2}}\circ R([X_-]^{a_{n-1}}\circ R^{-1}([X_+]^{a_n})))))
\end{equation}
where $a_i \in \mathbb{N}$ and $n$ is odd.
\end{thm}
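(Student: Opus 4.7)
The plan is to recognize the right-hand side of \eqref{eqn:tanglecontinuedform} as an inductive construction of a rational tangle and then invoke the Kauffman--Lambropoulou classification to identify it with the canonical form of $T_{[a_1,\ldots,a_n]}$. I will prove the slightly more general statement that the analogous nested formula with strictly alternating $[X_+]/[X_-]$ signs and $R^{-1}/R$ rotations represents the rational tangle whose Conway fraction is $[a_1,\ldots,a_n]$, for any finite sequence of positive integers. The theorem then follows by restricting to odd $n$, where K--L uniqueness pins down the canonical form.

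For the base case $n=1$, I would verify that $R^{-1}([X_+]^{a_1})$ is the horizontal integer tangle $[a_1]$: indeed, $[X_+]^{a_1}$ is a vertical stack of $a_1$ elementary crossings, and rotation by $-\pi/2$ lays it horizontally. This is compatible with Corollary~\ref{cor:powersandrotat-1} and Proposition~\ref{prop:rotationfs}, since $\cA_{-1}(R^{-1}([X_+]^{a_1})) = f^{-1/a_1} = \cA_{-1}([a_1])$.

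For the inductive step, let $T'$ denote the analogous nested tangle of length $n-1$ built from the shifted sequence $(a_2,\ldots,a_n)$, whose outermost sign and rotation are flipped with respect to the original; by the inductive hypothesis $T' \sim T_{[a_2,\ldots,a_n]}$. The outer wrapping $R^{-1}([X_+]^{a_1}\circ R(T'))$ should then reproduce the geometric recursion
\[
T_{[a_1,\ldots,a_n]} \;\sim\; [a_1] + R^{-1}(T_{[a_2,\ldots,a_n]}),
\]
which corresponds to the arithmetic recursion $[a_1,\ldots,a_n] = a_1 + 1/[a_2,\ldots,a_n]$ under Conway's bijection. This identification uses the definition $T+T' = R^{-1}(R(T')\circ R(T))$ of the tangle sum, the base-case identity $R([a_1]) \sim [X_+]^{a_1}$, and the commutativity $R^{-1}(A\circ B)\sim R^{-1}(B\circ A)$ for rational tangles (which reduces to the classical commutativity $T+T'\sim T'+T$, ultimately mirroring commutativity of addition in $\mathbb{Q}$).

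The main obstacle will be the careful bookkeeping of sign and rotation alternation across the induction: each nesting level must flip the crossing sign ($[X_+]$ versus $[X_-]$) and reverse the rotation direction ($R$ versus $R^{-1}$), and this alternation must be consistent with both the visual alternation of the planar diagram and the inductive shift of the sequence. Once these translations are verified, the general claim reduces to the classical Kauffman--Lambropoulou theorem, whose uniqueness statement for odd $n$ and positive $a_i$ identifies the constructed tangle with $T_{[a_1,\ldots,a_n]}$ in canonical form.
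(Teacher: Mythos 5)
First, note that the paper does not actually prove this statement: it is introduced with ``We restate this theorem from \cite{kauffmanrationaltangles} in terms of rotations and compositions,'' so the paper's ``proof'' is a citation of Kauffman--Lambropoulou together with an (unverified) translation of their continued-fraction form into the nested $R^{\pm1}$/composition expression. Your strategy---induct on the nesting depth, identify the right-hand side with the tangle of fraction $[a_1,\ldots,a_n]$, and invoke the Kauffman--Lambropoulou classification for existence/uniqueness of the canonical form---is the natural way to actually justify that translation, and in that sense it supplies more than the paper does. One caution: your fraction computations must be carried out combinatorially (via $F(T+[n])=F(T)+n$ and $F(R^{\pm1}(T))=-1/F(T)$), not via $\cA_{-1}$, since Theorem \ref{thm:rationaltanglesspan} is itself proved \emph{from} this theorem; your base case uses $\cA_{-1}$ only as a consistency check, which is fine, but it cannot carry the isotopy argument.

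There is, however, a concrete sign error in your inductive step that prevents the induction from closing. With the paper's conventions ($[X_+]=[-1]$, $[X_-]=[+1]$, and rotation acting on fractions by $x\mapsto -1/x$), the inner nested tangle $T'$ built from $(a_2,\ldots,a_n)$ with $[X_-]$ and $R$ outermost satisfies $T'\sim T_{-[a_2,\ldots,a_n]}=\overline{T_{[a_2,\ldots,a_n]}}$, \emph{not} $T'\sim T_{[a_2,\ldots,a_n]}$: for instance, already at depth two, $R\bigl([X_-]^{a_{n-1}}\circ R^{-1}([X_+]^{a_n})\bigr)$ has fraction $-\bigl(a_{n-1}+\tfrac{1}{a_n}\bigr)$. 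Correspondingly, the recursion you state, $T_{[a_1,\ldots,a_n]}\sim [a_1]+R^{-1}(T_{[a_2,\ldots,a_n]})$, produces the fraction $a_1-1/[a_2,\ldots,a_n]$ rather than $a_1+1/[a_2,\ldots,a_n]$; the correct recursion is $T_{[a_1,\ldots,a_n]}\sim [a_1]+\tfrac{1}{T_{[a_2,\ldots,a_n]}}$, where the inversion $\tfrac{1}{T}$ is the \emph{mirror} of the rotation, and it is precisely the alternation of $[X_+]$ with $[X_-]$ in \eqref{eqn:tanglecontinuedform} that supplies this mirror and compensates the sign flip introduced by each $R^{\pm1}$. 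You flag the sign/rotation bookkeeping as ``the main obstacle,'' but as written your inductive hypothesis is false and the general statement you propose (same fraction for both the $[X_+]$-outermost and $[X_-]$-outermost variants) cannot hold. The fix is standard---carry the signed fraction $(-1)^{k+1}[a_k,\ldots,a_n]$ at nesting level $k$, or equivalently track the mirror through the induction---but it must be done for the argument to be a proof.
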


\begin{thm}  \label{thm:rationaltanglesspan}
Given any rational tangle $T_{p/q}$, $\mathcal{A}_{-1}(T)$ is the basic span determined by the map $f^{-q/p}$.
\end{thm}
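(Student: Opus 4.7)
The strategy is to use the canonical continued fraction form from Theorem~\ref{thm:continuousfractangle} and to proceed by induction from the innermost crossing outward. Writing $p/q=[a_1,\ldots,a_n]$ with $n$ odd and $a_i\in\mathbb{N}$, and using that $R^{-1}=R$ on unoriented rational tangles, the canonical form rewrites as $T=W_1$, where $W_n:=R([X_+]^{a_n})$ and $W_k:=R\bigl([X_{\epsilon_k}]^{a_k}\circ W_{k+1}\bigr)$ for $1\le k\le n-1$, with $\epsilon_k=+$ for odd $k$ and $\epsilon_k=-$ for even $k$. The goal is to understand $\cA_{-1}(W_k)$ layer by layer.

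The computational engine consists of three facts: (i) evaluating $f_{\pm}$ at $t=-1$ directly gives $\cA_{-1}(X_\pm)=f^{\pm 1}$, and combined with the composition law $f^s\circ f^{s'}=f^{s+s'}$ (a consequence of $h^2=0$) this produces $\cA_{-1}([X_\pm]^a)=f^{\pm a}$; (ii) Proposition~\ref{prop:rotationfs} gives the rotation rule $R(f^s)=f^{-1/s}$; and (iii) $\cA_{-1}$ is a monoidal functor on $\cT^{unor}$, so it commutes with $\circ$ and $R$. Writing $\cA_{-1}(W_k)=f^{\mu_k}$, these ingredients produce the recursion
\[
\mu_n=-\tfrac{1}{a_n},\qquad \mu_k=-\frac{1}{(-1)^{k+1}a_k+\mu_{k+1}}\quad (1\le k\le n-1).
\]

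The plan is then to prove by induction the closed form $\mu_k=(-1)^k\,q_k/p_k$, where $p_k/q_k:=[a_k,a_{k+1},\ldots,a_n]$ in lowest terms. The base case follows from $p_n=a_n$, $q_n=1$, together with the parity of $n$. For the inductive step, substituting the hypothesis for $\mu_{k+1}$ into the recurrence for $\mu_k$ allows the factor $(-1)^{k+1}$ to be pulled out of the denominator, after which the classical continued-fraction recurrence $p_k=a_kp_{k+1}+q_{k+1}$, $q_k=p_{k+1}$ reassembles the expression into $(-1)^k q_k/p_k$. Setting $k=1$ yields $\cA_{-1}(T)=f^{-q/p}$, as desired. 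The one delicate point is the sign bookkeeping: the alternating $\epsilon_k=\pm1$ and the signs produced by each rotation must be tracked carefully, and it is precisely at the outermost layer that the hypothesis that $n$ is odd makes them telescope to the correct global sign. Beyond this, the proof is a mechanical unwinding of the canonical form via the two fundamental rules for the one-parameter family $f^s$: composition adds exponents, rotation negates and inverts them.
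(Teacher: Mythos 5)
Your proof is correct and follows essentially the same route as the paper: both arguments put $T$ in the canonical continued-fraction form of Theorem~\ref{thm:continuousfractangle} and then unwind it by induction using the additivity $f^{s}\circ f^{s'}=f^{s+s'}$ together with the rotation rule $f^{s}\mapsto f^{-1/s}$ of Proposition~\ref{prop:rotationfs}. The only difference is organizational --- you peel off one layer at a time with explicit $(-1)^k$ sign bookkeeping, while the paper peels off $a_1$ and $a_2$ together --- which does not change the substance of the argument.
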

\begin{proof}
Let $T_{p/q}$ be a rational tangle, with canonical continued fraction expansion $[a_1,\ldots,a_n]$. By Theorem \ref{thm:continuousfractangle}, we may assume that T is in continued fraction form, as in \eqref{eqn:tanglecontinuedform}. Let us show by induction that $A_{-1}(T_{[a_1,\ldots, a_n]})$ is the basic span $f^{-1/[a_1,\ldots,a_n]}$. For $n=1$ the assertion is clear. Assuming that the statement holds for any tangle whose continued fraction expansion has length $n-1$, we can apply Proposition \ref{prop:rotationfs} so that
$$
\cA_{-1}([X_-]^{a_2}\circ T_{[a_3,\ldots,a_n]})=f^{-[a_2,\ldots,a_n]},
$$
hence
$$
\cA_{-1}(R(T_{[a_1,\ldots,a_n]})) = \cA_{-1}([X_+]^{a_1}\circ  R([X_-]^{a_2}\circ T_{[a_3,\ldots,a_n]})) = f^{[a_1,\ldots,a_n]}.
$$
\end{proof}

\begin{cor}
The functor $\cA_{-1}$ classifies rational tangles up to isotopy.
\end{cor}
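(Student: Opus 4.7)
The plan is to combine the preceding theorem with Conway's classification of rational tangles \cite{conwayrational}, which asserts that the continued-fraction map induces a bijection between isotopy classes of rational tangles and $\mathbb{Q}\cup\{\infty\}$. Under this setup, once I establish that the assignment $p/q \mapsto f^{-q/p}$ is injective, the corollary follows immediately: two rational tangles yield equivalent basic spans under $\cA_{-1}$ if and only if they share the same fraction, which by Conway's theorem holds if and only if they are isotopic.

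For the injectivity step, I would argue as follows. Since $h$ is a nonzero nilpotent endomorphism of $\CC^2$, two linear maps $f^s = \id + sh$ and $f^{s'} = \id + s'h$ coincide if and only if $(s-s')h = 0$, forcing $s = s'$. Hence the parametrized family $\{f^s : s \in \CC\}$ consists of pairwise distinct basic spans, and $s \mapsto f^s$ is injective on $\CC$. Via the rotation identity of Proposition~\ref{prop:rotationfs}, this extends consistently to $s = \infty$ by interpreting $f^{\infty}$ as the rotated span of $f^0 = \id$; restricting to $\mathbb{Q}\cup\{\infty\}$ yields the desired injectivity of $p/q \mapsto f^{-q/p}$.

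Putting these two ingredients together completes the proof. The only point requiring mild care, although I do not expect it to constitute a genuine obstacle, is the treatment of the extremal tangles $[0]$ and $[\infty]$, whose associated fractions lie on the boundary of $\mathbb{Q}\cup\{\infty\}$; the basic span $f^{-q/p}$ at these values must be interpreted projectively inside $\Span(\Vect_\CC)$, an operation that is natural given the rotation action already used in the proof of the preceding theorem. Once that interpretation is made, the two extremal spans are verified to differ from one another and from all $f^s$ with $s \in \CC^{\ast}$, so the classification extends cleanly to the full set $\mathbb{Q}\cup\{\infty\}$.
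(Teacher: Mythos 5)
Your proposal is correct and matches the paper's intent: the corollary is stated without proof as an immediate consequence of Theorem~\ref{thm:rationaltanglesspan}, and the two ingredients you supply (injectivity of $s\mapsto f^{s}=\id+sh$ because $h\neq 0$, combined with Conway's classification of rational tangles by their fraction) are exactly the implicit argument, with the extremal tangle $[0]$ correctly handled as the non-basic rotated span of the identity. Nothing further is needed.
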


\begin{rem}
Theorem \ref{thm:rationaltanglesspan} shows that the specialization at $t=-1$ of the functor $\cA_{t}$ classifies rational tangles. In recent years, a notion of quantized rationals has been studied from an arithmetic perspective \cite{MorierGenoudOvsienko}, connected with the Jones polynomial \cite{SikoraJones}. We expect that a generalization of Theorem \ref{thm:rationaltanglesspan} to $\cA(T)$ will yield a geometric interpretation of quantized rationals, a perspective we intend to explore in future work.
\end{rem}

\subsection{Colorings} The fraction $p/q$ associated with a rational tangle can be obtained via colorings. We briefly summarize the relevant results and definitions here, referring to \cite{kauffmanrationaltangles} for a detailed exposition and complete proofs.

Given any knot $K$ represented in terms of a planar diagram, an \emph{(integral) coloring of $K$} is a function from the set of arcs of $K$ to $\mathbb{Z}$, such that Fox's coloring equations 
\begin{equation} \label{eqn:Foxcoloreq}
2a=b+c,
\end{equation}
are satisfied at any crossing, where $a$ denotes the color of the overcrossing arc and $b,c$ are the colors of the two undercrossing arcs.

\begin{figure}[h]
\includegraphics[height=5cm]{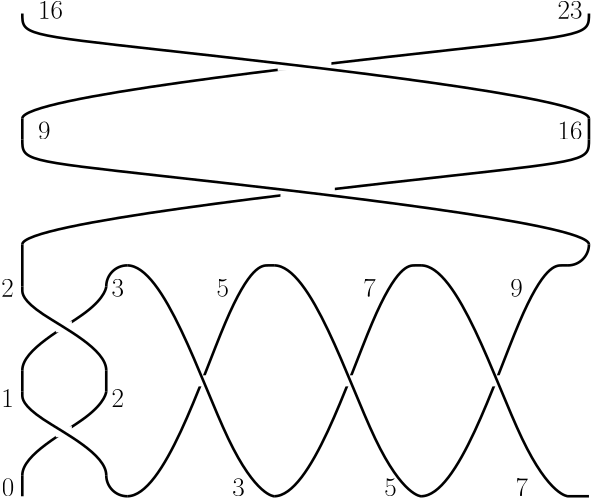}
\caption{An integral coloring of the tangle $7/16=[0;2,3,2]$} \label{fig:Fox322}
\end{figure}

Unlike knots, where (non-trivial) colorings must sometimes be reduced to $\mathbb{Z}_n$ in order to exist, rational tangles are integrally colorable: one can choose colors for the initial strands of any rational knot (where the tangle starts to wind), so that the coloring propagates to the rest of the tangle.

To any coloring of a rational tangle $T$, as a 2-tangle, we can associate its \emph{color matrix},
$$
M(T)=\begin{pmatrix} a & b \\c & d \end{pmatrix},
$$
where $a,b,c,d \in \mathbb{Z}$ are the colors of the NW, NE, SW and SE external strands of $T$, respectively. For example, the color matrix of the coloring displayed in Figure \ref{fig:Fox322} is $M=\left( \begin{smallmatrix}
    16 & 23 \\ 0 & 7
\end{smallmatrix} \right)$.

We also recall the following properties of any color matrix of $T$:
\begin{itemize}
    \item $a-b=c-d$ (defined in \cite{kauffmanrationaltangles} as \emph{the diagonal sum rule})
    \item For any $k,l\in \ZZ$, $\left( \begin{smallmatrix} ka+l & kb+l \\ kc+l & kd+l\end{smallmatrix} \right) $ is also a coloring matrix for a rational tangle $T$.
\end{itemize}
Thus any coloring of $T$ defines a coloring matrix $M(T)$. Conversely, by the propagation property, any coloring matrix $M(T)$ uniquely recovers the full coloring on the tangle $T$. For this reason, we shall occasionally refer to $M(T)$ itself as a coloring of $T$. The coloring invariant that can be associated to any rational tangle $T$ is the rational number
$$
f(T)=\frac{b-a}{b-d} \in \mathbb{Q},
$$
which classifies the rational tangle $T$ up to isotopy. It coincides with the ``topological'' fraction that can be associated to any rational tangle from its continued fraction form. 

Colorings are classically related to Alexander quandles \cite{ColoringsKauffman, cattabriga}, since Fox coloring equation \eqref{eqn:Foxcoloreq} is the specialization at $t=-1$ of the Alexander quandle relation \eqref{eqn:Alexquandlerel} at every crossing.
The goal of this section is to provide a complex-geometrical interpretation in terms of spans of the fraction $p/q$ of a rational tangle, relating colorings and the functor $\cA_{-1}$. We first restate and prove some of the previous properties and statements in terms of representations, quandles and the functor $\cA_{-1}$.

In what follows, $T$ is assumed to be a rational tangle, and $\partial T=\epsilon \sqcup \varepsilon$.

First of all, $\R_{\AQ{-1}}(\epsilon)\cong R_{\AQ{-1}}(\varepsilon)\cong \CC^2$, as $Q_{\epsilon}\cong Q_{\varepsilon}$ is the free quandle in two generators. $\R_{\AQ{-1}}(T)$ is also two-dimensional for any rational tangle $T$:
\begin{prop}\label{prop:2dimcolorings}
For any rational tangle $T$, $\R_{\AQ{-1}}(T)\cong \CC^2$
\end{prop}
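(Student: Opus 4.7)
The plan is to deduce the statement directly from Theorem \ref{thm:rationaltanglesspan}. That theorem asserts that for any rational tangle $T=T_{p/q}$, the image $\cA_{-1}(T)$ is equivalent in $\Span(\Vect_\CC)$ to the basic span
$$
\xymatrix{
& \CC^2 \ar[ld]_{\id} \ar[rd]^{f^{-q/p}} & \\
\CC^2 & & \CC^2
}
$$
whose middle object is $\CC^2$. Since the functor $\cA_{-1}$ is constructed so that the middle object of $\cA_{-1}(T)$ is by definition $\R_{\AQ{-1}}(T)$, and equivalence of spans is implemented precisely by an isomorphism between middle objects, the identification $\R_{\AQ{-1}}(T)\cong \CC^2$ follows at once.

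For a self-contained argument that does not invoke the full classification of $\cA_{-1}$ on rational tangles, I would instead induct on the length $n$ of the canonical continued fraction expansion $[a_1,\ldots,a_n]$ provided by Theorem \ref{thm:continuousfractangle}. The base case $n=1$ is covered by Corollary \ref{cor:powersandrotat-1}, which presents $\cA_{-1}([\pm n])$ and $\cA_{-1}(\pm 1/[n])$ as basic spans with middle $\CC^2$. For the inductive step two ingredients suffice: (i) Lemma \ref{lem:rotation2tangle} together with Proposition \ref{prop:rotationfs} show that rotating a basic span of the form $(\id,f^s)$ yields another basic span with middle $\CC^2$; and (ii) the composition in $\Span(\Vect_\CC)$ of two basic spans $\CC^2 \stackrel{\id}{\longleftarrow}\CC^2\stackrel{g_1}{\longrightarrow}\CC^2$ and $\CC^2\stackrel{\id}{\longleftarrow}\CC^2\stackrel{g_2}{\longrightarrow}\CC^2$ is again a basic span $\CC^2\stackrel{\id}{\longleftarrow}\CC^2\stackrel{g_2\circ g_1}{\longrightarrow}\CC^2$, since the relevant fibre product collapses to $\CC^2$ along the identity leg. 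Applying these observations to the decomposition \eqref{eqn:tanglecontinuedform} preserves the middle dimension at every step.

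The only delicate point I expect is the bookkeeping that ensures no intermediate span in the canonical decomposition acquires a kernel or a non-surjective leg which would strictly reduce the middle dimension; this is precisely what the identity-leg form of the basic spans $(\id,f^s)$ guarantees, and it is already embedded in the proof of Theorem \ref{thm:rationaltanglesspan}. Thus the proposition is essentially a corollary of that theorem, and the task reduces to transporting information about the middle object of $\cA_{-1}(T)$ back to the quandle representation variety $\R_{\AQ{-1}}(T)$.
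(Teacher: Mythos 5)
Your argument is correct and non-circular given the paper's ordering (Theorem \ref{thm:rationaltanglesspan} precedes this proposition and its proof does not use it), but it takes a genuinely different route. The paper's proof is a one-line topological observation: choosing an alternating continued-fraction diagram for $T$, the inclusion of the complement of one of the two boundary configurations (either $\epsilon^c\hookrightarrow T^c$ or, after rotating, $(\epsilon')^c\hookrightarrow R(T)^c$) is a homotopy equivalence, so $\R_{\AQ{-1}}(T)\cong\R_{\AQ{-1}}(\epsilon)\cong\CC^2$ because the boundary quandle is free on two generators. That argument is lighter (no induction, no classification) and actually proves the stronger statement $\R_{\AQ{t}}(T)\cong\CC^2$ for every $t\in\CC^{\ast}$, since the homotopy equivalence is independent of the specialization. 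Your route instead imports the full strength of Theorem \ref{thm:rationaltanglesspan} and reads off the middle object; the one load-bearing step you should make explicit is that the middle object of the composed span $\cA_{-1}(T)$ is naturally identified with $\R_{\AQ{-1}}(Q_T)$ — this is exactly the functoriality assertion of Theorem \ref{thm-intro:TQFT-fibre} (the fibre products arising in the canonical decomposition agree with the representation variety of the glued tangle), not a tautology of the span calculus. Granting that, equivalence of spans is witnessed by an isomorphism of apices, so the conclusion follows; your self-contained inductive variant is likewise sound, since every intermediate span in \eqref{eqn:tanglecontinuedform} has an invertible leg and the relevant pullbacks collapse. In short: both proofs work; the paper's buys generality in $t$ and brevity, yours exhibits the proposition as a formal corollary of the classification already established.
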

\begin{proof}
Choosing an alternating continued fraction planar diagram for $T$, either the inclusion $i:\epsilon^c\hookrightarrow T^c$ or $i: (\epsilon')^ c\hookrightarrow R(T)^c$ is a homotopy equivalence, where $R(T)=\epsilon'\sqcup \varepsilon'$ is the rotation of the tangle $T$. Hence $\R_{\AQ{-1}}(T)\cong \R_{\AQ{-1}}(\epsilon)\cong \CC^2$ or $\R_{\AQ{-1}}(T)\cong \R_{\AQ{-1}}(R(T))\cong \R_{\AQ{-1}}(\epsilon')\cong \CC^2$.
\end{proof}

\begin{rem}
Taking into account the trivial coloring $\left( \begin{smallmatrix} 1 & 1 \\ 1 & 1 
\end{smallmatrix} \right) $ that corresponds to the trivial representation, the space of colorings is generated by any single non-trivial coloring. Moreover, any coloring on any two consecutive points $\tau:=\lbrace p_i,p_j \rbrace $ of $\partial T$ propagates to a coloring of $T$ if and only if the inclusion of $\tau^c$ inside $T^c$ is a (weak) homotopy equivalence, which implies that $\R_{\AQ{1}}(\tau)\cong \R_{\AQ{1}}(T)\cong \CC^2$.
\end{rem}

The \emph{diagonal sum rule} property of colorings of rational tangles can also be interpreted in terms of representations, as the next proposition shows.

\begin{prop}[Diagonal sum rule] \label{prop:diagonalsumrule} Let $T$ be a rational tangle and let $\cA_{-1}(T)$ be the span
$$
\xymatrix{
	& \CC^{2} \ar[ld]_{f} \ar[rd]^{g} & \\
	\CC^{2} & & \CC^{2}
} 
$$
Then, for any $v\in\CC^2$, if we write $f(v)=(a,b)\in \CC^2$, $g(v)=(c,d)\in \CC^2$,
$$
a-b=c-d
$$
\end{prop}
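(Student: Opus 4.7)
The plan is to reduce the statement to the explicit description of $\cA_{-1}(T)$ provided by Theorem \ref{thm:rationaltanglesspan}. That theorem shows that for any rational tangle $T=T_{p/q}$, the span $\cA_{-1}(T)$ is the basic span determined by $f^{-q/p}=\id-\tfrac{q}{p}h$. In particular, in the diagram
\[
\xymatrix{
 & \CC^{2}\ar[ld]_{f}\ar[rd]^{g} & \\
 \CC^{2} & & \CC^{2}
}
\]
we can take $f=\id_{\CC^{2}}$ and $g=\id-\tfrac{q}{p}h$, where $h(a_1,a_2)=(a_1-a_2,a_1-a_2)$.

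The key observation is that the image of $h$ lies in the diagonal subspace $D=\{(x,x):x\in \CC\}\subset \CC^{2}$, which is precisely the kernel of the linear functional $\delta:\CC^{2}\to \CC$, $\delta(x_1,x_2)=x_1-x_2$. Hence $\delta\circ h=0$, so for any $v=(v_1,v_2)\in \CC^{2}$ we have
\[
\delta(g(v))=\delta(v)-\tfrac{q}{p}\,\delta(h(v))=\delta(v),
\]
and trivially $\delta(f(v))=\delta(v)$ since $f=\id$. Writing $f(v)=(a,b)$ and $g(v)=(c,d)$, these two equalities read $a-b=v_1-v_2=c-d$, which is the diagonal sum rule.

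The only subtle point is that Theorem \ref{thm:rationaltanglesspan} provides $\cA_{-1}(T)$ up to equivalence of spans, so strictly speaking one must observe that the statement $a-b=c-d$ is invariant under the span equivalence relation: replacing $(f,g)$ by $(f\circ\varphi,g\circ\varphi)$ for an isomorphism $\varphi$ only reparametrizes $v$, so the identity remains valid. Thus no serious obstacle arises, and I expect the proof to consist essentially of the two lines above, once the theorem is invoked.
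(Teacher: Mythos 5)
Your argument is correct and non-circular (Theorem \ref{thm:rationaltanglesspan} is proved before, and independently of, the diagonal sum rule), but it takes a genuinely different route from the paper. The paper's proof is intrinsic and representation-theoretic: a coloring $v$ is lifted via the adjunction \eqref{eqn:adjunctiongrqdl} to a representation $\rho:\pi_1(T^c)\to\aglC$ over $t=-1$; the product of the images of the two top meridians is the unipotent matrix $\left(\begin{smallmatrix}1 & 0\\ a-b & 1\end{smallmatrix}\right)$, the product for the two bottom meridians is $\left(\begin{smallmatrix}1 & 0\\ c-d & 1\end{smallmatrix}\right)$, and these coincide because the concatenated boundary loops are homotopic in $T^c$. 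That argument never invokes rationality and in fact proves the identity for an arbitrary $2$-tangle. Your proof instead treats the proposition as a corollary of the full classification $\cA_{-1}(T_{p/q})=f^{-q/p}$, reducing everything to the observation $\delta\circ h=0$; this is shorter once the theorem is in hand, but it is logically heavier (it front-loads the hardest result of the section) and less general. Your remark on invariance under span equivalence is exactly the right thing to check and is handled correctly. One small gap to patch: Theorem \ref{thm:rationaltanglesspan} describes $\cA_{-1}(T)$ as a \emph{basic} span only when $p\neq 0$; for the trivial tangle $[0]$ the left leg of $\cA_{-1}(T)$ is $v\mapsto(v_1,v_1)$, which is not an isomorphism, so the formula $f^{-q/p}$ does not literally apply. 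The identity $a-b=c-d$ still holds there by direct inspection (both sides vanish), but you should either treat this case separately or note that it is excluded.
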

\begin{proof} First, let us write $\pi_1(\varepsilon^c)=\langle \alpha_1,\alpha_2\rangle \cong \mathbf{F}_2$ and $\pi_1(\epsilon^c)=\langle \beta_1,\beta_2\rangle \cong \mathbf{F}_2$, where each generator is a simple loop around each of the punctures. Recalling the adjunction \eqref{eqn:adjunctiongrqdl}, choosing $v\in \CC^2\cong \R_{\AQ{-1}}(Q_T)$ is equivalent to choosing $\rho\in \Hom(\pi_1(T^c), \aglC)$ such that $\pi_K(\rho)=-1$. Writing $a,b,c,d$ for the off-diagonal terms of the images by $\rho$ in $\aglC$ of the four generators $\alpha_1,\alpha_2,\beta_1,\beta_2$, we get that
$$
\begin{pmatrix} 1 & 0 \\ a-b & 1 \end{pmatrix} = \rho(i(\alpha_1))\rho(i(\alpha_2))=\rho(i(\alpha_1\ast \alpha_2))=\rho(i(\beta_1\ast \beta_2))=\rho(i(\beta_1))\rho(i(\beta_2))=\begin{pmatrix} 1 & 0 \\ c-d & 1 \end{pmatrix},
$$
since the inclusions of the compositions $\alpha_1\ast\alpha_2$ and $\beta_1\ast \beta_2$ are homotopic loops in $T^c$.
\end{proof}

The following proposition clarifies its relation with the functor $\cA_{-1}(T)$.
\begin{prop}
Any non-trivial coloring of a rational tangle $T$ is completely determined by the span $\cA_{-1}(T)$
$$
\quad \xymatrix{
	& \CC^{2} \ar[ld]_{f} \ar[rd]^{g} & \\
	\CC^{2} & & \CC^{2}
} 
$$
and viceversa. Moreover, the matrix
$$
M_v= \left(\begin{array}{c} g(v) \\ \hline f(v) \end{array} \right)\quad v\in \CC^2
$$
is a coloring matrix for $T$.
\end{prop}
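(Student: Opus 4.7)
The strategy is to combine three ingredients already established in the excerpt: first, Fox colorings coincide with representations into the Alexander quandle $\AQ{-1}$, since the coloring equation \eqref{eqn:Foxcoloreq} is the specialization at $t=-1$ of the relation \eqref{eqn:Alexquandlerel}; second, the apex of the span $\cA_{-1}(T)$ is the representation variety $\R_{\AQ{-1}}(Q_T)$, which is isomorphic to $\CC^2$ by Proposition \ref{prop:2dimcolorings}; third, the legs $f$ and $g$ are the restriction maps induced by inclusion of the two boundary components of $T$ into $T^c$.

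Under the identification $\R_{\AQ{-1}}(Q_T)\cong \CC^2$, each vector $v\in \CC^2$ parametrizes a quandle representation $\rho_v\colon Q_T \to \AQ{-1}$, and hence a coloring of the rational tangle $T$. The trivial coloring corresponds to the one-dimensional subspace of constant representations; any $v$ outside this subspace yields a non-trivial coloring, and distinct vectors yield distinct colorings. In particular, knowing a single non-trivial coloring is enough to recover the entire span up to equivalence, since it generates the apex together with the trivial one.

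To identify the color matrix I would trace through the boundary restriction maps. Following the conventions in Section \ref{sec:agl1repsandAlexanderinvariants}, the source $\epsilon$ of $T$ corresponds to the two bottom endpoints of the planar diagram, so $f(v) \in \CC^2$ records the colors $(c,d)$ assigned to the SW and SE arcs, while the target $\varepsilon$ corresponds to the top endpoints, so $g(v)$ records the colors $(a,b)$ assigned to the NW and NE arcs. Stacking these as top row $g(v)=(a,b)$ and bottom row $f(v)=(c,d)$ yields
$$
M_v=\begin{pmatrix} a & b \\ c & d \end{pmatrix},
$$
which by construction satisfies the diagonal sum rule $a-b=c-d$ of Proposition \ref{prop:diagonalsumrule} and is therefore a valid color matrix for $T$. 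Conversely, given any color matrix of $T$, propagation of colors recovers the representation $\rho_v$ and hence the vector $v$, establishing the bijection.

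The main obstacle is purely bookkeeping: verifying that the standard generators chosen for $Q_\epsilon$ and $Q_\varepsilon$ in Section \ref{sec:agl1repsandAlexanderinvariants} correspond, under the identification of colorings with quandle representations into $\AQ{-1}$, to the arcs labeled SW, SE (for $f$) and NW, NE (for $g$) in the planar diagram used to define $M(T)$. This amounts to matching the orientations depicted in Figure \ref{fig:pi1generators} with the labeling conventions adopted for rational tangle colorings, and no new computation is required.
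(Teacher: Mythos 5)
Your proposal is correct and follows essentially the same route as the paper: identify the apex of $\cA_{-1}(T)$ with $\R_{\AQ{-1}}(Q_T)\cong\CC^2$ via Proposition \ref{prop:2dimcolorings}, observe that the trivial coloring spans the line through $(1,1)$ with $f(1,1)=g(1,1)=(1,1)$, and note that any $v$ off that line both determines and is determined by its boundary colors $g(v),f(v)$, so that $\{(1,1),v\}$ being a basis recovers the whole span. The paper's proof is just a terser version of the same argument, and your extra care about matching the NW/NE/SW/SE labels to the legs $g$ and $f$ is exactly the bookkeeping it leaves implicit.
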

\begin{proof} 
Proposition \ref{prop:2dimcolorings} yields that the three vector spaces that define the span $\cA_{-1}(T)$ are isomorphic to $\mathbb{C}^2$. Moreover, Fox's equations imply that $f(1,1)=g(1,1)=(1,1)\in \CC^2$, as the trivial coloring in $T$ yields the trivial coloring on $\partial T$. Any non-trivial coloring of $T$ amounts to choosing $v\in \R_{\AQ{-1}}(T), v\not\in \langle (1,1) \rangle$, which is uniquely determined by the induced coloring on the endpoints of the tangle, that is, $\left( \begin{array}{c c} a & b \\c & d \end{array} \right):=\left( \begin{array}{c } g(v) \\ \hline f(v) \end{array} \right)$ and viceversa. This information fully characterizes the span $\cA_{-1}(T)$ and the linear maps $f,g:\CC^2\rightarrow\CC^2$, as $\mathcal{B}=\lbrace (1,1),v\rbrace $ is a basis for $\CC^2$.
\end{proof}
As a consequence, the span $\cA_{-1}(T)$ can be regarded as a monomorphism $\phi_T:=(f,g):\CC^2 \longrightarrow \CC^4$ that satisfies that $\phi_T(1,1)=(1,1,1,1)$. The image of $\phi_T$ is a plane in $\CC^4$ generated by $v_0:=(1,1,1,1)$ and any non-trivial coloring $v:=(a,b,c,d)$ on $\partial{T}$. Note that the choice of any equivalent span to $\cA_{-1}(T)$ corresponds to a different choice of basis for $\R_{\AQ{-1}}(T)\cong \CC^2$, but the plane defined by the image of $ \phi_T$ remains invariant. That is,
\begin{cor}\label{cor:spaningrassmannian}
$\cA_{-1}(T)$ uniquely defines a point $P_T\in \Gr(2,4)$
\end{cor}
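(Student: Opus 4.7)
The plan is to show two things: first, that the map $\phi_T=(f,g)\colon \CC^2\to\CC^4$ induced by the span $\cA_{-1}(T)$ is injective and hence has two-dimensional image, so that this image naturally defines a point of $\Gr(2,4)$; and second, that this point is independent of the choice of representative for the equivalence class of the span.

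For the first step, I would invoke Proposition \ref{prop:2dimcolorings} together with the diagonal sum rule (Proposition \ref{prop:diagonalsumrule}) and the observation in the preceding paragraph that $\phi_T(1,1)=(1,1,1,1)$ and that any non-trivial coloring of $T$ produces a vector $v\in \CC^2$ mapped to $(a,b,c,d)$ with $(a,b,c,d)\notin \langle (1,1,1,1)\rangle$. Since $\mathcal{B}=\{(1,1),v\}$ is a basis of $\CC^2$ and $\phi_T$ sends it to two linearly independent vectors in $\CC^4$, the map $\phi_T$ is injective, so $\operatorname{Im}(\phi_T)$ is a two-dimensional linear subspace of $\CC^4$, i.e.\ a point of $\Gr(2,4)$.

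For the second step, I would use the description of equivalence of spans recalled in Section \ref{sec:agl1repsandAlexanderinvariants}: two spans $(Z,f,g)$ and $(Z',f',g')$ are equivalent iff there is an isomorphism $\varphi\colon Z\to Z'$ with $f=f'\circ \varphi$, $g=g'\circ \varphi$. In our setting, this means that any representative of $\cA_{-1}(T)$ differs from a fixed one by composing on the right with a linear automorphism $\varphi$ of $\CC^2$. Therefore the induced maps $\phi_T=(f,g)$ and $\phi_T'=(f',g')$ satisfy $\phi_T=\phi_T'\circ \varphi$, and since $\varphi$ is an isomorphism, $\operatorname{Im}(\phi_T)=\operatorname{Im}(\phi_T')$. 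The point $P_T\in \Gr(2,4)$ is thus well-defined.

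The result then follows by setting $P_T:=\operatorname{Im}(\phi_T)$. There is no serious obstacle here: the argument is essentially a bookkeeping of what has already been established, and the only subtlety is to make explicit that the Grassmannian encodes precisely the quotient by the change-of-basis ambiguity in the representative of the span, so that the passage from the equivalence class of spans to a point of $\Gr(2,4)$ is canonical.
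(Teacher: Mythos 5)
Your proposal is correct and follows essentially the same route as the paper, which treats the corollary as a direct consequence of the preceding discussion: injectivity of $\phi_T=(f,g)$ because the basis $\{(1,1),v\}$ of $\R_{\AQ{-1}}(T)\cong\CC^2$ maps to the independent vectors $(1,1,1,1)$ and a non-trivial boundary coloring, and invariance of $\operatorname{Im}(\phi_T)$ under span equivalence since equivalent representatives differ by precomposition with an automorphism of $\CC^2$. The only difference is that you spell out explicitly what the paper leaves as a one-line remark before the corollary.
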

Let us recall that the complex grassmannian $\Gr(2,4)$ can be realized as a complex algebraic subvariety of $\mathbb{P}^5(\CC)$ via the Pl{\"u}cker embedding, that maps any $P=\langle u,v\rangle \in \Gr(2,4)$ to the line generated by the bivector $u\wedge v$ in $\bigwedge^2 \mathbb{C}^4$, a point in $\mathbb{P}(\bigwedge^2 \mathbb{C}^4) \cong \mathbb{P}^5(\mathbb{C})$. Its image in $\mathbb{P}^5(\mathbb{C})$ is the Pl{\"u}cker quadric $\mathcal{C}$ defined by the equation
$$
p_{12}p_{34}-p_{13}p_{24}+p_{14}p_{23}=0,
$$
where $p_{ij}=\begin{vmatrix} v_i & v_j \\w_i & w_j \end{vmatrix}$ are the standard coordinates of $\bigwedge^2 \CC^4$. 

We are interested in describing the set
$$
\cA_{-1}^{rat}:=\lbrace \cA_{-1}(T) : T \text{ is a rational tangle} \rbrace,
$$ which by Corollary \ref{cor:spaningrassmannian} can be understood as the image (via colorings) of all rational tangles inside $G(2,4)$, thanks to the functor $\cA_{-1}$. First of all, let us consider the Schubert variety $\sigma \subset \mathcal{C}$ defined as
$$
\sigma = \lbrace  P\in \mathcal{C} \mid \dim( \langle v_0\rangle \cap P ) \geq 1\rbrace= \lbrace \langle v_0,v\rangle \mid v\in \CC^4 \rbrace \subset \mathcal{C} \subset \Gr(2,4),
$$
which consists of planes in $\CC^4$ that contain the line generated by $v_0$. We note that in this case $p_{ij}= \begin{vmatrix}
    1 & 1 \\ v_i & v_j
\end{vmatrix}=v_j-v_i$, so that $\sigma$ is defined by the three linear equations on $\Gr(2,4)$
\begin{align}
p_{12}-p_{13}+p_{23} & = 0, \label{eqn:plucker1} \\
p_{12}-p_{14}+p_{24} & = 0, \label{eqn:plucker2}\\
p_{13}-p_{14}+p_{34} & =0.\label{eqn:plucker3}
\end{align}
Thus $\sigma \cong \mathbb{P}^2(\CC)$, parametrized by $[p_{12}:p_{13}:p_{14}]$.

Furthermore, Proposition \ref{prop:diagonalsumrule} establishes that any point $P_T$ in $G(2,4)$, which arises from a rational tangle $T$, satisfies the additional equation $v_1-v_2=v_3-v_4$. This imposes the additional linear equation $p_{12}=p_{34}$ on $\sigma$, equivalent to
\begin{equation} \label{eqn:plucker4}
p_{12}+p_{13}-p_{14}=0.
\end{equation}
We summarize this discussion in the following theorem.
\begin{thm}
The set $\cA_{-1}^{rat}:=\lbrace \cA_{-1}(T) \mid T \text{ is a rational tangle} \rbrace$ defines a (smooth) rational curve inside the complex grassmannian $\Gr(2,4)$. 
The fraction associated to any coloring $(v_1,v_2,v_3,v_4)$ of a rational tangle $T$, 
$$
\frac{p}{q}=\frac{v_4-v_3}{v_4-v_2},
$$
is the rational slope of the point $\cA_{-1}(T)$ inside $\cA_{-1}^{rat}\cong \mathbb{P}^1(\CC)$.
\end{thm}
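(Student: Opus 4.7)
The plan is to intersect $\Gr(2,4)$ with a chain of linear conditions and identify the resulting curve with $\mathbb{P}^1(\CC)$ via slope coordinates on Plücker space. First, since $f(1,1)=g(1,1)=(1,1)$ for every rational tangle, the image plane $P_T$ always contains $v_0=(1,1,1,1)$, so $\cA_{-1}^{rat}\subset\sigma$. Under the parametrization $\sigma\cong\mathbb{P}^2(\CC)$ by $[p_{12}:p_{13}:p_{14}]$, Proposition \ref{prop:diagonalsumrule} translates to the single additional equation $p_{12}=p_{34}$; combined with \eqref{eqn:plucker3}, this becomes $p_{12}+p_{13}-p_{14}=0$, cutting out a projective line $L\cong\mathbb{P}^1(\CC)$ inside $\sigma$. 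As a linear subvariety of $\mathbb{P}^2(\CC)\cong\sigma\subset\Gr(2,4)$, $L$ is automatically a smooth rational curve.

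To show that $T_{p/q}\mapsto P_{T_{p/q}}$ lands on $L$ with the claimed slope, I would apply Theorem \ref{thm:rationaltanglesspan}: $\cA_{-1}(T_{p/q})$ is the basic span determined by $f^{-q/p}=\id-\tfrac{q}{p}h$. Evaluating $\phi_T$ on the basis $\{(1,1),(1,0)\}$ of $\R_{\AQ{-1}}(T)\cong\CC^2$ produces a plane generated by $v_0$ and $(1,0,1-q/p,-q/p)$, whose Plücker coordinates are readily computed to be $[p_{12}:p_{13}:p_{14}]=[p:q:p+q]\in L$. Hence the slope coordinate $p_{12}/p_{13}$ equals $p/q$, and as $p/q$ varies over $\mathbb{Q}\cup\{\infty\}$ the image sweeps out the rational points of $L\cong\mathbb{P}^1(\CC)$.

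For the reinterpretation in terms of colorings, I would substitute a non-trivial coloring vector $v=(v_1,v_2,v_3,v_4)$ spanning $P_T$ together with $v_0$: the diagonal sum rule $v_2-v_1=v_4-v_3$ yields $p_{12}=v_4-v_3$, and the defining equation of $L$ gives $p_{13}=p_{14}-p_{12}=v_4-v_2$, so the slope $p_{12}/p_{13}$ equals $(v_4-v_3)/(v_4-v_2)$, as claimed. The main bookkeeping hurdle is coordinating three different orderings—the coloring matrix entries $(a,b,c,d)$, the vector ordering $(f(v),g(v))\in\CC^4$, and the Plücker indices—since the slope formula is sign-sensitive; once those are aligned, all the equations defining $L$ and the slope are linear. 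A minor conceptual subtlety is that $\cA_{-1}^{rat}$, as a set, contains only the rational-slope points of $L$, so the phrasing ``defines a smooth rational curve'' should be understood as identifying $L$ with the Zariski closure of $\cA_{-1}^{rat}$ inside $\Gr(2,4)$.
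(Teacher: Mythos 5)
Your proof is correct and follows essentially the same route as the paper: restricting to the Schubert plane $\sigma\cong\mathbb{P}^2(\CC)$ of planes through $v_0$, imposing the diagonal-sum-rule condition $p_{12}=p_{34}$ to cut out a projective line, and reading off the fraction as the slope $p_{12}/p_{13}$. Your two additions --- the explicit computation via Theorem \ref{thm:rationaltanglesspan} that $T_{p/q}$ lands at the Pl\"ucker point $[p:q:p+q]$, and the observation that $\cA_{-1}^{rat}$ as a set consists only of the rational-slope points, so that ``defines a smooth rational curve'' should be read as identifying its Zariski closure --- are both sound refinements of bookkeeping the paper leaves implicit.
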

\begin{proof}
Equations \eqref{eqn:plucker1}-\eqref{eqn:plucker4} define $\cA_{-1}^{rat}$ as a complex line inside $\mathcal{C}\subset \mathbb{P}^5(\CC)$, that can be explictly described using Pl{\"u}cker coordinates via the embedding
\begin{align*}
\Phi \colon  \mathbb{P}^1(\mathbb{C}) & \longrightarrow \mathbb{P}^5(\mathbb{C}) \\
[p_{12}:p_{13}] & \longmapsto [p_{12}:p_{13}:p_{12}+p_{13}:-p_{12}+p_{13}:p_{13}:p_{12}]
\end{align*}
Besides, note that these equations also imply that the rational number
$$
\frac{p}{q}:=\frac{v_4-v_3}{v_4-v_2}=\frac{p_{34}}{p_{24}}=\frac{p_{12}}{p_{13}},
$$
is simply the (rational) slope of the point $[p_{12}:p_{13}]\in \mathbb{P}^1(\CC)$, regarded as a line inside the affine chart $\lbrace p_{13}\neq 0\rbrace \cong \CC$.
\end{proof}
\bibliographystyle{abbrv}
\bibliography{AlexanderRational}
\end{document}